\definecolor{dmagenta}{rgb}{.4,.1,.5}
\definecolor{dblue}{rgb}{.0,.0,.5}
\definecolor{mblue}{rgb}{.0,.0,.8}
\definecolor{ddblue}{rgb}{.0,.0,.4}
\definecolor{dred}{rgb}{.6,.0,.0}
\definecolor{dgreen}{rgb}{.0,.5,.0}
\definecolor{Eeom}{rgb}{.0,.0,.5}
\newtheorem{lemma}{Lemma}[section]
\newtheorem{theorem}{Theorem}[section]
\newtheorem{corollary}{Corollary}[section]
\theoremstyle{definition}
\newtheorem{definition}{Definition}[section]
\theoremstyle{remark}
\newtheorem{remark}{Remark}[section]
\numberwithin{equation}{section}
\crefname{section}{Section}{Sections}
\crefname{subsection}{Subsection}{Subsections}
\crefname{condition}{Condition}{Conditions}
\crefname{hypothesis}{Hypothesis}{Conditions}
\crefname{assumption}{Assumption}{Assumptions}
\crefname{lemma}{Lemma}{Lemmas}
\crefname{claim}{Claim}{Claims}
\Crefname{figure}{Figure}{Figures}
\DeclareRobustCommand\widecheck[1]{{\mathpalette\@widecheck{#1}}}
\def\@widecheck#1#2{%
    \setbox\z@\hbox{\m@th$#1#2$}%
    \setbox\tw@\hbox{\m@th$#1%
       \widehat{%
          \vrule\@width\z@\@height\ht\z@
          \vrule\@height\z@\@width\wd\z@}$}%
    \dp\tw@-\ht\z@
    \@tempdima\ht\z@ \advance\@tempdima2\ht\tw@ \divide\@tempdima\thr@@
    \setbox\tw@\hbox{%
       \raise\@tempdima\hbox{\scalebox{1}[-1]{\lower\@tempdima\box
\tw@}}}%
    {\ooalign{\box\tw@ \cr \box\z@}}}
\DeclareMathOperator{\Exp}{\mathbb{E}} 
\DeclareMathOperator{\Prob}{\mathbb{P}} 
\newcommand{\D}{\mathrm{d}}          
\newcommand{\RR}{\mathbb{R}}         
\newcommand{\Rd}{{\mathbb{R}^d}}       
\newcommand{\Ind}{\mathds{1}}            
\newcommand{\Cc}{C}                  
\newcommand{\uuptau}{\Breve{\uptau}}
\newcommand{\eps}{\varepsilon}
\newcommand{\cE}{\mathcal{E}}
\newcommand{\cF}{\mathcal{F}}
\newcommand{\sF}{\mathfrak{F}}    
\newcommand{\sM}{\mathscr{M}}     
\newcommand{\cN}{\mathcal{N}} 
\newcommand{\sR}{\mathscr{R}}     
\newcommand{\sV}{\mathscr{V}}    
\newcommand{\cX}{\mathcal{X}}    
\newcommand{\abs}[1]{\lvert#1\rvert}
\newcommand{\norm}[1]{\lVert#1\rVert}
\DeclareMathOperator*{\esssup}{ess\,sup}
\DeclareMathOperator*{\essinf}{ess\,inf}
\DeclareMathOperator{\cp}{Cap}
\DeclareMathOperator{\ML}{\mathrm{ML}}
\begin{document}

\title[]{Faber-Krahn type inequalities and uniqueness of positive solutions on metric measure spaces}

\author{Anup Biswas}
\address{ Department of Mathematics, Indian Institute of Science Education and Research, Dr. Homi Bhabha Road, Pashan, Pune 411008, India}
\email{anup@iiserpune.ac.in}

\author{Janna Lierl}
\address{Department of Mathematics, University of Connecticut, 341 Mansfield Road, Storrs, CT 06269, USA}
\email{janna.lierl@uconn.edu}

\begin{abstract}
We consider a general class of metric measure spaces equipped with a regular Dirichlet form and then provide a lower bound on the hitting time
probabilities of the associated Hunt process. Using these estimates we establish (i) a generalization of
the classical Lieb's inequality on metric measure spaces and (ii) uniqueness of nonnegative super-solutions on metric measure spaces. Finally, using heat-kernel
estimates we generalize the \textit{local Faber-Krahn} inequality recently obtained in \cite{LS18}.
\end{abstract}

\keywords{Lieb's inequality, positive supersolutions, principal eigenvalue, Keller's inequality, moment estimate for eigenvalues, nodal domain, Liouville theorem}

\subjclass[2000]{Primary: 35J10, 35K08 Secondary: 35J08, 47D07, 81Q35}

\maketitle

\tableofcontents

\section{Introduction}

In this article we are concerned with three problems, namely (a) generalized Lieb's inequality (b) uniqueness of non-negative super-solutions and (c) local Faber-Krahn estimate, of  
seemingly different flavor but related by heat kernel estimates and  hitting time estimates. The central theme of this article is to showcase how probabilistic method can be applied
to address the above mentioned analytic questions in a very general setting of metric measure spaces.

In an influential work \cite{L83} Lieb showed that given any $\varepsilon\in (0,1)$ there exists $r_\eps$ such that for any domain $D\subset\Rd$, with $\lambda_D$ being its Dirichlet principal eigenvalue in $D$ for the
Laplacian, it holds that
$$\abs{D\cap B(x, r_\eps \lambda_D^{-\nicefrac{1}{2}})}\geq (1-\eps) \abs{B(x, r_\eps \lambda_D^{-\nicefrac{1}{2}})},$$
for some $x\in\Rd$ where $B(x, r)$ denotes the ball of radius $r$ around $x$. The above inequality can be seen as a finer version of the classical Faber-Krahn inequality. This inequality has been
extended in several directions. For instance, \cite{RS18} extends it for Schr\"{o}dinger operators in $\Rd$, \cite{GM16} establishes this on smooth Riemannian manifolds and \cite{B17} obtains an analogous version of this inequality for
fractional Laplacian. In a similar direction we also cite \cite{DCH, DEH} which studies Faber-Krahn type inequalities for the Schr\"{o}dinger operators in $\Rd$ involving singular potentials. In Section \ref{Sec-GL} we show that one
can establish Lieb's inequality in a general setting of metric measure spaces. Our methodology uses the underlying Hunt process and hitting time probabilities. Using similar tools we also establish other
interesting spectral properties such as wavelength density, bounds on negative principal eigenvalue etc.

Our second problem deals with the uniqueness of the non-negative super-solution of 
$$\Delta u + V u^p \leq 0\quad \text{in}\;  K^c,$$
for some compact set $K$. Such problems came to interest due to seminal works of Gidas \cite{G80} and Gidas-Spruck \cite{GS81} which consider the problem in $\Rd, d\geq 3$,  for $V=1$. An enormous amount of
works have been done in generalizing this result in several other situations. \cite{MP99, MP04} use a nonlinear capacity argument together with some integral criterion on the potential $V$ to prove non-existence of non-trivial
supersolutions in $\Rd$. Recently, \cite{GS14} studies the similar problem with $V=1$ on smooth manifolds whereas \cite{GS17} considers this problem in the exterior domain of manifolds. In Section ~\ref{Sec-Uni} we show that
the above hitting time probabilities can be used cleverly to address non-existence of supersolutions in the exterior domain of metric measure spaces.

Our third result is a local Faber-Krahn inequality that is derived from a heat kernel upper bound in a local or non-local regular Dirichlet space. The local Faber-Krahn inequality was introduced in \cite{LS18} for divergence form operators on $\RR^d$ as a refinement of a similar estimate in \cite{RS18}. It states that, if a solution $u$ to the Dirichlet-Schr\"odinger problem in a domain $\Omega$ is large at a point $o \in \Omega$, in the sense that $|u(o)| > \frac{3}{4} \|u\|_{\infty}$, then either $V^-$ is large in some region or the point $o$ is far from the boundary. More specifically, if the process starting from $o$ is likely to reach the boundary within time $T$, then the potential $V^-$ must be large within some ball of radius $\mathcal R(T) = F^{-1}(T)$, that is,
$$\|V^-\|_{L^{\frac{\alpha_1}{\alpha_1 - \alpha_2 + \beta},1}(\Omega \cap B)} \gtrsim 1, $$
where the norm is taken in the appropriate Lorentz space, depending on the volume growth parameters $\alpha_1, \alpha_2$, and the time-space scaling $F(t) \simeq t^{\beta}$.

The rest of the article is organized as follows. Section~\ref{S-prelim} gathers some basic properties of Dirichlet spaces together with heat kernel estimates, whereas
in Section~\ref{S-Hitting} we obtain our main hitting time estimates. Generalized Lieb's inequality is then established in Section~\ref{Sec-GL}. Section~\ref{Sec-Uni}
deals with the uniqueness properties of the positive supersolutions. Finally, in Section~\ref{Sec-FK} we prove the local Faber-Krahn inequality.

\section{Preliminaries and some estimates}\label{S-prelim}
\subsection{Dirichlet forms and heat-kernel estimates}

Let $(\cX,d,\mu,\cE,\cF)$ be a metric measure Dirichlet space. That is, $(\cX,d)$ is a locally compact separable metric space, $\mu$ is a locally finite Radon measure on $X$ with full support, and $(\cE,\cF)$ is a strongly local regular Dirichlet form on $L^2(\cX,d\mu)$. We assume that all metric balls in $(\cX,d)$ are relatively compact. For a domain $\Omega \subset X$, let $C_0(\Omega)$ be the space of continuous function with compact support in $\Omega$. Let $\cF^0(\Omega)$ be the closure of $\cF \cap C_0(\Omega)$ in the norm of $\cF$.


We will consider solutions $u \in \cF^0(\Omega)$ to the Schr\"odinger-Dirichlet problem
\begin{align}  \label{eq:equation}
\begin{split}
 \cE(u,\phi) + \int_{\Omega} Vu \phi \, d\mu &= 0 ,\quad \forall \phi \in \cF^0(\Omega).
\end{split}
\end{align}

The volume of a ball $B(x,r)$ is denoted by $\sV(x,r) = \mu(B(x,r))$.
We assume that the {\em volume doubling property} (VD) holds,
$$ \sV(x, 2r) \lesssim \sV(x, r), \quad r>0, x\in \cX.$$
 The volume doubling property is equivalent to 
\begin{equation}\label{1.2}
\frac{\sV(y, R)}{\sV(x, r)}\lesssim \left(\frac{R + d(x, y)}{r}\right)^{\alpha_2}\quad 0<r\leq R, \ x, y\in \cX,
\end{equation}
see, e.g., \cite{GH09}.
In addition, we will assume the {\em reverse volume doubling property} (RVD),
\begin{align} \label{eq:RVD}
\frac{\sV(x,R)}{\sV(x,r)} \gtrsim \left( \frac{R}{r} \right)^{\alpha_1}, \qquad 0<r\leq R, x \in \cX,
\end{align}
for some $\alpha_1>0$. Note that (RVD) follows from (VD) if the space $\cX$ is connected and unbounded, see \cite[Proposition 3.3]{GHL09}.
The space $(X,d,\mu)$ is called $\alpha$-regular if $\alpha_1 = \alpha_2 = \alpha$.

We also require the mean exit function $F:(0, \infty)\to (0, \infty)$, a $\Cc^1$-function which is strictly increasing and satisfies
\begin{equation}\label{beta}
C^{-1}\left(\frac{R}{r}\right)^{\beta'}\leq\frac{F(R)}{F(r)}\leq C \left(\frac{R}{r}\right)^\beta ,\quad 0<r\leq R\,,
\end{equation}
for some constants $C>0$, $\beta\geq \beta' >1$. Moreover, we assume that
\begin{equation}\label{A1}
0<\inf_{(0, \infty)}\frac{rF'(r)}{F(r)}\leq\sup_{(0, \infty)}\frac{rF'(r)}{F(r)}<\infty.
\end{equation}
We denote by $\mathcal R$ the inverse of $F$.

We set
\[  \|f\|_{\cF} := \left( \cE(f,f) +  \int |f|^2 d\mu \right)^{1/2}, \]
For an open set $\Omega \subset X$, we set
\begin{align*}
& \cF_{\mbox{\tiny{{loc}}}}(\Omega)  :=  \{ f \in L^2_{\mbox{\tiny{loc}}}(\Omega) : \forall \textrm{ compact } K \subset \Omega, \ \exists f^{\sharp} \in D(\cE), f\big|_K = f^{\sharp}\big|_K \mbox{ $\mu$-a.e.} \},
\end{align*}
and we will write $\cF_{\mbox{\tiny{{loc}}}} =  \cF_{\mbox{\tiny{{loc}}}}(\cX)$.

We say that a function $u\in \cF$ is harmonic in $\Omega$ if
$$\cE(u, \varphi)=0\quad \text{for all}\; \varphi\in\cF^0(\Omega).$$
The {\em elliptic Harnack inequality} (EHI) holds if there exists a constant $C_H > 1$ and $\delta =\frac{1}{2}$ such that, for any ball 
$B(x,r)$ in $\cX$ and for any non-negative harmonic function $u$ on $B (x, r)$,
$$\esssup_{B(x, \delta r)} u \leq C_H\, \essinf_{B(x, \delta r)} u.$$
The choice of $\delta=\frac{1}{2}$ is arbitrary and could be replaced by any other parameter $\delta \in (0,1)$, as can be seen by a Whitney covering argument.

The {\em mean exit time estimate} $(\widetilde{E}_F)$ holds if
\begin{align*}
F(r) \lesssim \Exp_x \tau_{B(x,r)} \lesssim F(r)\,,
\end{align*}
for all $r>0$ and $x \in \cX \setminus \mathcal N$, where $\mathcal N$ is a properly exceptional set.

The {\em Faber-Krahn inequality} (FK) holds if there is a positive constant $\nu$ such that,
for all balls $B=B(x,r) \subset X$ and for all non-empty open sets $\Omega \subset B$, 
\begin{align*}
\lambda_0(\Omega) \gtrsim \frac{1}{F(r)} \left(\frac{\mu(B)}{\mu(\Omega)}\right)^\nu.
\end{align*}
where $\lambda_0(\Omega)$ is the bottom of the spectrum of the (positive) generator of
$(\cE,\cF^0(\Omega))$, that is,
$$ \lambda_0(\Omega) = \inf_{f \in \cF^0(\Omega) \setminus \{ 0 \}} \frac{\cE(f,f)}{\|f\|_2^2}. $$

Recall the following increasing function from \cite{GT12},
\begin{align} \label{Phi}
\Phi(s)=\sup_{r>0}\left\{\frac{s}{r}-\frac{1}{F(r)}\right\}.
\end{align}

The following result is part of \cite[Theorem 3.14]{GH14}.
\begin{theorem}[\cite{GH14}]
Assume (VD) and (RVD). Then (EHI) and $(\widetilde{E}_F)$ holds if and only if
the heat kernel $p_t(x,y)$ exists, has a H\"older continuous version in $x,y \in \cX$
, and there are constants $c,C \in (0,\infty)$ such that the following upper estimate holds,
\begin{equation}\label{UE}
p_t(x, y)\leq \frac{C}{\sV(x, \sR(t))} \exp\left(-\frac{t}{2}\Phi\left(c\frac{\D(x, y)}{t}\right)\right), \quad t>0,
\end{equation}
and there exist constants $\eta,c \in (0,1)$ such that the near-diagonal lower estimate holds,
\begin{equation}\label{NLE}
p_t(x, y)\geq \frac{c'}{\sV(x, \sR(t))},  \quad \forall t>0, \ \forall x,y \in \cX \mbox{ with } d(x, y)\leq \eta \sR(t).
\end{equation}
\end{theorem}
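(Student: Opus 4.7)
The statement is an equivalence of two deep analytic/probabilistic packages, so I would prove the two implications separately. The forward direction $(\text{EHI}) + (\widetilde{E}_F) \Rightarrow (\text{UE}) + (\text{NLE})$ is the substantive one and I would follow the Grigor'yan--Hu--Telcs strategy of passing through capacity estimates, Hölder regularity, and a Davies--Gaffney integrated bound. The backward direction uses the semigroup property and chaining of (NLE) more directly.

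For the forward direction, I would first extract a tail estimate on the exit time: by Markov's inequality $(\widetilde{E}_F)$ gives $\Prob_x(\tau_{B(x,r)} \le t) \le Ct/F(r)$, and the standard Lévy-type iteration with (VD) upgrades this to an exponential decay of the form $\exp\bigl(-\frac{t}{2}\Phi(c\,\D(x,y)/t)\bigr)$ for $t \le F(r)$; this is where the $\Phi$-factor in (UE) originates. Second, (EHI) together with $(\widetilde{E}_F)$ yields a parabolic oscillation estimate, hence Hölder continuity of caloric functions; this guarantees that the semigroup $P_t$ admits a jointly continuous kernel $p_t(x,y)$. Third, I would obtain the on-diagonal bound $p_t(x,x) \lesssim 1/\sV(x,\sR(t))$ via the Faber-Krahn / Nash inequality, which follows from (EHI) $+$ $(\widetilde{E}_F)$ through capacity estimates. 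Combining the on-diagonal bound with the hitting-time tail via a Davies--Gaffney / integrated-maximum-principle argument produces (UE). Finally, (NLE) would come from the on-diagonal lower bound $p_t(x,x) \gtrsim 1/\sV(x,\sR(t))$, derived from (UE) plus a mass-conservation argument at scale $\sR(t)$, together with the Hölder continuity of $(x,y) \mapsto p_t(x,y)$ that allows one to extend the lower bound to $\D(x,y) \le \eta \sR(t)$.

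For the backward direction, integrating (UE) over $B(x,r)^c$ at time $t = F(r)$ gives $\Prob_x(\tau_{B(x,r)} > t) \ge 1/2$, while Markov's inequality against (UE) at times $t \gg F(r)$ gives the matching upper bound; this yields $(\widetilde{E}_F)$. To recover (EHI) I would first chain (NLE) through a Harnack chain of balls of radius $\sim \sR(t)$ — using (VD) to control the number of overlaps and (UE) to bound the contributions from outside the chain — to upgrade (NLE) to a full sub-Gaussian lower bound for $p_t(x,y)$ at appropriate scales. Together with (UE), this gives the parabolic Harnack inequality by standard Moser-type arguments, which in turn implies (EHI).

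\textbf{Main obstacle.} The delicate step is the derivation of Hölder regularity of the heat kernel in the forward direction from (EHI) alone, since (EHI) is purely elliptic while the needed regularity is parabolic; bridging this gap is what requires the mean exit time control $(\widetilde{E}_F)$ and the full Grigor'yan--Hu machinery. The chaining argument in the backward direction is also nontrivial because (NLE) is restricted to $\D(x,y) \le \eta\sR(t)$ and one must iterate the semigroup identity $p_{2t}(x,y) = \int p_t(x,z) p_t(z,y)\, d\mu(z)$ carefully, using (VD) and (RVD) to produce enough mass in intermediate annuli.
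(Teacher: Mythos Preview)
The paper does not prove this theorem at all: it is quoted verbatim as part of \cite[Theorem~3.14]{GH14} and used as a black box, with no argument supplied. So there is no ``paper's own proof'' against which to compare your proposal.

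That said, your outline is a reasonable high-level sketch of the Grigor'yan--Hu/Grigor'yan--Telcs program (capacity/resistance $\Rightarrow$ Faber--Krahn $\Rightarrow$ on-diagonal bound, exit-time tail $\Rightarrow$ off-diagonal $\Phi$-factor, H\"older regularity from (EHI)$+(\widetilde E_F)$, and chaining of (NLE) for the converse). Since the present paper treats the result as input rather than something to be established, your write-up goes well beyond what is required here; a one-line citation to \cite{GH14} is all the paper offers and all that is expected.
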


From here until the end of Section \ref{Sec-Uni} we assume that (VD), (RVD), (EHI) and $(\widetilde{E}_F)$ hold.

\subsection{Eigensolutions}\label{eigensol}
Most of the analysis done in this article depend on the Feynman-Kac representation of the eigenfunctions. To justify the representation, we cite here a large family of potentials for which
the Feynman-Kac representation holds.
Let $(\cE, \cF)$ be a regular Dirichlet form on $(\cX, \mu)$. Let $V$ be a bounded, Borel measurable function.  Let $\{P_x, X_t\}$ be the Hunt processes associated to $(\cE, \cF)$ and
$\cN$ is a properly exceptional set. Consider an open set 
$\Omega$ in $\cX$ such that $\mu(\Omega)<\infty$.
Let $\uptau$ be the exit time from $\Omega$. As before, denote by $(\cE, \cF^0(\Omega))$ the Dirichlet-type
 restriction of $(\cE, \cF)$ to $\Omega$.  Let us define the Feynman-Kac semigroup as
$$T^{V, \Omega}_t f(x) =\Exp_x\left[ e^{-\int_0^t V(X_s) ds} f(X_t) \Ind_{\{t<\uptau\}}\right], \quad x\in \Omega \setminus\cN, f\geq 0.$$
It then follows from \cite[Theorem 5.1.3]{CF12} that the above semigroup is $\mu$-symmetric and the corresponding Dirichlet form is given by
$$\cE^\nu(u, v)=\cE(u, v) + (u, v)_\nu,\quad u, v\in \cF\cap L^2(\Omega, \nu),$$
where $\nu$ is the Revuz measure associated to the potential $V$. Since $V$ is bounded we have 
$$(u, v)_\nu= \int_{\Omega} u v V d\mu.$$
We assume the heat kernel upper bound \eqref{UE} holds. Take $f\in L^1(\Omega, \mu)$. Then 
\begin{align*}
\norm{T^{V, \Omega}_t f}_\infty \leq e^{\norm{V}_\infty t}\int_{\Omega} |f(y)| \norm{p_t(x, \cdot)}_{\infty} d\mu\leq \sup_{x\in \cX} \frac{e^{\norm{V}_\infty t}}{\sV(x, \sR(t))} \norm{f}_{L^1(\Omega)}.
\end{align*}
Now if $\inf_{x\in\cX} \sV(x, \sR(t))>0$, by \cite[Theorem~2.1]{BBCK} there exists a heat kernel $q^V_t(x,y)$ for $T^{V, \Omega}_t$, i.e., $T^{V, \Omega}_t(x, dy) = q^V_t(x, y) d\mu(y)$ and moreover, 
$$q^V_t(x, y)\leq \sup_{x\in \cX}\frac{e^{\norm{V}_\infty t}}{\sV(x, \sR(t))}\quad \text{for all}\; t>0, \; x, y\in \Omega\setminus\cN.$$
From the above bound it is obvious that $q^V_t\in L^2(\Omega\times\Omega, \mu\times\mu)$. Therefore, $T^{V, \Omega}_t: L^2(\Omega)\to  L^2(\Omega)$ is a Hilbert-Schmidt
operator and hence compact. Also, $T^{V, \Omega}_t$ is a symmetric operator as claimed above. Therefore, there exists a countable family of Dirichlet eigenpairs $\{(\varphi_n, \lambda_n)\}$ such that
$\lambda_1<\lambda_2\leq\lambda_3\ldots \to \infty$ and $\{\varphi_n\}$ forms an orthonormal basis in $L^2(\Omega)$ satisfying (see \cite{GGK})
$$T^{V, \Omega}_t u = \sum_{n=1}^\infty e^{-\lambda_n t} (u, \varphi_n) \varphi_n, \quad u\in L^2(\Omega), \quad t\geq 0.$$
It is also routine to verify that $\varphi_1$ has a fixed sign in $\Omega$. It is evident that $\varphi_n$ is in the domain of the generator of $\left(T^{V, \Omega}_t\right)_{t>0}$ and 
$$\cE(\varphi_n, v) + (\varphi_n V, v)=\lambda_n (\varphi_n, v) \quad \text{holds for any}\; v\in \cF^0(\Omega).$$
One could also consider singular potentials for which $T^{V, \Omega}$ is a compact semigroup and therefore, the above theory applies. In this article we are interested in the 
solution of 
\begin{equation}\label{E2.1}
\cE(u, v) + (u V, v)=0\quad\text{for all}\; v\in\cF^0(\Omega),
\end{equation}
which should be understood as $T^{V, \Omega}_t u =u $ in $\Omega$ for all $t\geq 0$.

\subsection{Hitting time estimates}\label{S-Hitting}
In this section we obtain some hitting time estimate which will be essential for our analysis. Recall that the Green function is given by
$$G(x, y)=\int_0^\infty p_t(x, y) \D{t}, \quad x, y\in \cX.$$
We shall assume that $G(x, y)$ is continuous in $x, y$ for $x\neq y$. Also define
$$G_T(x, y) =\int_0^T p_t(x, y).$$ 

For $r>0$ we let
$$T=F(\eta' r),$$ 
where $\eta'=2\eta^{-1}$ and $\eta$ is the parameter in the near-diagonal lower bound \eqref{NLE}.
We also fix a reference point $o\in \cX \setminus\cN$. 

We begin with the following lemma.
\begin{lemma}\label{L2.1}
The following holds.
\begin{itemize}
\item[(a)] Suppose $B=B(x, r)$ be such that $\overline{B}\subset \cX \setminus \{y\}$, then, for any $\delta > 0$, we have
$$\sup_{z\in B(x, \delta r)} G(z, y)\leq C_H \, \inf_{z\in B(x, \delta r)} G(z, y).$$
\item[(b)] For $\D(x, y)\leq r$ we have
$$G(x, y)\gtrsim G_T(x,y) \gtrsim \frac{F(r)}{\sV(x, r)}.$$
\end{itemize}
\end{lemma}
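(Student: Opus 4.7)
For part (a), my plan is to reduce directly to the elliptic Harnack inequality (EHI). Since $y\notin\overline{B(x,r)}$ by hypothesis, the map $z\mapsto G(z,y)$ is a non-negative harmonic function on $B(x,r)$: indeed, it is the potential of the point mass at $y$, so $\cE(G(\cdot,y),\varphi)=0$ for every test function $\varphi\in\cF^0(B(x,r))$ (whose support avoids $y$), which is the definition of harmonicity used in the excerpt. Applying (EHI) to this function on $B(x,r)$ yields the required comparison on $B(x,\delta r)$. The version of (EHI) stated in the preliminaries is written with $\delta=\tfrac{1}{2}$, but the Whitney-chain remark made right after that statement upgrades it to any $\delta\in(0,1)$ at the cost of a constant depending on $\delta$, which is exactly what is claimed.

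For part (b), the inequality $G(x,y)\ge G_T(x,y)$ is immediate from $p_t\ge 0$, so the real content is the second inequality. I plan to integrate the near-diagonal lower bound \eqref{NLE} over a carefully chosen sub-interval of $[0,T]$. Recall \eqref{NLE} gives $p_t(x,y)\gtrsim 1/\sV(x,\sR(t))$ whenever $\D(x,y)\le\eta\sR(t)$. Under the hypothesis $\D(x,y)\le r$, that condition is met once $\sR(t)\ge r/\eta$, i.e. once $t\ge F(r/\eta)$; and since $T=F(2r/\eta)$, the window $[F(r/\eta),T]$ is non-empty and contained in $[0,T]$. On this window $\sR(t)\in[r/\eta,2r/\eta]$, so (VD) gives $\sV(x,\sR(t))\lesssim\sV(x,r)$ uniformly in $t$, which leads to
$$G_T(x,y)\;\ge\;\int_{F(r/\eta)}^{F(2r/\eta)}\frac{c'}{\sV(x,\sR(t))}\,\D{t}\;\gtrsim\;\frac{F(2r/\eta)-F(r/\eta)}{\sV(x,r)}.$$

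It remains to verify the quantitative separation $F(2r/\eta)-F(r/\eta)\gtrsim F(r)$, and this is the step I expect to be the main (if mild) obstacle. The two-sided power bounds in \eqref{beta} alone give only $F(2r/\eta)/F(r/\eta)\ge C^{-1}2^{\beta'}$, which need not exceed $1$ when $C$ is large, so I will invoke \eqref{A1}: from $sF'(s)/F(s)\ge c_0>0$ one has $(\log F)'(s)\ge c_0/s$, and integrating over $[r/\eta,2r/\eta]$ yields $F(2r/\eta)/F(r/\eta)\ge 2^{c_0}$. Hence $F(2r/\eta)-F(r/\eta)\ge (2^{c_0}-1)F(r/\eta)\simeq F(r)$, where the final comparison uses the scaling \eqref{beta} applied between $r$ and $r/\eta$. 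Substituting this into the previous display completes the proof of (b).
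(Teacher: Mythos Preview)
Your argument for part (b) is essentially identical to the paper's: the same integration window $[F(r/\eta),F(2r/\eta)]$, the same application of \eqref{NLE} and (VD), and the same reliance on \eqref{A1} to show $F(2r/\eta)-F(r/\eta)\gtrsim F(r)$. The only cosmetic difference is that the paper applies the mean value theorem to write $F(\eta' r)-F(\eta^{-1}r)=F'(\xi r)\,r\eta^{-1}$ and then uses \eqref{A1} pointwise, whereas you integrate the logarithmic derivative; both are equivalent uses of \eqref{A1}.

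For part (a) your idea is right but you take a shortcut that the paper deliberately avoids. You assert directly that $z\mapsto G(z,y)$ is a non-negative weak-harmonic function in $B(x,r)$ and apply (EHI). The paper instead mollifies: it sets
\[
h_\eps(z)=\frac{1}{\sV(y,\eps)}\int_{B(y,\eps)}G(z,\xi)\,d\mu(\xi)
=\frac{1}{\sV(y,\eps)}\,\Exp_z\Bigl[\int_0^\infty \Ind_{B(y,\eps)}(X_t)\,dt\Bigr],
\]
shows via the strong Markov property that $h_\eps$ is probabilistically harmonic in $B$, invokes \cite[Chapter~6.7]{CF12} to conclude it is weakly harmonic, applies (EHI) to $h_\eps$, and then lets $\eps\to 0$ using the assumed continuity of $G$ off the diagonal. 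The point of this detour is that membership of $G(\cdot,y)$ itself in $\cF_{\mathrm{loc}}(B)$ and the identity $\cE(G(\cdot,y),\varphi)=0$ are not entirely free in the generality of regular Dirichlet forms; the averaged potential $h_\eps$ is manifestly finite on $\overline{B}$ and its probabilistic harmonicity is a one-line strong Markov computation. Your direct route is morally correct in this setting, but the justification you give (``potential of the point mass at $y$'') is exactly the step the paper is taking care to make rigorous.
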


\begin{proof}
Pick $\varepsilon$ small enough so that $B(y, \eps)\cap \overline{B}=\emptyset$. Define for $z\in B$, define
$$h_\eps(z, y)= \frac{1}{\sV(y, \eps)}\int_{B(y, \eps)} G(z, \xi) d\mu(\xi)= \frac{1}{\sV(y, \eps)}\Exp_z\left[\int_0^\infty \Ind_{B(y, \eps)}(X_t) dt\right].$$
For any Borel set $D$ in $B$, we note (by strong Markov property) that
$$h_\eps(z)=\Exp_z[h_\eps(X_{\uptau_D})].$$
Hence $h_\eps$ is harmonic in $B$ and therefore, it is also harmonic in the sense of a weak solution \cite[Chapter 6.7]{CF12}. Thus applying \cite{GH14}
we find 
$$\sup_{z\in B(x, \delta r)} h_\eps(z, y)\leq C_H \, \inf_{z\in B(x, \delta r)} h_\eps(z, y).$$
Now, using continuity of $G$ and letting $\eps\to 0$ we obtain (a).

Next we establish (b). Due to the near-diagonal lower bound \eqref{NLE}, we have for $r\geq d(x, y)$ that
\begin{align*}
G(x, y)\geq G_T(x, y) \geq \int_{F(\eta^{-1} r)}^{T} p_t(x, y) dt
 &\gtrsim \int_{F(\eta^{-1} r)}^{F(\eta' r)} \frac{1}{\sV(x, \sR(t))} dt\
\\
&\gtrsim \frac{F(\eta' r)-F(\eta^{-1} r)}{\sV(x, \eta' r)}
\\
&= \frac{F'(\xi r) r \eta^{-1}}{\sV(x, \eta' r)} \quad [\mbox{for some}\; \xi\in(\eta^{-1}, \eta')]
\\
&\gtrsim \frac{F(\xi r)}{\sV(x, \eta' r)} 
\\
&\gtrsim \frac{F(r)}{\sV(x,r)}, 
\end{align*}
where in the third line we used the mean-value theorem, the fourth line follows from \eqref{A1} and in the last line we used \eqref{beta} and \eqref{1.2}.
\end{proof}

The following estimate will be useful in the sequel.
\begin{lemma}\label{L2.2}
Let $r>0$. Recall that $T = F(\eta' r)$. Then 
\begin{itemize}
\item[(a)] we have  $$\int_0^{T} \frac{1}{V(x, \sR(t))} \exp\left(-\frac{t}{2}\Phi\left(c\frac{r}{t}\right)\right) dt
\lesssim \frac{F(r)}{\sV(x, r)}.$$

\item[(b)] for any point $x\in \cX$ we have
$$\sup_{z\in B(x, r)}\int_{B(x, r)} G_T(z, y) d\mu(y)\lesssim F(r).$$
\end{itemize}
\end{lemma}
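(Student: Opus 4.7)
For part (a), the plan is to split the integral at $t_0=F(r)$ and treat the two ranges separately. On $[t_0,T]$, the exponential factor is at most $1$, $\sV(x,\sR(t))\ge \sV(x,r)$ by \eqref{1.2} (since $\sR(t)\ge r$), and $T-t_0\le F(\eta' r)\lesssim F(r)$ by \eqref{beta}; this piece therefore contributes $\lesssim F(r)/\sV(x,r)$. The delicate range is $(0,t_0]$, where the (RVD) blow-up $(r/\sR(t))^{\alpha_1}$ in $1/\sV(x,\sR(t))$ threatens integrability. I extract decay from the exponential by testing the supremum in \eqref{Phi} at the single value $\rho=\sR(t)$, which uses $F(\sR(t))=t$ and gives $\Phi(cr/t)\ge cr/(t\sR(t))-1/t$. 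Hence $\exp(-\tfrac{t}{2}\Phi(cr/t))\lesssim \exp(-cr/(2\sR(t)))$.

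The change of variables $s=\sR(t)$, $dt=F'(s)\,ds$, then reduces the lower piece to $\int_0^r \frac{F'(s)}{\sV(x,s)}\exp(-cr/(2s))\,ds$, which I split again at $s=r/2$. On $[r/2,r]$, one has $F'(s)\asymp F(r)/r$ by \eqref{A1} and \eqref{beta}, $\sV(x,s)\asymp \sV(x,r)$ by (VD), and the exponential is $O(1)$, contributing $\lesssim F(r)/\sV(x,r)$. On $(0,r/2]$ I fix $k>\alpha_1-\beta'$, dominate $\exp(-cr/(2s))\le C_k(s/r)^k$, apply (RVD) to obtain $1/\sV(x,s)\lesssim (r/s)^{\alpha_1}/\sV(x,r)$, and use $F'(s)\lesssim F(r)(s/r)^{\beta'-1}/r$ from \eqref{beta} and \eqref{A1}; the integrand then becomes $\lesssim \frac{F(r)}{r\sV(x,r)}(s/r)^{k+\beta'-1-\alpha_1}$, an integrable power after the scaling $u=s/r$, yielding the required bound $\lesssim F(r)/\sV(x,r)$.

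Part (b) is much cleaner. By Fubini, $\int_{B(x,r)}G_T(z,y)\,d\mu(y)=\int_0^T\int_{B(x,r)} p_t(z,y)\,d\mu(y)\,dt$, and the sub-Markov property of the Dirichlet semigroup ($\int_\cX p_t(z,\cdot)\,d\mu\le 1$) bounds the right-hand side by $T=F(\eta' r)\lesssim F(r)$, uniformly in $z\in B(x,r)$. The principal obstacle is entirely in the small-time part of (a): converting the abstract $\Phi$-exponential into a polynomial decay factor that outweighs the (RVD)-induced volume blow-up. The trick of testing \eqref{Phi} at $\rho=\sR(t)$ is what achieves this; everything afterward is a direct computation driven by the scaling properties of $F$ and the (VD)/(RVD) hypotheses.
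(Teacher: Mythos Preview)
Your argument for part (a) follows the same key idea as the paper: change variables $t=F(s)$ and bound the exponential by testing the supremum in \eqref{Phi} at $m=\sR(t)=s$, which yields $\exp\bigl(-\tfrac{t}{2}\Phi(cr/t)\bigr)\lesssim \exp(-cr/(2s))$. The paper handles the resulting integral in one pass via the further substitution $t=r/s$, while you split at $s=r/2$; both are fine. However, there is a slip in your small-$s$ estimate: (RVD) gives the \emph{lower} bound $1/\sV(x,s)\gtrsim (r/s)^{\alpha_1}/\sV(x,r)$, which is the wrong direction. What you need is (VD) in the form \eqref{1.2} with $y=x$, which gives the upper bound $1/\sV(x,s)\lesssim (r/s)^{\alpha_2}/\sV(x,r)$. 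With this correction (and $k>\alpha_2-\beta'$) your computation goes through unchanged.

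For part (b), your argument is correct and considerably simpler than the paper's. The paper expresses $\int_{B(x,r)} G_T(z,y)\,d\mu(y)$ probabilistically, splits on $\{T\le \uptau_{3r}\}$ versus $\{\uptau_{3r}<T\}$, and invokes the mean exit time estimate $(\widetilde{E}_F)$ together with part (a) and \eqref{UE} to control the post-exit contribution. Your use of Fubini and the sub-Markov property $\int_\cX p_t(z,\cdot)\,d\mu\le 1$ bypasses all of this and gives the bound $\le T=F(\eta' r)\lesssim F(r)$ directly. The paper's route has the advantage of illustrating the probabilistic machinery used elsewhere (and the splitting argument reappears in Lemma~\ref{L2.3}), but for the bare statement of (b) your approach is more efficient.
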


\begin{proof}
Fix $r>0$. Then
\begin{align*}
\int_0^{F(\eta' r)} \frac{1}{\sV(x, \sR(t))} \exp\left(-\frac{t}{2}\Phi\left(c\frac{r}{t}\right)\right) dt
&= \int_0^{\eta' r} \frac{F'(s)}{\sV(x, s)} \exp\left(-\frac{F(s)}{2}\Phi\left(c\frac{r}{F(s)}\right)\right) ds\quad [\mbox{substituting }\; t=F(s)]
\\
&=\int_0^{\eta' r} \frac{sF'(s)}{s\sV(x, s)} \exp\left(-\frac{F(s)}{2}\Phi\left(c\frac{r}{F(s)}\right)\right) ds
\\
&\lesssim F(\eta' r) \int_0^{\eta' r} \frac{1}{s\sV(x, s)} \exp\left(-\frac{F(s)}{2}\Phi\left(c\frac{r}{F(s)}\right)\right) ds,\quad [\mbox{using}\; \eqref{A1}].
\end{align*}
For any $F(s)>0$ we note that
$$F(s)\Phi\left(\frac{r}{F(s)}\right)= F(s)\, \sup_{m>0}\left\{\frac{r}{F(s) m}-\frac{1}{F(m)}\right\}=\sup_{m>0}\left\{\frac{r}{m}-\frac{F(s)}{F(m)}\right\}\geq \frac{r}{s}-1, $$
choosing $m=s$. Using this in above expression we have
\begin{align*}
\int_0^{F(\eta' r)} \frac{1}{\sV(x, \sR(t))} \exp\left(-\frac{t}{2}\Phi(c\frac{r}{t})\right)&\lesssim F(\eta' r) \int_0^{\eta' r} \frac{1}{s\sV(x, s)} \exp\left(\frac{1}{2}-\frac{r}{2s}\right) ds\nonumber
\\
&=F(\eta' r) \int_{1/\eta'}^\infty \frac{1}{t\sV(x, r/t)} \exp\left(\frac{1}{2}-\frac{t}{2}\right) dt\quad [\mbox{substituting }\; t=r/s]\nonumber
\\
&\lesssim \frac{F(\eta' r)}{\sV(x, r\eta')} \int_{1/\eta'}^\infty t^{\alpha_2-1} \exp\left(\frac{1}{2}-\frac{t}{2}\right) dt\quad  [\mbox{Using } \; \eqref{1.2}]\nonumber
\\
&\lesssim \frac{F(\eta' r)}{\sV(x, r\eta')}
\\
&\lesssim \frac{F(r)}{\sV(x, r)}\quad [\text{by}\; \eqref{beta} \; \text{and}\; \eqref{1.2}].
\end{align*}
This completes the proof of (a).

Now we consider (b). Let $z \in B(x,r)$. Let $\uptau_{3r}$ be the exit time from the ball $B(z, 3r)\supset B(x, r)$. Then denoting $B(x, r)=B$ we write
\begin{align}\label{EL2.2A}
\int_{B} G_T(z, y) d\mu(y) &= \Exp_z\left[\int_0^T \Ind_{B}(X_t) dt\right]\nonumber
\\
&= \Exp_z\left[\Ind_{\{T\leq \uptau_{3r}\}}\int_0^T \Ind_{B}(X_t) dt\right] + \Exp_z\left[\Ind_{\{\uptau_{3r}<T\}}\int_0^T \Ind_{B}(X_t) dt\right]\nonumber
\\
&\leq \Exp_z[\uptau_{3r}] + \Exp_z\left[\Ind_{\{\uptau_{3r}<T\}}\int_{\uptau_{3r}}^T \Ind_{B}(X_t) dt\right]\nonumber
\\
&\lesssim F(3r) + \Exp_z\left[\Ind_{\{\uptau_{3r}<T\}}\int_{B} G_T(X_{\uptau_{3r}}, y) d\mu(y)\right],
\end{align}
where we applied $(\widetilde{E}_F)$ in the last inequality.
Since $X$ has continuous paths, we have $d(z, X_{\uptau_{3r}})=3r$ and thus $\D(X_{\uptau_{3r}}, y)\geq r$. Hence it follows from part (a) and \eqref{UE} that
\begin{align}\label{EL2.2B}
G_T(z, y) \lesssim \frac{F(r)}{\sV(z, r)}.
\end{align}
Also note that by \eqref{1.2}
$$\frac{\sV(x, r)}{\sV(z, r)}\lesssim \left(\frac{r+d(x, z)}{r}\right)^{\alpha_2} \lesssim 2^{\alpha_2}.$$
Thus using \eqref{beta} and \eqref{EL2.2B} in \eqref{EL2.2A} we obtain (b).
\end{proof}

Next lemma will be useful to obtain hitting time estimates.
\begin{lemma}\label{L2.3} 
Let $T=F(\eta' r)$. Then
\begin{equation}\label{EL2.3A}
\esssup_{z\in B(x, \varepsilon)} \frac{1}{\sV(y, \eps)}\int_{B(y, \varepsilon)} G_T(z, \xi) d\mu(\xi)\lesssim G(x, y) 
\end{equation}
for all $\eps$ small.
\end{lemma}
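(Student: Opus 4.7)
The plan is to bound $G_T$ pointwise by the full Green function $G$ and then apply the Green-function Harnack estimate from Lemma~\ref{L2.1}(a) twice, once in each argument. Assume $x \neq y$, since the case $x = y$ makes the right-hand side $G(x,x)$ (typically infinite) and renders the inequality trivial. Restrict attention to $\eps < d(x,y)/4$, so that $\overline{B(x,2\eps)} \subset \cX \setminus \{y\}$ and, for every $z \in B(x,\eps)$, one has $d(z,y) \geq d(x,y) - \eps > 3\eps$, hence $\overline{B(y,2\eps)} \subset \cX \setminus \{z\}$. These are the geometric prerequisites for Lemma~\ref{L2.1}(a).

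The first step is to use $G_T \leq G$ pointwise to replace $G_T(z,\xi)$ by $G(z,\xi)$ in the integrand. The second step applies Lemma~\ref{L2.1}(a) to the function $\xi \mapsto G(\xi,z) = G(z,\xi)$, which is harmonic on $B(y,2\eps)$ by the second geometric condition above; taking $\delta = \tfrac{1}{2}$ yields, for each fixed $z \in B(x,\eps)$,
\[
\esssup_{\xi \in B(y,\eps)} G(z,\xi) \leq C_H\, \essinf_{\xi \in B(y,\eps)} G(z,\xi) \leq C_H\, G(z,y),
\]
where the second inequality uses the assumed continuity of $G$ off the diagonal. Averaging over $\xi \in B(y,\eps)$ then gives
\[
\frac{1}{\sV(y,\eps)} \int_{B(y,\eps)} G_T(z,\xi)\, d\mu(\xi) \leq C_H\, G(z,y)
\]
uniformly in $z \in B(x,\eps)$.

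The third step is a second application of Lemma~\ref{L2.1}(a), now to the function $z \mapsto G(z,y)$, which is harmonic on $B(x,2\eps)$ by the first geometric condition; with $\delta = \tfrac{1}{2}$ this gives
\[
\esssup_{z \in B(x,\eps)} G(z,y) \leq C_H\, G(x,y).
\]
Combining the two Harnack estimates produces \eqref{EL2.3A} with implicit constant $C_H^2$.

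There is no substantial obstacle here: the entire argument is a double application of the Green-function Harnack inequality, and the only thing one must check is the geometric compatibility permitting both Harnack applications, namely that the enlarged balls $B(x,2\eps)$ and $B(y,2\eps)$ are separated from the opposite centre. This is precisely what the phrase ``for all $\eps$ small'' encodes and is captured by the condition $\eps < d(x,y)/4$ above.
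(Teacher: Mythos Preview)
Your argument is correct for the case $d(x,y) > 4\eps$, and indeed matches the paper's treatment of the far-from-diagonal case $d(x,y) > 3\eps$. However, your interpretation of ``for all $\eps$ small'' as ``$\eps < d(x,y)/4$'' is too weak for the intended application. The lemma is invoked in the proof of Lemma~\ref{L2.4} to furnish a dominating function for the dominated convergence theorem applied to the double integral $\int_K\int_K \frac{h^*_{T,\eps}(x,y)}{h_{T,\eps}(o,y)}\,d\nu(x)\,d\nu(y)$ as $\eps\to 0$. For this one needs the bound \eqref{EL2.3A} with an implicit constant and an $\eps$-threshold that are both \emph{independent of $x,y$}; in particular the estimate must hold for pairs $(x,y)\in K\times K$ with $d(x,y)\leq 3\eps$, a region of positive $\nu\otimes\nu$-measure for each fixed $\eps>0$.

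The paper therefore treats the near-diagonal case $d(x,y)\leq 3\eps$ separately, and this case genuinely requires a different idea: the double Harnack argument fails because the balls $B(x,2\eps)$ and $B(y,2\eps)$ overlap. Instead, one stops the process at the exit time $\uptau$ from $B(x,10\eps)$, uses the mean exit time bound $(\widetilde{E}_F)$ to control the contribution up to $\uptau$ by $F(\eps)/\sV(y,\eps)$, and then observes that after $\uptau$ the process is at distance $>3\eps$ from $y$, so the far-from-diagonal Harnack bound applies to $G_T(X_\uptau,\xi)$. A further Harnack step and the inequality $G(x,y)\geq \Exp_x[G(X_\uptau,y)]$ (which follows from the strong Markov property) reduce the post-$\uptau$ contribution to $G(x,y)$, while the pre-$\uptau$ term $F(\eps)/\sV(y,\eps)$ is absorbed by $G(x,y)$ via Lemma~\ref{L2.1}(b). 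Your proof is missing this entire near-diagonal analysis.
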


\begin{proof}
Suppose $\D(x, y)>3\varepsilon$. Then due to the elliptic Harnack inequality of Lemma \ref{L2.1}(a) and the symmetry of $G$ we find
$$\sup_{z\in B(x, \varepsilon)} \sup_{\xi\in B(y, \varepsilon)} G(z, \xi)
\leq C_H \sup_{z\in B(x, \varepsilon)} G(z, y)\leq C_H^2 G(x, y).$$
Since $G_T\leq G$, we have \eqref{EL2.3A} in this case. 

Next we consider the case $\D(x, y)\leq 3\varepsilon$.
To prove \eqref{EL2.3A} we consider the function
$$h_{T,\varepsilon}(z, y) = \int_{B(y, \varepsilon)} G_T(z, \xi) d\mu(\xi).$$
Pick $z\in B(x, \varepsilon)$ and let $\uptau$ be the exit time from $B(x, 10\varepsilon)$.  Then for $z\in B(x, r)\setminus \cN$,
we obtain
\begin{align}\label{EL2.3B}
h_{T,\varepsilon}(z, y) &\leq \left[\int_0^T \Exp_z[\Ind_{B(y, \varepsilon)}(X_s)] ds\right]\nonumber
\\
&= \Exp_z\left[\Ind_{\{T\leq \uptau\}}\int_0^T \Ind_{B(y, \eps)}(X_t) dt\right] + \Exp_z\left[\Ind_{\{\uptau<T\}}\int_0^T \Ind_{B(y, \eps)}(X_t) dt\right]\nonumber
\\
&\leq \Exp_z[\uptau_{B(z, 20\eps)}] + \Exp_z\left[\Ind_{\{\uptau<T\}}\int_{\uptau}^T \Ind_{B(y, \eps)}(X_t) dt\right]\nonumber
\\
&\leq F(20\eps) + \Exp_z\left[\Ind_{\{\uptau<T\}}\int_{B(y, \eps)} G_T(X_{\uptau}, \xi) d\mu(\xi)\right]\nonumber
\\
&\lesssim  F(3\eps)+  \Exp_z\left[\int_{B(y, \eps)} G_T(X_{\uptau}, \xi) d\mu(\xi)\right], 
\end{align}
where we used $(\widetilde{E}_F)$, \eqref{beta}.
Since $d(X_\uptau, y)> 3\eps$, we have \eqref{EL2.3A} which implies  
$$\Exp_z\left[\sV(y, \eps)^{-1} \int_{B(y, \eps)}G_T(X_{\uptau}, \xi) d\mu(\xi)\right] \lesssim \Exp_z[G(X_\uptau, y)].$$

For $d(x, y)\leq 3\eps$ we note that $v(z)=\Exp_z[h(X_\uptau)]$ is a harmonic function in $B(x, 8\eps)$. Hence we have
$$v(z)\leq C_H v(x).$$

Again for any Borel set $A$ we have
\begin{align*}
\int_A G(x, y)d\mu(y)= \Exp_x\left[\int_0^\infty \Ind_A(X_t) dt\right]&\geq \Exp_x\left[\int_\uptau^\infty \Ind_A(X_t) dt\right]
\\
&= \Exp_x\left[\Exp_{X_\uptau}\int_0^\infty \Ind_A(X_t) dt\right]
\\
&=\Exp_x \left[\int_A G(X_\uptau, y) d\mu(y)\right]= \int_A \Exp_x[G(X_\uptau, y)] d\mu(y).
\end{align*}
Since $A$ arbitrary, this of course, implies that $G(x, y)\geq \Exp_x[G(X_\uptau, y)]$. Thus putting these estimates in \eqref{EL2.3B} we obtain,
$$ \sV(y, \eps)^{-1} h_{T,\varepsilon}(z, y) \lesssim  \frac{F(3\eps)}{\sV(y, \eps)} +  G(x, y).$$
Applying Lemma~\ref{L2.1}(b) we get \eqref{EL2.3A}.
\end{proof}

Now we prove one of our main hitting time estimates. By $\uuptau_K$ we denote the first hitting time to a set $K$ i.e.,
$$\uuptau_K=\inf\{t>0\; : \; X_t\in K\}.$$
The following proof is inspired by \cite{MP}.

\begin{lemma}\label{L2.4}
Let $K$ be any compact subset of $B(o, r)$ not containing $o$, and let $\uuptau=\uuptau_K$ be the
hitting time of $K$. Then for $\nu=\frac{1}{\mu(K)}\Ind_K\mu$, we have
\begin{equation}\label{EL2.4A}
\Prob_o(\uuptau\leq T)\geq \left[2\int_{K}\int_{K}\frac{G_T(x, y)}{G_T(o, y)} d\nu(y) d\nu(x)\right]^{-1},
\end{equation}
where $T=F(\eta' r)$. In particular, we have
$$\Prob_o(\uuptau\leq T)\geq C_1 \frac{\mu(K)}{\sV(o, r)},$$
for some universal constant $C_1$.
\end{lemma}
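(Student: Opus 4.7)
My plan is to apply a Paley--Zygmund (second moment) argument to a weighted occupation time of $K$, in the spirit of \cite{MP}. Set
\[ \phi(x) \df \frac{\Ind_K(x)}{G_T(o,x)}, \qquad Z \df \int_0^T \phi(X_s)\, ds. \]
Since $K \subset B(o,r)$, Lemma~\ref{L2.1}(b) gives $G_T(o,x) \gtrsim F(r)/\sV(o,r) > 0$ for every $x \in K$, so $\phi$ is bounded on $K$ and $Z$ is a nonnegative random variable with $\{Z>0\} \subset \{\uuptau\leq T\}$. The weight $\phi$ is tuned so that the first moment is trivial: by Fubini,
\[ \Exp_o Z = \int_K \frac{G_T(o,x)}{G_T(o,x)}\, d\mu(x) = \mu(K). \]

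Cauchy--Schwarz applied to $Z = Z\Ind_{\{\uuptau\leq T\}}$ then gives $(\Exp_o Z)^2 \leq \Prob_o(\uuptau\leq T)\,\Exp_o[Z^2]$, so the statement reduces to an upper bound on the second moment. I would write $Z^2 = 2\int_0^T\!\!\int_s^T \phi(X_s)\phi(X_t)\, dt\, ds$, apply the Markov property at time $s$, and dominate $\int_0^{T-s} p_u(x,y)\, du$ by $G_T(x,y)$. This yields
\[ \Exp_o[Z^2] \leq 2\int\!\!\int \phi(x)\phi(y)\, G_T(o,x)\, G_T(x,y)\, d\mu(x)\, d\mu(y) = 2\mu(K)^2 \int_K\!\!\int_K \frac{G_T(x,y)}{G_T(o,y)}\, d\nu(y)\, d\nu(x), \]
the crucial cancellation being that the $G_T(o,x)$ produced by the Markov step is killed by one factor of $\phi = 1/G_T(o,\cdot)$. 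Dividing yields \eqref{EL2.4A}.

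For the explicit lower bound $C_1\mu(K)/\sV(o,r)$, I would estimate the normalised double integral using the two preceding lemmas. Lemma~\ref{L2.1}(b) gives $G_T(o,y) \gtrsim F(r)/\sV(o,r)$ uniformly for $y \in K$, while Lemma~\ref{L2.2}(b) applied on the ball $B(o,r) \supset K$ gives $\int_K G_T(x,y)\, d\mu(y) \lesssim F(r)$ uniformly for $x \in K$. Combining,
\[ \int_K\!\!\int_K \frac{G_T(x,y)}{G_T(o,y)}\, d\mu(x)\, d\mu(y) \lesssim \frac{\sV(o,r)}{F(r)}\cdot\mu(K)\cdot F(r) = \sV(o,r)\,\mu(K), \]
so the double integral against $d\nu\otimes d\nu$ is $\lesssim \sV(o,r)/\mu(K)$, which gives the claim after inverting.

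The main subtlety is choosing the weight $\phi$ correctly: one needs the Markov-property expansion of the second moment to produce the asymmetric product $G_T(o,x)G_T(x,y)$, so that exactly one factor of $G_T(o,\cdot)$ cancels against $\phi$ and leaves the ratio $G_T(x,y)/G_T(o,y)$ that appears in the statement. Once the weight is identified, the remainder is Fubini plus the Green-function estimates already in hand, so no new geometric input is needed.
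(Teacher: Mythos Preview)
Your approach is correct and in fact cleaner than the paper's. Both arguments are second--moment (Paley--Zygmund) estimates in the spirit of \cite{MP}, but the paper works with an $\varepsilon$-smoothed version of your occupation functional,
\[
Z_\varepsilon \;=\; \int_K \int_0^T \frac{\Ind_{\{X_t \in B(y,\varepsilon)\}}}{h_{T,\varepsilon}(o,y)}\, dt\, d\nu(y),
\qquad
h_{T,\varepsilon}(o,y) \;=\; \int_{B(y,\varepsilon)} G_T(o,\xi)\, d\mu(\xi),
\]
and then sends $\varepsilon \to 0$. This limit requires a uniform domination of $\sup_{z \in B(x,\varepsilon)} h_{T,\varepsilon}(z,y)/\sV(y,\varepsilon)$ by $G(x,y)$, which is precisely the content of Lemma~\ref{L2.3} (whose proof uses the elliptic Harnack inequality and transience of $G$), together with a continuity argument for $G_T$ to justify dominated convergence. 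Your direct choice $Z = \int_0^T \Ind_K(X_s)\,G_T(o,X_s)^{-1}\, ds$ sidesteps all of this: the only set-theoretic input you need is the trivial inclusion $\{Z>0\} \subset \{\uuptau \leq T\}$, and the Markov/Fubini expansion of $\Exp_o[Z^2]$ is exactly as you wrote, with $G_{T-s}\le G_T$ giving the asymmetric kernel $G_T(o,x)G_T(x,y)$. The $\varepsilon$-regularization in \cite{MP} is designed for sets of zero measure but positive capacity, where the occupation time vanishes identically; here the target inequality is vacuous when $\mu(K)=0$, so nothing is lost by working with the raw occupation time. Your derivation of the explicit constant via Lemmas~\ref{L2.1}(b) and~\ref{L2.2}(b) agrees with the paper's.
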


\begin{proof}
Define
$$h_{T, \eps}(x, y)=\int_{B(y, \eps)} G_T(x, \xi) d\mu(\xi),\quad \text{and}\quad h^*_{T,\eps}(x, y)=\sup_{z\in B(x, \eps)} h_{T, \eps}(z, y).$$ 
Note that for $x\neq y, x, y\notin \cN$ we have
\begin{equation}\label{EL2.4B}
\lim_{\eps\to 0}\frac{1}{V(y, \eps)}h^*_\eps(x, y)=G_T(x, y).
\end{equation}
This can be easily checked from the heat kernel estimates. Note that by \cite[Theorem~5.11]{GT12} $p_t(\cdot, y)$ is H\"{older} continuous uniformly in $t\in [\kappa, T]$ for $y\in B \setminus\cN$.
Since $\int_0^\kappa p_t(x, y) dt$ can be made very small for $\kappa$ small, as $x\neq y$, we have the claim.

Now following \cite{MP} we consider
$$Z_\eps=\int_K \int_0^T \frac{1}{h_{T, \eps}(o, y)}\Ind_{\{X_t\in B(y, \eps)\}} dt d\nu(y).$$
Note that $\Exp_o[Z_\eps]=1$. By symmetry and the Markov property we easily get \cite[pp. 237]{MP}
\begin{align}\label{EL2.4C}
\Exp_o[Z^2_\eps]&=2 \Exp_o \int_0^T ds\int_s^T dt \int_K\int_K \frac{\Ind_{\{X_s\in B(x, \eps), X_t\in B(y, \eps)\}}}{h_{T, \eps}(o, y) h_{T, \eps}(o, x)}d\nu(x)\, d\nu(y)\nonumber
\\
&\leq 2 \Exp_o \int_0^T ds \int_K\int_K \frac{\Ind_{\{X_s\in B(x, \eps)\}} h^*_{T, \eps}(x, y)}{h_{T, \eps}(o, y) h_{T, \eps}(o, x)}d\nu(x)\, d\nu(y)\nonumber
\\
&= 2 \int_K\int_K \frac{ h^*_{T, \eps}(x, y)}{h_{T, \eps}(o, y)}d\nu(x)\, d\nu(y)
\end{align}
It follows from Lemma~\ref{L2.1}(b) that $\min_{y\in K} G_T(o, y)$ is positive, and therefore $\frac{1}{V(y, \eps)}h_{\eps}(0, y)$ is positive for all $\eps$ small. Again by Lemma~\ref{L2.3} we have
$$\frac{1}{V(y, \eps)}h^*_{T, \eps}(x, y)\lesssim G(x, y)\lesssim G_T(x, y) + \int_T^\infty \frac{1}{V(x, \sR(s))} ds.$$
By the near-diagonal lower estimate \eqref{NLE} we know that
$$p_t(x, y)\gtrsim \frac{1}{V(x, \sR(t))}, \quad \text{for}\; d(x, y)\leq \eta \sR(t).$$
Since the Green function is finite, it immediately implies 
$\int_T^\infty \frac{1}{V(x, \sR(s))} ds$ is finite uniformly in $x\in K$. 

Thus using Lemma~\ref{L2.2}(b), \eqref{EL2.4B} and dominated convergence theorem we can pass to the limit in \eqref{EL2.4C} to obtain 
\begin{equation}\label{EL2.4D}
\lim_{\eps\to 0}\int_K\int_K \frac{h^*_\eps(x, y)}{h_\eps(o, y)} d\nu(x)\, d\nu(y)=\int_K\int_K \frac{G_T(x, y)}{G_T(o, y)} d\nu(x)\, d\nu(y).
\end{equation}
Note that $\{Z_\eps>0\}=\{\exists t\in (0, T]\; \text{and}\; y\in K\; \text{such that}\; X_t\in B(y, \eps)\}$ is the event that the process visits the $\eps$-neighborhood of $K$ by time $T$. By the Cauchy-Schwarz inequality,
\begin{align} \label{EL2.4F}
\Prob_o(Z_\eps>0)\geq \frac{(\Exp_o Z_\eps)^2}{\Exp_o[Z^2_\eps]}=\frac{1}{\Exp_0[Z^2_\eps]}.
\end{align}
Observe that 
\begin{align} \label{EL2.4E}
\Prob_o(\uuptau_K\leq T)= \lim_{\eps\to 0} \Prob_o(Z_\eps>0).
\end{align}
Hence \eqref{EL2.4A} follows by putting together \eqref{EL2.4E}, \eqref{EL2.4F}, \eqref{EL2.4C} and \eqref{EL2.4D}.

To prove the second part it is enough to note from Lemma~\ref{L2.1}(b) and ~\ref{L2.2}(b) that
$$\sup_{x\in K}\int_{K} \frac{G_T(x, y)}{G_T(o, y)} d\mu(y)\lesssim \sV(o, r).$$
\end{proof}

We note that Lemma~\ref{L2.4} holds when the Hunt process is transient. Next, we obtain similar results for the recurrence case.
\begin{lemma}\label{L2.5}
Suppose that $F(r)=r^\beta$, $\sV(x, r)\simeq r^\alpha$ and $\beta\in [\alpha, 2\alpha)$. For $r>0$ we define $T=(\eta' r)^{\nicefrac{1}{\beta}}$.
Then there exists a universal constant $C_1$ such that for any compact $K\subset B(o, r)$, not containing $o$, we have
$$\Prob_o(\uuptau\leq T)\geq C_1 \frac{\mu(K)}{\sV(o, r)}.$$
\end{lemma}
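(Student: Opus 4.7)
The plan is to mirror the proof of Lemma~\ref{L2.4} verbatim, replacing the Green function $G$ --- which is infinite in the recurrent regime --- by the truncated Green function $G_T$ (and occasionally $G_{cT}$ for some $c>1$) throughout. The Cauchy--Schwarz lower bound $\Prob_o(Z_\eps > 0) \geq 1/\Exp_o[Z_\eps^2]$ and the identity $\Prob_o(\uuptau_K \leq T) = \lim_{\eps \to 0} \Prob_o(Z_\eps > 0)$ are unchanged; only the ingredient supplied by Lemma~\ref{L2.3} needs to be re-proved in a form that avoids the (now infinite) full Green function.

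The first step is to establish a recurrent analog of Lemma~\ref{L2.3}: for every $x, y \in B(o, r) \setminus \cN$ and all sufficiently small $\eps > 0$,
$$\esssup_{z \in B(x, \eps)} \sV(y, \eps)^{-1} \int_{B(y, \eps)} G_T(z, \xi) d\mu(\xi) \lesssim G_{cT}(x, y)$$
for a suitable absolute constant $c > 1$. The dichotomy is the same as in Lemma~\ref{L2.3}. In the off-diagonal case $d(x,y) > 3\eps$, any $z \in B(x, \eps)$, $\xi \in B(y, \eps)$ satisfies $d(z, \xi) \geq d(x,y)/3$, so the heat-kernel upper bound \eqref{UE} combined with the $\alpha$-regular volume and $F(s) = s^\beta$ gives a pointwise bound on $G_T(z, \xi)$ by direct integration in time --- essentially Lemma~\ref{L2.2}(a) applied with $r$ replaced by $d(x, y)$, together with the trivial tail bound $\exp(-\tfrac{t}{2}\Phi(\cdot)) \leq 1$ on the remaining portion of the integration range, which is finite because $\int_0^T t^{-\alpha/\beta} dt \simeq T^{1-\alpha/\beta}$ by $\beta \in [\alpha, 2\alpha)$. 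In the near-diagonal case $d(x, y) \leq 3\eps$, exiting $B(x, 10\eps)$ by the strong Markov property gives $h_{T,\eps}(z, y) \lesssim F(20\eps) + \Exp_z[\int_{B(y, \eps)} G_T(X_\uptau, \xi) d\mu(\xi)]$; since $d(X_\uptau, y) \geq 7\eps > 3\eps$ the off-diagonal bound applies to $X_\uptau$, and the residual $F(20\eps)/\sV(y, \eps) \simeq \eps^{\beta - \alpha}$ is absorbed into $G_{cT}(x, y)$ via Lemma~\ref{L2.1}(b).

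With this bound in hand, the second-moment computation of Lemma~\ref{L2.4} transfers directly: $\Exp_o Z_\eps = 1$ and
$$\Exp_o Z_\eps^2 \leq 2 \int_K \int_K \frac{h^*_{T,\eps}(x, y)}{h_{T,\eps}(o, y)} d\nu(x) d\nu(y),$$
with the $\eps$-uniform majorant $G_{cT}(x, y)/G_T(o, y)$ integrable on $K \times K$ thanks to Lemma~\ref{L2.2}(b) (giving $\int_K G_{cT}(x, y) d\mu(y) \lesssim F(r)$) and Lemma~\ref{L2.1}(b) (giving $G_T(o, y) \gtrsim F(r)/\sV(o, r)$). Dominated convergence then yields
$$\limsup_{\eps \to 0} \Exp_o Z_\eps^2 \leq 2 \int_K \int_K \frac{G_T(x, y)}{G_T(o, y)} d\nu(x) d\nu(y),$$
and the same two lemmas bound this limit by $\sV(o, r)/\mu(K)$ exactly as at the end of Lemma~\ref{L2.4}'s proof, producing $\Prob_o(\uuptau \leq T) \gtrsim \mu(K)/\sV(o, r)$.

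The main obstacle is the near-diagonal case of Step~1. Lemma~\ref{L2.3} crucially used that $G(\cdot, y)$ is harmonic away from $y$ to invoke the elliptic Harnack inequality; this property fails for $G_T$, so a new argument is required. The workaround is to dispense with harmonicity entirely and rely on direct heat-kernel integration, using the power-law structure $F(s) = s^\beta$ and $\sV(x, s) \simeq s^\alpha$. The hypothesis $\beta \in [\alpha, 2\alpha)$ is precisely what keeps $G_T$ locally integrable --- with at worst a logarithmic singularity on the diagonal in the critical case $\alpha = \beta$ --- which is what is needed to construct an $\eps$-uniform dominating function for the dominated convergence step.
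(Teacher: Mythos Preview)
Your approach is correct and in fact more unified than the paper's. The paper treats $\alpha<\beta$ and $\alpha=\beta$ separately: in the strict case it records the two-sided bound $d(x,y)^{\beta-\alpha}\lesssim G_T(x,y)\lesssim r^{\beta-\alpha}$, uses continuity of $G_T$ to pass to the limit, and then controls $\int_K r^{\beta-\alpha}/d(o,y)^{\beta-\alpha}\,d\nu(y)$ via the Lorentz--H\"older inequality; in the critical case $\alpha=\beta$ it derives logarithmic two-sided bounds and, for the dominated-convergence step, proves a variant of Lemma~\ref{L2.3} by invoking the ball Green function $g^B$ of \cite{GH14} and \emph{its} elliptic Harnack inequality --- precisely the harmonicity machinery you set out to avoid --- before finishing with $\log x\le x$ and another Lorentz--H\"older estimate. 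Your route bypasses both the case split and the Lorentz--H\"older inequality: the $\eps$-uniform majorant $G_{cT}(x,y)$ comes from direct heat-kernel integration (Lemma~\ref{L2.2}(a) on the head plus the crude bound $e^{-\frac{t}{2}\Phi}\le 1$ on the tail), and the final estimate $\iint G_T/G_T\lesssim \sV(o,r)/\mu(K)$ follows from Lemmas~\ref{L2.1}(b) and~\ref{L2.2}(b) exactly as at the end of Lemma~\ref{L2.4}. One caveat: the phrase ``$\int_0^T t^{-\alpha/\beta}\,dt\simeq T^{1-\alpha/\beta}$'' is false at $\alpha=\beta$; what your decomposition actually produces is the \emph{tail} integral $\int_{F(cd(x,y))}^T t^{-\alpha/\beta}\,dt$, which for $\alpha=\beta$ equals $\beta\log\bigl(r/d(x,y)\bigr)$ and is still dominated by the lower bound $G_{cT}(x,y)\gtrsim \log\bigl(c'r/d(x,y)\bigr)$. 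Your closing remark on the logarithmic singularity shows you anticipated this, but the body of the off-diagonal step should be stated at that level of precision.
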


\begin{proof}
From the calculations of Lemma~\ref{L2.1}(b) and Lemma~\ref{L2.1}(a) it is evident that
$$d(x, y)^{\beta-\alpha}\lesssim G_T(x, y)\lesssim T^{\frac{\beta-\alpha}{\beta}}\lesssim r^{\beta-\alpha} \quad \text{for}\; d(x, y)\leq r, \; \alpha<\beta.$$
It is also easy to check that $G_T(x, y)$ is continuous in both the variables. Hence we can justify the passage of limit in \eqref{EL2.4C} to obtain
\begin{equation}\label{EL2.5A}
\Prob_o(\uuptau\leq T)\geq \left[2\int_{K}\int_{K}\frac{G_T(x, y)}{G_T(o, y)} d\nu(y) d\nu(x)\right]^{-1}.
\end{equation}
Note that 
$$\iint \frac{G_T(x, y)}{G_T(o, y)}d\nu(x)\, d\nu(y) \lesssim \int_{K}\int_{K}\frac{r^{\beta-\alpha}}{d^{\beta-\alpha}(o, y)} d\nu(y) d\nu(x)=
\int_{K}\frac{r^{\beta-\alpha}}{\D^{\beta-\alpha}(o, y)} d\nu(y).$$
On the other hand, using the Lorentz-H\"older inequality \cite[Theorem~3.5]{oneil},
$$\left|\int_B d^{\alpha-\beta}(o, y) d\mu\right|\leq \norm{\Ind_B}_{L^{\frac{\alpha}{2\alpha-\beta}, 1}}\, \norm{\D^{\alpha-\beta}(o, y)}_{L^{\frac{\alpha}{\beta-\alpha}, \infty}}$$
Note that 
$$\norm{\Ind_B}_{L^{\frac{\alpha}{2\alpha-\beta}, 1}}\lesssim r^{2\alpha-\beta}$$
and 
$$\norm{\D^{\alpha-\beta}(o, y)}_{L^{\frac{\alpha}{\beta-\alpha}, \infty}}\leq C_2,$$
for some universal constant $C_2$. Therefore,
$$\int_{K}\frac{r^{\beta-\alpha}}{\D^{\beta-\alpha}(o, y)} d\nu(y)\lesssim \frac{1}{\mu(K)} r^{\alpha}\simeq \frac{\sV(o, r)}{\mu(K)}.$$
inserting this estimate in \eqref{EL2.5A} we have Lemma~\ref{L2.5}.

To complete the proof we remain to study the case $\alpha=\beta$. In this situation we have for $d(x, y)\leq r$ that
\begin{align*}
G_T(x, y)
&\lesssim \int_0^{\eta' r} \frac{s^{\beta-1}}{\sV(x, s)} \exp(-c d^{\nicefrac{\beta}{\beta-1}}(x, y) s^{-\nicefrac{\beta}{\beta-1}})\, ds \quad [\mbox{Substituting}\; s^\beta=t]
\\
&\lesssim \int_0^{\eta' r} \frac{1}{s} \exp(-c d^{\nicefrac{\beta}{\beta-1}}(x, y) s^{-\nicefrac{\beta}{\beta-1}})\, ds
\\
&\simeq \int_{\frac{d(x, y)}{\eta' r}}^\infty t^{-1}  \exp(-c t^{\nicefrac{\beta}{\beta-1}})\, dt \quad [\mbox{Substituting}\; d(x, y)/s=t].
\\
&\leq \int_{\frac{d(x, y)}{\eta' r}}^1 t^{-1}  dt + \int_{1}^\infty   \exp(-c t^{\nicefrac{\beta}{\beta-1}})\, dt
\\
&\lesssim (\log\frac{r}{d(x, y)} + 1).
\end{align*}
Similarly, since $\eta'=2\eta^{-1}$
$$G_T(x, y)\gtrsim \int_{\frac{d(x, y)^\beta}{\eta^\beta}}^{(\eta' r)^\beta} s^{-1} ds\gtrsim \log\frac{2^\beta r}{d(x, y)}.$$
Let us first complete the proof assuming that we arrive at \eqref{EL2.5A} in this case. Since $\log x\leq x$ for $x>0$, we note that
$$G_T(x, y)\lesssim \frac{r}{d(x, y)}\quad \text{for}\; d(x, y)\leq r.$$
Therefore, applying the Lorentz-H\"older inequality \cite[Theorem~3.5]{oneil} we get
\begin{align*}
\int_{B(o, r)} G_T(x, y) d\mu(y)\lesssim r \int_{B(o, r)} \frac{1}{d(x, y)} d\mu(y)
\leq  r \norm{\Ind_B}_{L^{\frac{\beta}{\beta-1}, 1}}\, \norm{\frac{1}{d(x, \cdot)}}_{L^{\beta, \infty}}\lesssim r^{\beta}\simeq \sV(o, r)\,.
\end{align*}
Since $G_T(o, y)\gtrsim 1$, we complete the proof.

Thus we remain to prove \eqref{EL2.5A}. To do this we have to justify the passage of limit in \eqref{EL2.4C}. We show a variant of Lemma~\ref{L2.3} which allows us to use dominated convergence theorem to pass to the limit in \eqref{EL2.4C}. We claim that
\begin{equation}\label{EL2.5B}
\esssup_{z\in B(x, \varepsilon)\cap K} \frac{1}{\sV(y, \eps)}\int_{B(y, \varepsilon)} G_T(z, \xi) d\mu(\xi)\leq \kappa_r\, G_T(x, y) 
\end{equation}
for all $\eps$ small and for some constant $\kappa_r$, depending on $r$.

Given a ball $B=B(x_0, R)$ in $\cX$ let $g^{B}$ be
the Green function in $B(x_0, R)$. From \cite[Theorem~3.12]{GH14} there exists a constant $K>1$, independent of $R, x_0$ such that
$$ \log\frac{RK}{d(x_0, y)} \lesssim g^{B}(x_0, y)\lesssim \log\frac{RK}{d(x_0, y)}, 
\quad \text{for all}\; y\in B(x_0, K^{-1}R)\setminus\{x_0\}.$$
It is also known that the Green function $g^B$ is  symmetric \cite[Lemma~5.2]{GH14}. Now fix $R=2rK$. By \cite[Lemma~3.2]{GT12}, we have $\lambda_{B(x_0, R)}\gtrsim \frac{1}{R^\beta}>0$.
Therefore, by \cite[Lemma~5.2]{GH14}, $g^{B}$ has Harnack property.  So we choose $\eps\in (0, r/3)$ and $r\geq d(x, y)\geq 3\eps$.
Then
\begin{align*}
\sup_{\xi\in B(y, \varepsilon)}\sup_{z\in B(x, \varepsilon)}G_T(z, \xi)& \lesssim \sup_{\xi\in B(y, \varepsilon)}\sup_{z\in B(x, \varepsilon)}(\log\frac{r}{d(z, \xi)} + 1)
\\
&\lesssim \sup_{\xi\in B(y, \varepsilon)}\sup_{z\in B(x, \varepsilon)} g^{B(\xi, R)}(z, \xi)
\\
&\leq C_H \sup_{\xi\in B(y, \varepsilon)} g^{B(\xi, R)}(x, \xi)
\\
&\leq C_H \sup_{\xi\in B(y, \varepsilon)} (\log\frac{r}{d(x, \xi)} + 1)
\\
&\lesssim C_H \sup_{\xi\in B(y, \varepsilon)} g^{B(x, R)}(\xi, x)
\\
&\leq C^2_H\, g^{B(x, R)}(y, x)\lesssim G_T(x, y).
\end{align*}
Thus \eqref{EL2.5B} holds when $r\geq d(x, y)\geq 3\eps$. There is nothing to prove if $d(x, y)\in( r, 2r)$. Now suppose $d(x, y)<3\eps$. Then from the proof of Lemma~\ref{L2.3} we obtain
\begin{align*}
h_{T, \eps}(z, y) &\lesssim \sV(y, 3\eps)^{-1} F(3\eps)+  \sV(y, \eps)^{-1}\Exp_z\left[\int_{B(y, \eps)} G_T(X_{\uptau}, \xi) d\mu(\xi)\right]
\\
&\lesssim 1 +  \sV(y, \eps)^{-1}\Exp_z\left[\int_{B(y, \eps)} G_T(X_{\uptau}, \xi) d\mu(\xi)\right].
\end{align*}
Note that $d(X_{\uptau}, \xi)\geq 3\eps$, and therefore, $G_T(X_{\uptau}, \xi)\lesssim G_T(x, y)$. Hence we arrive at \eqref{EL2.5B}. This completes the proof.
\end{proof}

\begin{remark}
 Note that the conditions in Lemma~\ref{L2.5} are satisfied by a large family of fractals. For instance, if $\cX$ is the unbounded Sierpi\'nski gasket in $\Rd, d\geq 2,$ then we have
$\alpha=\frac{\log(d+1)}{\log 2}$ and $\beta=\frac{\log(d+3)}{\log 2}$ (see \cite{BP, K01}). In case of Sierpi\'nski carpets when the \textit{spectral dimension}
$d_s$ is strictly larger than $2$ the conditions of Lemma~\ref{L2.5} are met \cite{BB99}.
\end{remark}

Our next result will be useful to get a bound on the decay of positive super-solutions.

\begin{lemma}\label{L2.6}
Grant the hypotheses of Lemma~\ref{L2.4}. Then the following holds.
\begin{itemize}
\item[(a)] For any $x\in\cX\setminus\cN$ with $d(o, x)\geq 2$ we have
$$\Prob_x(\uuptau_1<\infty)\gtrsim \frac{F (\theta)}{\sV(o, \theta)}, \quad \text{for any} \; \theta\geq d(o, x),$$
where $\uuptau_1$ denotes the hitting time to the ball $\overline{B(o, 1)}$.
\item[(b)] Suppose $x\in B(o, 2r)\setminus \left(\overline{B(o, r)}\cup\cN\right)$. There exists $c>0$, independent of $r$ and $x$, such that
$$\Prob_x(\uuptau_r<\infty)\geq c.$$
\end{itemize}
\end{lemma}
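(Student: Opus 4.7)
The plan is to establish (a) via a Green-function lower bound for the hitting probability, and to obtain (b) as a direct application of \cref{L2.4}. For (a), set $K = \overline{B(o,1)}$ and $\uuptau = \uuptau_1$. The strong Markov property applied at $\uuptau$ gives
\begin{equation*}
\int_K G(x,y)\,d\mu(y) \;=\; \Exp_x\!\int_0^\infty \Ind_K(X_s)\,ds \;=\; \Exp_x\Bigl[\Ind_{\{\uuptau<\infty\}}\Exp_{X_{\uuptau}}\!\int_0^\infty \Ind_K(X_s)\,ds\Bigr],
\end{equation*}
so that
\begin{equation*}
\Prob_x(\uuptau_1<\infty) \;\geq\; \frac{\int_K G(x,y)\,d\mu(y)}{\sup_{z\in K}\int_K G(z,y)\,d\mu(y)}.
\end{equation*}

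For the numerator, every $y\in K$ satisfies $d(x,y)\leq d(x,o)+1 \leq \theta+1$, so Lemma~\ref{L2.1}(b) applied with $r = \theta+1$ yields $G(x,y) \gtrsim F(\theta+1)/\sV(x,\theta+1)$. Because $d(x,o)\leq\theta$, the doubling estimate \eqref{1.2} gives $\sV(x,\theta+1)\lesssim \sV(o,\theta)$, and the monotonicity of $F$ upgrades the bound to $G(x,y) \gtrsim F(\theta)/\sV(o,\theta)$, uniformly in $y\in K$; so the numerator is at least a constant multiple of $\mu(K)F(\theta)/\sV(o,\theta)$. For the denominator, transience (implicit in the hypotheses of \cref{L2.4}) guarantees that $\int_K G(z,y)\,d\mu(y)$ is finite. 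To see this is uniformly bounded in $z\in K$ by a constant depending only on $o$, I would split $\int_K G(z,y)\,d\mu(y) = \Exp_z\!\int_0^\infty \Ind_K(X_t)\,dt$ at $T_0 = F(\eta')$: the contribution over $[0,T_0]$ is at most $T_0$, while on $[T_0,\infty)$ one combines the heat kernel upper bound \eqref{UE} with (VD) to obtain integrability at infinity. Combining the two estimates yields (a).

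For (b), apply \cref{L2.4} with starting point $x$ in place of $o$ and target $K = \overline{B(o,r)}$, using outer radius $3r$: since $d(o,x) < 2r$, every point of $K$ lies in $B(x,3r)$. The lemma then yields $\Prob_x(\uuptau_r \leq F(3\eta' r)) \gtrsim \mu(K)/\sV(x,3r)$; volume doubling gives $\sV(x,3r) \lesssim \sV(o,r) \lesssim \mu(K)$, so the ratio is bounded below by a universal constant, and (b) follows. The main obstacle is the uniform denominator estimate in (a): while transience makes $\int_K G(z,y)\,d\mu(y)$ pointwise finite, obtaining a bound independent of $z\in K$ (and of $\theta$) requires careful handling of the near-diagonal singularity of $G$ via the heat kernel upper bound and (VD).
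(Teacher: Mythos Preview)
Your argument is correct, and it reaches the result by a route that differs from the paper's in one interesting respect.

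For part (b), you apply the ``in particular'' conclusion of \cref{L2.4} directly with starting point $x$, target $K=\overline{B(o,r)}$ and outer radius $3r$; the paper instead returns to the finer estimate \eqref{EL2.4A} and bounds the numerator via \cref{L2.2}(b). Your route is shorter and makes the dependence on volume doubling transparent.

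For part (a), the paper again works through \eqref{EL2.4A}, applied with $K=\overline{B(o,1)}$ and radius $2+\theta$; the lower bound on $G_T(x,\cdot)$ comes from \cref{L2.1}(b), and the upper bound on $\sup_{z\in K}\int_K G_T(z,y)\,d\mu(y)$ is obtained by a spatial stopping-time decomposition at the exit from $B(z,3)$, using the mean exit bound $(\widetilde E_F)$ for the pre-exit piece and continuity of the full Green function $G$ (at distance $\geq 1$ from the diagonal) for the post-exit piece. You instead bypass \cref{L2.4} altogether and use the classical potential-theoretic bound $\Prob_x(\uuptau_1<\infty)\geq \int_K G(x,\cdot)\,d\mu\big/\sup_{z\in K}\int_K G(z,\cdot)\,d\mu$, handling the denominator by a \emph{time} splitting at $T_0=F(\eta')$: the short-time piece is trivially at most $T_0$, and the long-time piece is $\lesssim \mu(K)\int_{T_0}^\infty \sV(o,\sR(t))^{-1}\,dt$, finite by transience (this integral is the same one that appears in the proof of \cref{L2.4}). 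Both approaches need the same two ingredients---\cref{L2.1}(b) for the lower bound and control of $\sup_{z\in K}\int_K$ for the upper---but your version avoids the second-moment machinery of \cref{L2.4} entirely, at the cost of invoking the transience integral explicitly. The paper's stopping-time argument, on the other hand, reduces the tail to continuity of $G$ on a compact set away from the diagonal, which is perhaps conceptually cleaner but less elementary.
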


\begin{proof}
For (a) we fix $r=2+\theta\geq 2+d(o, x)$ and $K=\overline{B(o, 1)}$ in Lemma~\ref{L2.4}. Note that $x$ plays the role of $o$ here.
Then by Lemma~\ref{L2.1}(b) and \eqref{1.2} we have 
$$G_T(x, y)\gtrsim \frac{F(r)}{\sV(x, r)}\gtrsim \frac{F(\theta)}{\sV(x, \theta)}, \quad \forall\; y\in B(o, 1).$$
Also note that by \eqref{1.2},
$$\frac{\sV(x, \theta)}{\sV(o, \theta)}\leq C\left(\frac{\theta + \D(o, x)}{\theta}\right)^{\alpha_2}= C 2^{\alpha_2}.$$
Let $\uptau_{3}$ be the exit time from the ball $B(z, 3)\supset K$ where $z\in K=\overline{B(o, 1)}$. $G$ being continuous, we also have
$$\sup_{y\in K}\sup_{z'\in\partial B(z, 3), z\in K} G_T(z', y)\leq \sup_{y\in K}\sup_{z'\in\partial B(z, 3), z\in K} G(z', y)\leq C'.$$
We write
\begin{align*}
\int_{K} G_T(z, y) d\mu(y) &= \Exp_z\left[\int_0^T \Ind_{K}(X_t) dt\right]
\\
&= \Exp_z\left[\Ind_{\{T\leq \uptau_{3}\}}\int_0^T \Ind_{K}(X_t) dt\right] + \Exp_z\left[\Ind_{\{\uptau_{3}<T\}}\int_0^T \Ind_{K}(X_t) dt\right]
\\
&\leq \Exp_z[\uptau_{3}] + \Exp_x\left[\Ind_{\{\uptau_{3}<T\}}\int_{\uptau_{3}}^T \Ind_{K}(X_t) dt\right]
\\
&\lesssim F(3) + \Exp_z\left[\Ind_{\{\uptau_{3}<T\}}\int_{K} G_T(X_{\uptau_{3}}, y) d\mu(y)\right]
\\
& \lesssim F(3) + C' \sV(o, 1),
\end{align*}
where we used $(\widetilde{E}_F)$.
Putting these estimates in \eqref{EL2.4A} we get (a).

Now we come to (b). We fix $K=\overline{B(o, r)}$, $B=B(x, 4r)$, and $T=F(\eta' 4r)$. By Lemma~\ref{L2.1}(b) and \eqref{1.2} we have 
$$G_T(x, y)\gtrsim \frac{F(4 r)}{\sV(x, 4r)} \gtrsim \frac{F(4 r)}{\sV(o, 4r)}\gtrsim \frac{F(r)}{\sV(o, r)} \quad \forall\; y\in K.$$
By Lemma~\ref{L2.2}(b) and \eqref{beta},
\begin{align*}
\sup_{z\in K} \int_{K} G_T(z, y) d\mu(y) &\leq \sup_{z\in K} \int_{B(x, 4r)} G_T(z, y) d\mu(y) \lesssim F(4r)\lesssim F(r).
\end{align*}
This, of course, implies 
$$\sup_{z\in K}\; \int_{K} G_T(z, y) d\nu(y)\lesssim \frac{F(r)}{\sV(o, r)}.$$
Putting these estimates into \eqref{EL2.4A} we get (b).
\end{proof}

\section{Generalized Lieb's inequality}\label{Sec-GL}
In this section we prove several estimates of eigenvalues and nodal domains using the estimates we obtained in the previous section. We fix a domain $\Omega \subset \cX$ and
our central object of study are the non-zero solutions $u$ of (see \eqref{E2.1})
$$\cE(u, v) + (uV, v)=0\quad \text{for all}\; v\in\cF^0(\Omega).$$
Recall that our standing assumptions are (VD), (RVD), (EHI) and $(\widetilde{E}_F)$.
Also recall from Section~\ref{eigensol} that we always assume $u$ to have a Feynman-Kac representation i.e.
\begin{equation}\label{E3.1}
u(x)=T^{V, \Omega}_t u(x)=\Exp_x\left[ e^{-\int_0^t V(X_s) ds} u(X_t) \Ind_{\{t<\uptau\}}\right], \quad t\geq 0, \quad x\in\Omega\setminus\cN,
\end{equation}
where $\uptau=\uptau_\Omega$ denote the exit time from $\Omega$.

We begin with the Feynman-Kac representation of the eigenfunction in a smaller sub-domain.
\begin{lemma}\label{L3.1}
Let $u$ be a bounded, continuous function in $\Omega$ satisfying \eqref{E3.1}.
Let $\Omega_1\subset\Omega$ be open and $\uptau_1$ be the exit time from $\Omega_1$. Then we have for $x\in\Omega_1\setminus\cN$
$$u(x)=\Exp_x\left[e^{-\int_0^{t\wedge\uptau_1} V(X_s)ds} u(X_{t\wedge\uptau_1})\Ind_{\{t\wedge\uptau_1<\uptau_\Omega\}}\right].$$
\end{lemma}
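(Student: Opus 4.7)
The strategy is to decompose the given Feynman-Kac representation of $u$ by conditioning on whether the trajectory has exited $\Omega_1$ by time $t$, and then invoke the strong Markov property at $\uptau_1$ to reduce the ``after exit'' piece to a Feynman-Kac integral starting from $X_{\uptau_1}$, which by hypothesis equals $u(X_{\uptau_1})$.

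\textbf{Step 1 (Splitting).} Starting from
\[
u(x) \;=\; \Exp_x\!\left[e^{-\int_0^t V(X_s)\,ds}\, u(X_t)\, \Ind_{\{t<\uptau_\Omega\}}\right],
\]
write $\Ind_{\{t<\uptau_\Omega\}} = \Ind_{\{t\leq \uptau_1,\, t<\uptau_\Omega\}} + \Ind_{\{\uptau_1 < t,\, t<\uptau_\Omega\}}$. On $\{t\leq \uptau_1\}$ we have $t\wedge\uptau_1 = t$ and $X_{t\wedge\uptau_1}=X_t$, so the first piece is already in the desired form
\[
\Exp_x\!\left[e^{-\int_0^{t\wedge\uptau_1} V(X_s)\,ds}\, u(X_{t\wedge\uptau_1})\, \Ind_{\{t\wedge\uptau_1<\uptau_\Omega,\,t\leq \uptau_1\}}\right].
\]

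\textbf{Step 2 (Strong Markov at $\uptau_1$).} On $\{\uptau_1 < t < \uptau_\Omega\}$, write the integral $\int_0^t V(X_s)\,ds$ as $\int_0^{\uptau_1} + \int_{\uptau_1}^t$. Since $\uptau_1$ is a stopping time and $\uptau_\Omega = \uptau_1 + \uptau_\Omega\circ\theta_{\uptau_1}$ on $\{\uptau_1<\uptau_\Omega\}$ (using $\Omega_1\subset\Omega$ and the additivity of exit times under the shift $\theta$), the strong Markov property of the Hunt process gives
\begin{align*}
&\Exp_x\!\left[e^{-\int_0^t V(X_s)ds}u(X_t)\Ind_{\{\uptau_1 < t<\uptau_\Omega\}}\right] \\
&\qquad =\; \Exp_x\!\left[e^{-\int_0^{\uptau_1} V(X_s)ds}\Ind_{\{\uptau_1<t\wedge\uptau_\Omega\}}\,\Exp_{X_{\uptau_1}}\!\!\left[e^{-\int_0^{t-\uptau_1}V(X_s)ds}u(X_{t-\uptau_1})\Ind_{\{t-\uptau_1<\uptau_\Omega\}}\right]\right].
\end{align*}

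\textbf{Step 3 (Applying the hypothesis at $X_{\uptau_1}$).} On $\{\uptau_1 < \uptau_\Omega\}$, the point $X_{\uptau_1}$ lies in $\Omega$; since the Hunt process never hits the properly exceptional set $\cN$ under $\Prob_x$ for $x\notin\cN$, almost surely $X_{\uptau_1}\in\Omega\setminus\cN$. The hypothesis \eqref{E3.1} applied at the (random) starting point $X_{\uptau_1}$ with elapsed time $t-\uptau_1\geq 0$ yields
\[
\Exp_{X_{\uptau_1}}\!\!\left[e^{-\int_0^{t-\uptau_1}V(X_s)ds}u(X_{t-\uptau_1})\Ind_{\{t-\uptau_1<\uptau_\Omega\}}\right] \;=\; u(X_{\uptau_1}).
\]
Substituting back and noting that on $\{\uptau_1 < t\}$ we have $t\wedge\uptau_1 = \uptau_1$ and $\{t\wedge\uptau_1 < \uptau_\Omega\} = \{\uptau_1<\uptau_\Omega\}$, this second piece becomes
\[
\Exp_x\!\left[e^{-\int_0^{t\wedge\uptau_1} V(X_s)\,ds}\, u(X_{t\wedge\uptau_1})\, \Ind_{\{t\wedge\uptau_1<\uptau_\Omega,\,\uptau_1 < t\}}\right].
\]
Adding the two pieces from Steps 1 and 3 gives the claimed identity.

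\textbf{Main obstacle.} The only delicate point is the measurability/regularity housekeeping needed in Step 2--3: verifying that the strong Markov property can be applied with the random time shift (so the conditional expectation at $X_{\uptau_1}$ is well-defined as a measurable function), and justifying that one may plug in $X_{\uptau_1}$ into the hypothesis $u(y)=T^{V,\Omega}_{t-\uptau_1}u(y)$. The boundedness and continuity of $u$, boundedness of $V$ (which makes the exponential factor bounded and the Feynman-Kac kernel well-behaved), and the fact that the Hunt process associated with a regular Dirichlet form avoids properly exceptional sets, together make this routine; no further analytic input is required.
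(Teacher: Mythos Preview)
Your proof is correct and complete. It takes a genuinely different route from the paper's argument, however. The paper proceeds by showing that
\[
A_t \;=\; e^{-\int_0^t V(X_s)\,ds}\, u(X_t)\,\Ind_{\{t<\uptau_\Omega\}}
\]
is a $(\sF_{t\wedge\uptau_\Omega},\Prob_x)$-martingale, verifying $\Exp_x[A_t\mid\sF_{s\wedge\uptau_\Omega}]=A_s$ via the ordinary Markov property at the deterministic time $s$, and then simply invoking Doob's optional sampling theorem at the bounded stopping time $t\wedge\uptau_1$. You instead split on $\{t\le\uptau_1\}$ versus $\{\uptau_1<t\}$ and apply the strong Markov property directly at $\uptau_1$, plugging the hypothesis \eqref{E3.1} back in at the random point $X_{\uptau_1}$. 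The paper's packaging is slightly more modular (once the martingale is established, optional stopping handles every stopping time at once), while your direct computation is more explicit about exactly where and how \eqref{E3.1} is used and sidesteps the filtration bookkeeping. Either way the analytic content is the same, and your remarks on the measurability housekeeping are appropriate.
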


\begin{proof}
Let $(X_t, \Prob_x, \sF_t)$ be the underlying Hunt processes with right continuous filtration $\sF_t$. Fix $x\in\Omega_1\setminus\cN$
and define
$$A_t= e^{-\int_0^t V(X_s) ds} u(X_t)\Ind_{\{t<\uptau_\Omega\}}, \quad t\geq 0.$$
We claim that $\{A_t, \sF_{t\wedge\uptau}, \Prob_x\}$ is a martingale where $\uptau=\uptau_\Omega$. In particular, note that for $s<t$ and using Markov property we have
\begin{align*}
\Exp_x\left[e^{-\int_0^t V(X_s) ds} u(X_t)\Ind_{\{t<\uptau\}}\Big| \sF_{s\wedge\uptau}\right] & = \Exp_x\left[e^{-\int_0^t V(X_s) ds} u(X_t)\Ind_{\{s<\uptau\}}\Ind_{\{t<\uptau\}}\Big| \sF_{s\wedge\uptau}\right]
\\
&= e^{-\int_0^s V(X_p) dp} \Ind_{\{s<\uptau\}}\Exp_{X_s}\left[e^{-\int_0^{t-s} V(X_p) dp}u(X_{t-s})\Ind_{\{t-s<\uptau\}}\right]
\\
&= e^{-\int_0^s V(X_p) dp} u(X_s) \Ind_{\{s<\uptau\}}=A_s
\end{align*}
Therefore, the claimed result follows from Doob's optimal  stopping theorem.
\end{proof}

The next result on the eigenvalue bound is well-known \cite{GH14, HeSC01}.
\begin{lemma}\label{L3.2}
Let $B$ be a ball of radius $r$ with center at $x$ and let $\lambda_B$ be its principal eigenvalue. Then
$$\lambda_B\lesssim \frac{1}{F(r)}\quad \text{for all}\; r>0.$$
\end{lemma}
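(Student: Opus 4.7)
The plan is to produce an explicit test function for the Rayleigh quotient characterization
\[
\lambda_B \;=\; \inf_{f \in \cF^0(B)\setminus\{0\}} \frac{\cE(f,f)}{\|f\|_2^2}.
\]
The natural choice is the mean exit time, $u(y) \df \Exp_y[\uptau_B]$, $y \in B$. A standard fact from Dirichlet form theory (see, e.g., \cite{GH14}) is that $u \in \cF^0(B)$ and that $u$ is the weak solution of the Poisson problem with right-hand side $1$, that is,
\[
\cE(u, v) \;=\; \int_B v \, \D\mu \qquad \forall\, v \in \cF^0(B).
\]
Taking $v = u$ immediately gives $\cE(u,u) = \int_B u \, \D\mu$.

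The core analytic step is to pin $u$ between two multiples of $F(r)$ on a macroscopic portion of $B$. Domain monotonicity of the exit time combined with the mean exit time estimate $(\widetilde{E}_F)$ handles both directions. For $y \in B(x, r/2) \setminus \cN$, the inclusion $B(y, r/2) \subset B$ gives
\[
u(y) \;\geq\; \Exp_y[\uptau_{B(y, r/2)}] \;\gtrsim\; F(r/2) \;\simeq\; F(r),
\]
using \eqref{beta}, while for $y \in B \setminus \cN$ arbitrary, $B \subset B(y, 2r)$ together with $(\widetilde{E}_F)$ and \eqref{beta} yields $u(y) \lesssim F(2r) \simeq F(r)$.

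Assembling the Rayleigh quotient with these bounds and volume doubling:
\[
\lambda_B \;\leq\; \frac{\cE(u,u)}{\|u\|_2^2} \;=\; \frac{\int_B u \, \D\mu}{\int_B u^2 \, \D\mu} \;\leq\; \frac{C\, F(r)\, \sV(x, r)}{c\, F(r)^2\, \sV(x, r/2)} \;\lesssim\; \frac{1}{F(r)},
\]
where the numerator uses the uniform upper bound $u \lesssim F(r)$ on $B$, the denominator uses the lower bound $u \gtrsim F(r)$ restricted to $B(x, r/2)$, and the last step invokes (VD) to absorb $\sV(x, r)/\sV(x, r/2)$ into the implicit constant.

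The only nontrivial input is the identification of $u$ as an element of $\cF^0(B)$ satisfying the Poisson identity above; this is routine in the strongly local regular Dirichlet form setting and is exactly where $(\widetilde{E}_F)$ enters as a quantitative surrogate for integrability of $u$. Everything else is bookkeeping with (VD) and \eqref{beta}, so I expect no essential obstacle beyond quoting the variational identity for $u$ from \cite{GH14,HeSC01}.
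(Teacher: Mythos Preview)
Your proof is correct but follows a different route from the paper's. The paper uses the equilibrium potential $e_B$ for the condenser $(B, B(x,Kr))$ as its test function, so that $\cE(e_B,e_B)=\cp(B,B(x,Kr))$ and $(e_B,e_B)\ge \mu(B)$; it then invokes the resistance estimate $\cp(B,B(x,Kr))\simeq \mu(B)/F(r)$ from \cite[Theorem~3.12]{GH14} to conclude. Your approach instead plugs the mean exit time $u=G_B 1$ into the Rayleigh quotient and appeals directly to $(\widetilde{E}_F)$ together with (VD) and \eqref{beta}. The paper's argument is shorter on the page but leans on the resistance condition, which is itself one of the deeper equivalent characterizations in \cite{GH14}; your argument is more self-contained, using only the hypothesis $(\widetilde{E}_F)$ and the identity $\cE(G_B 1,v)=\int_B v\,\D\mu$, at the cost of needing to justify that $G_B 1\in\cF^0(B)$. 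That justification is indeed routine here (the upper bound in $(\widetilde{E}_F)$ gives $\|G_B\|_{L^\infty\to L^\infty}\lesssim F(r)<\infty$, whence $\lambda_0(B)>0$ and the resolvent argument goes through), so there is no gap.
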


\begin{proof}
Recall that there exists a function $e_B\in\cF(B(x, Kr))$ such that $e_B=1$ almost surely on $B$ and 
$$\cp(B, B(x, Kr))=\cE(e_B, e_B).$$
Therefore,
$$\lambda_B\leq \frac{\cE(e_B, e_B)}{(e_B, e_B)}\leq \frac{\cp(B, B(x, Kr))}{\mu(B)}.$$
On the other hand, by \cite[Theorem~3.12]{GH14}, the resistance condition holds which implies
$$\cp(B, B(x, Kr))\simeq \frac{\mu(B)}{F(r)}.$$
This proves the result.
\end{proof}

Using Lemma~\ref{L3.1} and ~\ref{L3.2}, we establish the following \textit{wavelength density} result.

\begin{theorem}\label{T3.1}
There exists a universal constant $C$, not depending on $(\Omega, \lambda)$ such that for any bounded, continuous Dirichlet eigenpair $(u, \lambda)$, $\lambda>0$,
the eigenfunction $u$ must vanish somewhere in any ball $B(x, \sR(C\lambda^{-1}))$ that is contained in $\Omega$.
\end{theorem}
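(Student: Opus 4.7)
The plan is to argue by contradiction. Suppose $(u,\lambda)$ is a Dirichlet eigenpair with $\lambda>0$ on $\Omega$ and that $u$ is nowhere zero in some ball $B = B(x,r) \subset \Omega$. Since $u$ is continuous on $\Omega$ and $B$ is connected, $u$ has constant sign on $B$; replacing $u$ by $-u$ if necessary, assume $u>0$ in $B$. The goal is to show that this forces $\lambda \leq \lambda_B$, where $\lambda_B$ is the principal Dirichlet eigenvalue of $(\cE,\cF^0(B))$, because then Lemma~\ref{L3.2} gives $\lambda \lesssim 1/F(r)$, i.e.\ $r \leq \sR(C\lambda^{-1})$ for a universal constant $C$. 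The contrapositive is exactly the claim.

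To push through the comparison $\lambda \leq \lambda_B$, I would apply Lemma~\ref{L3.1} with $V=-\lambda$, $\Omega_1 = B$, and then split at $\uptau_B$. For $y\in B\setminus\cN$ this gives
\begin{equation*}
u(y) \;=\; e^{\lambda t}\,\Exp_y\!\bigl[u(X_t)\Ind_{\{t<\uptau_B\}}\bigr] \;+\; \Exp_y\!\bigl[e^{\lambda\uptau_B}\,u(X_{\uptau_B})\Ind_{\{\uptau_B \leq t,\; \uptau_B<\uptau_\Omega\}}\bigr].
\end{equation*}
Because $u$ is continuous and strictly positive on the open ball $B$, it is non-negative on $\overline{B}$, so $u(X_{\uptau_B})\geq 0$ on $\{\uptau_B<\uptau_\Omega\}$ (shrinking $r$ slightly, if necessary, to ensure $\overline{B}\subset\Omega$). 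Dropping the non-negative second term,
\begin{equation*}
u(y) \;\geq\; e^{\lambda t}\,\Exp_y\!\bigl[u(X_t)\Ind_{\{t<\uptau_B\}}\bigr], \qquad y\in B\setminus\cN.
\end{equation*}

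Now let $\phi>0$ be the $L^2$-normalized principal Dirichlet eigenfunction of $(\cE,\cF^0(B))$, so that $\Exp_y[\phi(X_t)\Ind_{\{t<\uptau_B\}}] = e^{-\lambda_B t}\phi(y)$ by the standard Feynman--Kac representation of $T^{0,B}_t\phi$. Multiplying the inequality above by $\phi(y)$, integrating over $B$, and using the $\mu$-symmetry of the killed semigroup $T^{0,B}_t$ (from Section~\ref{eigensol}) yields
\begin{equation*}
\int_B u\phi\,d\mu \;\geq\; e^{\lambda t}\int_B \phi(y)\,\Exp_y\!\bigl[u(X_t)\Ind_{\{t<\uptau_B\}}\bigr]\,d\mu(y) \;=\; e^{(\lambda-\lambda_B)t}\int_B u\phi\,d\mu,
\end{equation*}
for every $t>0$. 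Since both $u$ and $\phi$ are strictly positive on $B$, the integral $\int_B u\phi\,d\mu$ is positive, and letting $t\to\infty$ forces $\lambda\leq\lambda_B$. Combined with Lemma~\ref{L3.2}, we obtain $\lambda\leq C_0/F(r)$ for a universal $C_0$, i.e.\ $r\leq \sR(C_0/\lambda)$, contradicting the assumption that $r = \sR(C\lambda^{-1})$ for any $C>C_0$.

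The main obstacle I anticipate is the symmetry/duality step that converts $\int_B \phi\,\Exp_\cdot[u(X_t)\Ind_{t<\uptau_B}]\,d\mu$ into $e^{-\lambda_B t}\int_B u\phi\,d\mu$; this requires that the inequality $u(y)\geq e^{\lambda t}\Exp_y[u(X_t)\Ind_{t<\uptau_B}]$ holds $\mu$-a.e.\ (not just off the properly exceptional set $\cN$) and that $u|_B \in L^2(B)$ so the pairing with $\phi$ and the use of self-adjointness of $T^{0,B}_t$ are justified. Boundedness of $u$ and finiteness of $\mu(B)$ make $u|_B \in L^2(B)$ immediate, and $\cN$ is $\mu$-null, so both issues are technical rather than substantive.
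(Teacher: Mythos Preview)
Your argument is correct and shares the same skeleton as the paper's proof: argue by contradiction, show that if $u$ keeps a fixed sign on a ball $B\subset\Omega$ then $\lambda\le\lambda_B$, and conclude via Lemma~\ref{L3.2}. The difference is in how the inequality $\lambda\le\lambda_B$ is obtained. The paper works pointwise at a single $x_0\in B\setminus\cN$: from the Feynman--Kac representation of the principal eigenfunction $u_B$ it extracts $\liminf_{t\to\infty}\tfrac{1}{t}\log\Prob_{x_0}(t<\uptau_B)\ge -\lambda_B$, while from Lemma~\ref{L3.1} applied to $u$ together with the bound $u(X_t)\ge \min_{\overline B}u>0$ on $\{t<\uptau_B\}$ it obtains $\lambda+\liminf_{t\to\infty}\tfrac{1}{t}\log\Prob_{x_0}(t<\uptau_B)\le 0$; combining gives $\lambda\le\lambda_B$. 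You instead integrate the inequality $u\ge e^{\lambda t}T^{0,B}_t u$ against the principal eigenfunction $\phi$ and use the $\mu$-symmetry of the killed semigroup to turn it into $\int_B u\phi\,d\mu\ge e^{(\lambda-\lambda_B)t}\int_B u\phi\,d\mu$. Your route is a clean duality argument that avoids having to invoke a strictly positive lower bound $\min_{\overline B}u$; it only needs $u\ge 0$ on $\partial B$ to drop the boundary term and $u|_B\in L^2(B)$ for the pairing, both of which you justify. The paper's route is slightly more probabilistic in flavor and needs no integrability or self-adjointness, only the pointwise representation and continuity of $u$ on $\overline B$.
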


\begin{proof}
We begin with a $C$ and later we fix the choice of $C$. Let $B(x, \sR(C\lambda^{-1}))\Subset\Omega$ be such that $u$ is strictly positive in 
$\overline{B(x, \sR(C\lambda^{-1}))}$. Let $\lambda_B$ be the principal eigenvalue in the ball with principal eigenfunction $u_B$ i.e.,
$$u_B(x)=\Exp_x[e^{\lambda_B t}u_B(X_t)\Ind_{\{t<\uptau_B\}}].$$
Thus we can find a point $x_0$ in $B\setminus\cN$ such that $u_B(x_0)>0$ and using the above representation we arrive at
\begin{equation}\label{T3.1A}
\lambda_B+ \liminf_{t\to\infty}\frac{1}{t}\Prob_{x_0}(t<\uptau_B)\geq 0.
\end{equation}
Here $\uptau_B$ denotes the exit time from $B$. Applying Lemma ~\ref{L3.1} we obtain
$$u(x_0)\geq \Exp_{x_0}\left[e^{\lambda t} u(X_{t})\Ind_{\{t<\uptau_B\}}\right]\geq [\min_{\overline{B}} u]\, e^{\lambda t } \Prob_{x_0} (t<\uptau_B).$$
Taking logarithm on both sides, dividing by $t$, and letting $t\to\infty$ and using \eqref{T3.1A} we get
$$0\geq \lambda + \liminf_{t\to\infty}\frac{1}{t}\Prob_{x_0}(t<\uptau_B)\geq \lambda-\lambda_B.$$
Now apply Lemma~\ref{L3.2} to find
$$\lambda\leq C_1 \frac{1}{F(\sR(C\lambda^{-1}))}\leq \frac{C_1}{C}\lambda.$$
To get a contradiction choose $C>C_1$. Hence the proof.
\end{proof}

From Theorem~\ref{T3.1} we note that the nodal domains of the eigenfunction corresponding to the eigenvalue $\lambda$ can not have inradius
$\sR(C\lambda^{-1})$. However, one can use the Faber-Krahn inequality to give a lower bound on the measure of the nodal domains. In this spirit
we generalize a famous result of Lieb \cite{L83} in our next main result.

Recall that $\eta$ is the parameter in the near-diagonal lower bound \eqref{NLE}.
\begin{theorem}\label{T3.2}
Suppose $V$ is bounded and $u\neq 0$ satisfies \eqref{E3.1}. Then for any given $\varepsilon\in (0, 1)$,
there exists a constant $\kappa >0$ such that for $r=\frac{\eta}{2}\sR(\kappa \norm{V^-}_{L^\infty(\Omega)}^{-1})$ and for some point $o\in \Omega$ we have 
$$\mu(\Omega \cap B(o, r))\geq (1-\varepsilon)\mu(B(o, r)).$$
In addition, if we assume $u$ in $C(\overline{\Omega})$ and $\cN=\emptyset$ then we can choose $o$ to be any maximizer of $|u|$.
\end{theorem}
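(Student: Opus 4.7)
The plan is to combine the Feynman--Kac representation \eqref{E3.1} with the hitting-time lower bound from Lemma~\ref{L2.4} in a contradiction argument, following Lieb's original strategy. Set $M := \sup_\Omega |u|$, fix $\sigma \in (0,1)$ (to be sent to $0$ at the end), and choose $o \in \Omega \setminus \cN$ with $|u(o)| \geq (1-\sigma)M$. Such an $o$ exists: when $u \in C(\overline{\Omega})$ and $\cN = \emptyset$, take an actual maximizer of $|u|$; in general, the set $\{|u| \geq (1-\sigma)M\}$ has positive $\mu$-measure and hence meets $\Omega \setminus \cN$. For $r = \frac{\eta}{2}\sR(\kappa \|V^-\|_\infty^{-1})$ one has $T := F(\eta' r) = \kappa\,\|V^-\|_\infty^{-1}$. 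Plugging $t = T$ into \eqref{E3.1} and bounding $e^{-\int_0^T V(X_s)\,\D s} \leq e^{\|V^-\|_\infty T} = e^\kappa$ gives
$$(1-\sigma)M \;\leq\; |u(o)| \;\leq\; e^\kappa M\,\Prob_o(T < \uptau_\Omega),$$
so
$$\Prob_o(\uptau_\Omega \leq T) \;\leq\; 1 - (1-\sigma)e^{-\kappa}.$$

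For the matching lower bound, argue by contradiction: assume $\mu(\Omega \cap B(o,r)) < (1-\varepsilon)\,\sV(o,r)$, so that $\mu(B(o,r) \setminus \Omega) > \varepsilon\,\sV(o,r)$. Writing $B(o,r) = \bigcup_{r' < r} B(o,r')$ as an increasing union and using continuity of $\mu$, choose $r' < r$ close enough to $r$ that $K := \overline{B(o,r')} \setminus \Omega$ is a compact subset of $B(o,r)$, not containing $o \in \Omega$, and still satisfies $\mu(K) > \frac{\varepsilon}{2}\,\sV(o,r)$. Since the Hunt process has continuous paths and $K \cap \Omega = \emptyset$, any path from $o$ that reaches $K$ must first exit $\Omega$, i.e., $\uptau_\Omega \leq \uuptau_K$. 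Lemma~\ref{L2.4} applied to this $K$ therefore gives
$$\Prob_o(\uptau_\Omega \leq T) \;\geq\; \Prob_o(\uuptau_K \leq T) \;\geq\; \frac{C_1\,\varepsilon}{2}.$$

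Combining the two bounds yields $\frac{C_1 \varepsilon}{2} \leq 1 - (1-\sigma)e^{-\kappa}$; sending $\sigma \to 0$ gives $\frac{C_1 \varepsilon}{2} \leq 1 - e^{-\kappa}$. By fixing $\kappa$ at the outset small enough that $1 - e^{-\kappa} < \frac{C_1\varepsilon}{2}$, this produces a contradiction and yields the theorem, with $r = \frac{\eta}{2}\sR(\kappa\,\|V^-\|_\infty^{-1})$. The main obstacle I expect is purely bookkeeping: converting the measure-theoretic shortfall $\mu(B(o,r) \setminus \Omega) > \varepsilon\,\sV(o,r)$ into a compact $K \subset B(o,r)$ of comparable measure so that Lemma~\ref{L2.4} strictly applies, and (when $|u|$ does not attain its supremum) locating a near-maximizer in $\Omega \setminus \cN$. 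Neither step is deep, but both require care to keep the final constants clean and to handle the properly exceptional set without losing quantitative information.
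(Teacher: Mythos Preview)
Your argument is correct and follows the same overall strategy as the paper: bound $\Prob_o(\uptau_\Omega\le T)$ from above via the Feynman--Kac representation, from below via Lemma~\ref{L2.4}, and force a contradiction. The one genuine difference is in how the upper bound (and hence the point $o$) is obtained. The paper works in $L^2$: from $\int_\Omega u^2\,d\mu=\int_\Omega(T^{V,\Omega}_t u)^2\,d\mu$ and Cauchy--Schwarz it extracts $\esssup_{x}\Prob_x(t<\uptau_\Omega)\ge e^{-2t\Theta}$, and then picks $o$ nearly realizing the essential supremum. You instead work in $L^\infty$, evaluating \eqref{E3.1} at a near-maximizer of $|u|$. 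Your route is shorter and feeds directly into the ``In addition'' clause; the paper's $L^2$ route avoids any a~priori appeal to $\sup_\Omega|u|<\infty$. (That boundedness does follow here from \eqref{E3.1} together with the heat-kernel upper bound \eqref{UE}, but you should state it rather than use it silently.) One cosmetic point: rather than ``send $\sigma\to0$'' after $o$ has already been chosen depending on $\sigma$, fix $\kappa$ with $1-e^{-\kappa}<\tfrac{C_1\varepsilon}{2}$, then fix $\sigma$ small enough that $1-(1-\sigma)e^{-\kappa}<\tfrac{C_1\varepsilon}{2}$, and only then choose $o$; this is what your argument actually needs.
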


\begin{proof}
By definition,
$$T^{V, \Omega}_t u(x) =\Exp_x\left[ e^{\int_0^t -V(X_s) ds} u(X_t) \Ind_{\{t<\uptau_\Omega \}}\right]=u(x), \quad \text{a.e. in}\; x.$$
Therefore, denoting $\Theta=\norm{V^-}_{L^\infty(\Omega)}$ we obtain
\begin{align*}
\int_\Omega u^2 d\mu &= \int_{\Omega} (T^{V, \Omega}_tu(x))^2 d\mu(x)
\\
&\leq \int_{\Omega} \Exp_x\left[ e^{\int_0^t -2V(X_s) ds} \Ind_{\{t<\uptau_\Omega \}}\right] \Exp_x[u^2(X_t)] d\mu(x)
\\
&\leq e^{2t\Theta}\esssup_{\Omega\setminus\cN}\Prob_x(t<\uptau_\Omega) \int_{\Omega} \Exp_x[u^2(X_t)] d\mu(x)
\\
&\leq e^{2t\Theta}\esssup_{\Omega\setminus\cN}\Prob_x(t<\uptau_\Omega) \int_{\Omega}\int_{\cX} u^2(y)p_t(x,y) d\mu(y)\, d\mu(x)
\\
&\leq e^{2t\Theta} \esssup_{\Omega\setminus\cN}\Prob_x(t<\uptau_\Omega) \int_\Omega u^2(y) d\mu(y).
\end{align*}
Hence we obtain
$$1\leq e^{2t\Theta} \esssup_{\Omega\setminus\cN}\Prob_x(t<\uptau_\Omega),$$
for all $t>0$, which in turn implies,
$$e^{-2t}\leq \esssup_{\Omega\setminus\cN} \Prob_x(t\Theta^{-1}<\uptau_\Omega).$$
Pick any $\delta>1$. From above we can find a point $o\in \Omega\setminus\cN$ satisfying 
$$\frac{1}{\delta}e^{-2t} \leq \Prob_o(t\Theta^{-1}<\uptau_\Omega),$$
hence
\begin{equation}\label{ET3.2A}
 \Prob_o(t \Theta^{-1} \geq \uptau_\Omega)\leq 1-\frac{1}{\delta}e^{-2t}.
\end{equation}
Note that the choice of $o$ depends on $t$ and $\delta$. We shall fix our choice of $t$ and $\delta$ later, depending on $\eps$.
Let $r= \frac{\eta}{2}\sR(t\Theta^{-1})$. Let $E=\Omega^c\cap B(o, r)$ and $T=t \Theta^{-1}$. We claim that, for some universal constant $\kappa_1$, independent of
$t,\, \delta,\, o,\, V,\, \Omega$, we have
\begin{equation}\label{ET3.2B}
\frac{\mu(E)}{\mu(B(o, r))}\leq \kappa_1 \Prob_{o}(\uptau_\Omega\leq T).
\end{equation}
If $\mu(E)=0$ then there is nothing to prove. So we assume $\mu(E)>0$ and choose $K\subset E$ compact, such that $2\mu(K)\geq \mu(E)$. Then \eqref{ET3.2B} follows
from Lemma~\ref{L2.4}.
Combining \eqref{ET3.2B} with \eqref{ET3.2A} we get
$$\frac{\mu(E)}{\mu(B(o, r))}\leq \kappa_1\left(1-\frac{1}{\delta}e^{-2t}\right), \quad t>0\,.$$
Now we choose $t=t_\eps$ and $\delta=\delta_\eps$ suitably so that 
$$\kappa_1 \left(1-\frac{1}{\delta_\eps}e^{-2t_\eps} \right)<\eps.$$
This yields
$$\mu(\Omega\cap B(o, r))\geq (1-\eps)\mu(B(o, r)).$$
\end{proof}

In the remaining part of this section we will generalize Theorem~\ref{T3.2} to a more general class of potentials, possibly singular.
To do so, we restrict ourselves to the case where $F(r)=r^\beta$ and $\sV(x, r)\simeq r^\alpha$.
We need Mittag-Leffler functions \cite{GKMR}
$$
\ML_{\ell}(x)=\sum_{k=0}^\infty \frac{ x^k}{\Gamma(1+\ell k)},\quad x\geq 0.
$$
We begin with the following Khasminiskii type lemma.

\begin{lemma}\label{L3.3}
Let $F(r)=r^\beta$ and $\sV(x, r)\simeq r^\alpha$.
Suppose that $p>\frac{\alpha}{\beta}\vee1$. Then for any $V\geq 0$, supported in $\Omega$ we have the following estimate
$$\sup_{x\in\Omega\setminus \cN}\Exp_x\left[e^{\int_0^t V(X_s) ds}\right]\;\leq\;  \ML_\varrho(\kappa_p \norm{V}_{p, \Omega} t^\varrho),$$
for $\varrho=1-\frac{\alpha}{\beta p}$, where $\kappa_p>1$ is a constant not depending on $\Omega$. In particular,
$$\sup_{x\in\Omega\setminus \cN}\Exp_x\left[e^{\int_0^t V(X_s) ds}\right]\;\leq\; \kappa_p e^{\left(\kappa_p \norm{V}_{p, \Omega}\right)^{\nicefrac{1}{\varrho}} t}.$$
\end{lemma}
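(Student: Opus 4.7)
The plan is a Khasminskii-type argument built around a Taylor expansion of the exponential. Writing
$$\Exp_x\!\left[e^{\int_0^t V(X_s)\D s}\right]=\sum_{k=0}^\infty \frac{u_k(t,x)}{k!},\qquad u_k(t,x):=\Exp_x\!\left[\Bigl(\int_0^t V(X_s)\D s\Bigr)^{\!k}\right],$$
the whole lemma reduces to a uniform-in-$x$ bound of the form $u_k(t,x)\leq A_k t^{k\varrho}$ and then summing. The first step I would carry out is the one-step bound $\Exp_x V(X_s)\leq C\norm{V}_{p,\Omega}s^{-(1-\varrho)}$. By Hölder with conjugate exponent $p'$ one has $\Exp_x V(X_s)=\int V(y)p_s(x,y)\D\mu(y)\leq \norm{V}_{p,\Omega}\norm{p_s(x,\cdot)}_{p'}$. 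Under the polynomial scaling $F(r)=r^\beta$, $\sV(x,r)\simeq r^\alpha$, the upper estimate \eqref{UE} gives $\norm{p_s(x,\cdot)}_\infty\lesssim s^{-\alpha/\beta}$, which combined with $\norm{p_s(x,\cdot)}_1\leq 1$ and Riesz--Thorin yields $\norm{p_s(x,\cdot)}_{p'}\lesssim s^{-\alpha/(\beta p)}=s^{-(1-\varrho)}$. The hypothesis $p>\tfrac{\alpha}{\beta}\vee 1$ is used precisely here: it ensures $1-\varrho\in(0,1)$ so the singularity is integrable.

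Next I would set up the higher moments using the Markov property. From the algebraic identity $(\int_0^t f)^{k}=k\int_0^t f(s)(\int_s^t f)^{k-1}\D s$ together with the strong Markov property at time $s$, one obtains the recursion
$$u_k(t,x)=k\int_0^t \Exp_x\!\left[V(X_s)\,u_{k-1}(t-s,X_s)\right]\D s.$$
Assuming inductively that $u_{k-1}(t,x)\leq A_{k-1}t^{(k-1)\varrho}$ uniformly on $\Omega\setminus\cN$, the one-step bound and the Beta-function identity
$$\int_0^t (t-s)^{(k-1)\varrho}s^{-(1-\varrho)}\D s = t^{k\varrho}\,\frac{\Gamma((k-1)\varrho+1)\Gamma(\varrho)}{\Gamma(k\varrho+1)}$$
yield the recursion $A_k = k\,C\norm{V}_{p,\Omega}\Gamma(\varrho)\,A_{k-1}\Gamma((k-1)\varrho+1)/\Gamma(k\varrho+1)$. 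Solving from $A_0=1$ collapses the telescoping Gamma factors to $A_k = k!\,\bigl(C\Gamma(\varrho)\norm{V}_{p,\Omega}\bigr)^k/\Gamma(k\varrho+1)$. Summing the Taylor series,
$$\Exp_x\!\left[e^{\int_0^t V(X_s)\D s}\right]\leq \sum_{k=0}^\infty \frac{\bigl(C\Gamma(\varrho)\norm{V}_{p,\Omega}t^\varrho\bigr)^k}{\Gamma(k\varrho+1)}=\ML_\varrho\!\bigl(C\Gamma(\varrho)\norm{V}_{p,\Omega}t^\varrho\bigr),$$
which is the claim upon setting $\kappa_p:=C\Gamma(\varrho)$ (or its maximum with $1$).

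The second, exponential-type bound follows from the classical asymptotic $\ML_\varrho(x)\sim \varrho^{-1}e^{x^{1/\varrho}}$ as $x\to\infty$ (with a uniform constant on any interval $[0,x_0]$ absorbed into $\kappa_p$): substituting $x=\kappa_p\norm{V}_{p,\Omega}t^\varrho$ and redefining $\kappa_p$ produces $\kappa_p\,e^{(\kappa_p\norm{V}_{p,\Omega})^{1/\varrho}t}$.

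The main obstacle is a bookkeeping issue rather than a conceptual one: I expect the trickiest step to be justifying that the inductive bound on $u_{k-1}$ holds uniformly on $\Omega\setminus\cN$ so that it can be inserted inside the Markov conditioning (i.e.\ that the process does not drop into the properly exceptional set at the intermediate times $X_s$). This is standard once one invokes the fact that $\cN$ is properly exceptional, but it needs to be stated carefully. Verifying the Beta-function identity and the Mittag-Leffler asymptotic are routine.
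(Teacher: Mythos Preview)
Your argument is correct and follows essentially the same route as the paper: a one-step H\"older bound $\Exp_x V(X_s)\lesssim \norm{V}_{p}s^{-\alpha/(\beta p)}$ from the on-diagonal heat kernel estimate, iteration via the Markov property, the resulting Dirichlet/Beta integral producing $\Gamma$-factors, and summation into the Mittag--Leffler function with its known exponential upper bound. The only cosmetic difference is that the paper computes the ordered simplex integral $\int_{0<s_1<\cdots<s_k<t}$ directly rather than packaging it as the recursion $u_k=k\int_0^t\Exp_x[V(X_s)u_{k-1}(t-s,X_s)]\D s$, but these are the same calculation.
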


\begin{proof}
From the heat kernel upper bound \eqref{UE} it is evident that for some constant $C$ we have
$$p_t(x, y)\leq \frac{C}{t^{\nicefrac{\alpha}{\beta}}}, \quad t>0,\quad  x, y\in \cX.$$
Then, for any $s$ we have
\begin{align*}
\Exp_x[V(X_s)] &\leq \norm{V}_p\left[\int_{\cX} (p_s(x, y))^{p'} d\mu(y)\right]^{\nicefrac{1}{p'}}
\\
&\leq C^{1/p} \norm{V}_p s^{-\frac{\alpha}{p\beta}} \left[\int_{\cX} (p_s(x, y)) d\mu(y)\right]^{\nicefrac{1}{p'}}\leq C^{1/p} \norm{V}_p s^{-\frac{\alpha}{p\beta}}.
\end{align*}
Define $C_1=C^{1/p}$. Therefore, for $0< s_1< s_2< s_3\ldots< s_k$, using Markov property we get that
\begin{eqnarray*}
\Exp_x[V(X_{s_1})\cdots V(X_{s_k})]
&=&
\Exp_x[V(X_{s_1})\cdots V(X_{s_{k-1}}) \Exp_x[V(X_{s_k})\,|\, \sF_{s_{k-1}}]] \\
&=&
\Exp_x[V(X_{s_1})\cdots V(X_{s_{k-1}}) \Exp_{X_{s_k}}[V(X_{s_k-s_{k-1}})] \\
&\leq&
C_1 \norm{V}_{p} (s_k-s_{k-1})^{-\frac{\alpha}{\beta p}} \Exp_x[V(X_{s_1})\cdots V(X_{s_{k-1}})] \\
&\leq &
\!\!\! \ldots \leq (C_1 \norm{V}_{p})^k s_1^{-\frac{\alpha}{\beta p}} (s_2-s_1)^{-\frac{\alpha}{\beta p}}
\cdots (s_k - s_{k-1})^{-\frac{\alpha}{\beta p}}.
\end{eqnarray*}
Hence (see also \cite[Lemma~4.51]{feyn} in the second edition)
\begin{eqnarray*}
\lefteqn{
\Exp_x\left[\frac{1}{k!}\left(\int_0^t V(X_s)\, \D{s}\right)^k\right] } \\
&\leq \;&
\int_0^t d{s_1} \int_{s_1}^t d{s_2} ... \int_{s_k}^t d{s_k} \, \Exp_x [V(X_{s_1})V(X_{s_2})...V(X_{s_k})] \\
&\leq \;&
C_1^k \norm{V}_{p}^k \int_0^t d{s_1}\int_{s_1}^t d{s_2} ... \int_{s_k}^t d{s_k} \,
s_1^{-\frac{\alpha}{\beta p}} (s_2-s_1)^{-\frac{\alpha}{\beta p}} \cdots (s_k - s_{k-1})^{-\frac{\alpha}{\beta p}} \\
&=&
\frac{\left(C_1\norm{V}_{p}t^\varrho\Gamma(\varrho)\right)^k}{\Gamma(1+k\eta)}, \quad t\leq \kappa_1,
\end{eqnarray*}
where $\varrho=1-\frac{\alpha}{\beta p}>0$, by our choice of $p$. Summing over $k$ and using the Mittag-Leffler function, we get
which gives
\begin{equation}\label{2.20}
\sup_{x\in \Omega\setminus\cN}\Exp_x\left[e^{\int_0^t V(X_s)\, \D{s}}\right]\leq \ML_\eta (C_1\norm{V}_{p}t^\varrho\Gamma(\varrho)).
\end{equation}
This gives the first part of the result.
It is also known that for some constant $m_\varrho$, dependent only on $\varrho$, 
$$
\ML_\varrho(x)\leq m_\varrho e^{x^{\nicefrac{1}{\varrho}}}, \quad  x\geq 0,
$$
holds. Thus using this estimate in \eqref{2.20} we have second part of the proof.
\end{proof}

Using Lemma~\ref{L3.3} and repeating the arguments of Theorem~\ref{T3.2}, we may improve the result by replacing $\norm{V^-}_{\infty}$ by $\norm{V^-}_p$.

\begin{theorem}\label{T3.3}
Grant the hypotheses of Lemma~\ref{L3.3}.
Suppose that there exists an eigenfunction $u$ in $\Omega$ with potential $V$ satisfying \eqref{E3.1}. Consider $p>\frac{\alpha}{\beta}\vee 1$. Then for every $\eps\in (0, 1)$
there exists a constant $\kappa>0$, independent of $\Omega, V, u$, such that for $r=\kappa \norm{V^-}^{-\frac{1}{\beta\varrho}}_{p, \Omega}$ and for some point $o\in \Omega$,
$$\mu (\Omega\cap B(o,r))\geq (1-\eps)\mu(B(o,r)).$$
Moreover, if $u\in \Cc(\overline\Omega)$ and $\cN=\emptyset$, then we can choose $o$ as a maximizer of $|u|$.
\end{theorem}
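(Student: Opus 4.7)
The strategy is to adapt the proof of \cref{T3.2}, replacing the $L^\infty$ bound on $V^-$ with the Khasminskii-type estimate of \cref{L3.3}. Starting from the Feynman--Kac representation \eqref{E3.1}, I would apply Cauchy--Schwarz at each $x\in\Omega\setminus\cN$ to obtain
\[
u^2(x) \le \Exp_x\!\left[e^{-2\int_0^t V(X_s)\,\D s}\Ind_{\{t<\uptau_\Omega\}}\right]\Exp_x[u^2(X_t)],
\]
and then use $e^{-2\int V}\le e^{2\int V^-}$. Unlike in \cref{T3.2}, we can no longer extract an $L^\infty$ factor from the exponential, so I would insert a second Cauchy--Schwarz (H\"older with exponents $q=q'=2$) to separate the exponential moment from the survival probability:
\[
\Exp_x\!\left[e^{2\int_0^t V^-(X_s)\,\D s}\Ind_{\{t<\uptau_\Omega\}}\right] \le \Exp_x\bigl[e^{4\int_0^t V^-(X_s)\,\D s}\bigr]^{1/2}\,\Prob_x(t<\uptau_\Omega)^{1/2}.
\]

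Integrating over $\Omega$ and using the symmetry and sub-Markovianity of the Dirichlet heat kernel to cancel $\int_\Omega u^2\,\D\mu$ from both sides, I would arrive at
\[
1 \le \Bigl(\sup_{x\in\cX\setminus\cN}\Exp_x[e^{4\int_0^t V^-(X_s)\,\D s}]\Bigr)^{1/2}\Bigl(\esssup_{x\in\Omega\setminus\cN}\Prob_x(t<\uptau_\Omega)\Bigr)^{1/2}.
\]
Applying \cref{L3.3} to the potential $4V^-$ bounds the first supremum by $\ML_\varrho\!\left(4\kappa_p\norm{V^-}_{p,\Omega}\, t^\varrho\right)$. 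Since $\ML_\varrho$ is continuous with $\ML_\varrho(0)=1$, for any preassigned $\gamma\in(0,1)$ I can pick $t=c_\gamma\norm{V^-}_{p,\Omega}^{-1/\varrho}$ with $c_\gamma$ small enough that this Mittag--Leffler value is at most $1+\gamma$, which yields $\esssup_x\Prob_x(t<\uptau_\Omega)\ge(1+\gamma)^{-1}\ge 1-\gamma$.

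At this point I would select $o\in\Omega\setminus\cN$ with $\Prob_o(\uptau_\Omega\le t)\le 2\gamma$ and set $r=(\eta/2)\sR(t)=(\eta/2)\,t^{1/\beta}$, so that $r=\kappa\norm{V^-}_{p,\Omega}^{-1/(\beta\varrho)}$ for a constant $\kappa$ depending on $\gamma$ (hence on $\eps$). Taking $K$ a compact subset of $E:=\Omega^c\cap B(o,r)$ with $2\mu(K)\ge\mu(E)$ and invoking \cref{L2.4} (or \cref{L2.5} in the recurrent regime $\beta\in[\alpha,2\alpha)$) produces $\Prob_o(\uptau_\Omega\le t)\gtrsim\mu(E)/\mu(B(o,r))$, and choosing $\gamma$ small in terms of $\eps$ forces $\mu(\Omega\cap B(o,r))\ge(1-\eps)\mu(B(o,r))$. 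For the last sentence of the theorem, when $u\in C(\overline\Omega)$ and $\cN=\emptyset$, I would instead take $o$ to be a maximizer of $|u|$: Cauchy--Schwarz applied pointwise at $o$, combined with the trivial bound $\Exp_o[u^2(X_t)]\le |u(o)|^2$, replaces the integration step and delivers the same estimate. The main obstacle is to arrange the H\"older split so that the survival probability emerges with a positive power on the right-hand side while the exponential moment remains controlled by a product of the form $\norm{V^-}_{p,\Omega}\,t^\varrho$; it is precisely that product which dictates the scaling $r\simeq\norm{V^-}_{p,\Omega}^{-1/(\beta\varrho)}$ appearing in the statement.
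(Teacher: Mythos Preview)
Your proposal is correct and follows essentially the same route the paper intends: the paper's proof of \cref{T3.3} merely says to repeat the argument of \cref{T3.2} with \cref{L3.3} in place of the $L^\infty$ bound, and the appearance of the factor $4V^-$ in the proof of \cref{C3.2} confirms that the extra Cauchy--Schwarz split you introduce (separating the exponential moment from the survival probability) is exactly what the authors have in mind. Your handling of the scaling $t\simeq\|V^-\|_{p,\Omega}^{-1/\varrho}$, the passage to $r=(\eta/2)\sR(t)$, the invocation of \cref{L2.4}/\cref{L2.5}, and the pointwise variant at a maximizer of $|u|$ all match the template of \cref{T3.2}.
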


As a consequence to the above result we have the following. The first one generalizes \cite{DCH, DEH} where similar results are obtained
in the Euclidean setting.

\begin{corollary}\label{C2.1}
Assume the setting of Theorem~\ref{T3.3}. Then there exists a universal constant $c$, not depending on $\Omega$, $V$, satisfying
$$\mu(\Omega)^{\frac{\beta}{\alpha}-\frac{1}{p}}\norm{V^-}_{p, \Omega}\geq c.$$
\end{corollary}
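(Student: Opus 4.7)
The corollary is essentially a direct consequence of Theorem~\ref{T3.3} combined with the $\alpha$-regularity assumption $\sV(x,r) \simeq r^\alpha$. My plan is to fix any convenient value of the free parameter (say $\varepsilon = \tfrac{1}{2}$), apply Theorem~\ref{T3.3} to obtain a ball $B(o,r)$ whose bulk lies in $\Omega$, and then simply compare volumes.

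First I would invoke Theorem~\ref{T3.3} with $\varepsilon = \tfrac{1}{2}$, which produces a constant $\kappa$ (independent of $\Omega$ and $V$), a point $o \in \Omega$, and radius
$$r = \kappa \norm{V^-}_{p,\Omega}^{-\nicefrac{1}{\beta\varrho}}, \qquad \varrho = 1 - \tfrac{\alpha}{\beta p},$$
satisfying $\mu(\Omega \cap B(o,r)) \geq \tfrac{1}{2}\,\mu(B(o,r))$.

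Next I would use the trivial monotonicity $\mu(\Omega) \geq \mu(\Omega \cap B(o,r))$ together with the volume comparison $\mu(B(o,r)) \simeq r^\alpha$ to conclude that $\mu(\Omega) \gtrsim r^\alpha$. Substituting the formula for $r$ and rearranging gives
$$\mu(\Omega)^{\nicefrac{\beta\varrho}{\alpha}} \,\norm{V^-}_{p,\Omega} \;\gtrsim\; 1.$$
A one-line computation shows
$$\tfrac{\beta\varrho}{\alpha} \;=\; \tfrac{\beta}{\alpha}\bigl(1 - \tfrac{\alpha}{\beta p}\bigr) \;=\; \tfrac{\beta}{\alpha} - \tfrac{1}{p},$$
which yields the claimed inequality.

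There is no real obstacle here, since all of the analytic work — the Feynman--Kac bounds via the Khasminskii-type estimate of Lemma~\ref{L3.3} and the concentration estimate derived from the hitting-time bound in Lemma~\ref{L2.4} — is already done in Theorem~\ref{T3.3}. The only thing one must be attentive to is matching the exponent $\beta\varrho/\alpha$ against $\beta/\alpha - 1/p$, and that the constant $\kappa$ delivered by Theorem~\ref{T3.3} is genuinely independent of $\Omega$ and $V$ so that the resulting constant $c = (\tfrac{1}{2}C)^{\alpha/(\beta\varrho)}\kappa^{\alpha/(\beta\varrho)}$ (where $C$ is the implicit constant in $\sV(o,r)\simeq r^\alpha$) is universal as claimed.
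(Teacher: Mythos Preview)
Your proof is correct and follows exactly the approach of the paper: fix $\varepsilon=\tfrac{1}{2}$ in Theorem~\ref{T3.3}, use $\mu(\Omega)\geq \mu(\Omega\cap B(o,r))\gtrsim r^\alpha$, and identify the exponent via $\tfrac{\beta\varrho}{\alpha}=\tfrac{\beta}{\alpha}-\tfrac{1}{p}$. The paper's proof is a single sentence to this effect, so you have simply spelled out the details.
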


\begin{proof}
Fix $\eps=\frac{1}{2}$ in Theorem~\ref{T3.3} above and use the fact $\frac{1}{\beta\varrho}=\frac{p}{p\beta-\alpha}$.
\end{proof}

The next result generalizes \cite{DEL} where a moment estimate on the negative principal eigenvalue was obtained on spheres. It can also be seen as
a generalization of the classical Keller's inequality in bounded domains. 

\begin{corollary}\label{C3.2}
For the universal constant $c$, same as in Corollary~\ref{C2.1}, there exists no non-positive eigenvalue if 
$$\mu(\Omega)^{\frac{\beta}{\alpha}-\frac{1}{p}}\norm{V^-}_{p, \Omega}< c.$$
Moreover, if $\lambda_\Omega$ is the non-positive eigenvalue then we have
$$\abs{\lambda_\Omega}^{\eta}\leq c_p \norm{V^-}_{p, \Omega},$$
for some universal constant $c_p$ not depending on $V$, $\Omega$, where $\eta=1-\frac{\alpha}{\beta p}>0$.
\end{corollary}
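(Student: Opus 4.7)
The strategy is to reduce both assertions to statements already available: the first to Corollary~\ref{C2.1} via a spectral shift, and the second to the Khasminiskii-type estimate of Lemma~\ref{L3.3} via the Feynman-Kac representation.

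For the first assertion, suppose on the contrary that $\lambda_\Omega\le 0$ is an eigenvalue with eigenfunction $u$, i.e.\
$\cE(u,v)+(Vu,v)=\lambda_\Omega(u,v)$ for all $v\in\cF^0(\Omega)$. Rewriting this as
$\cE(u,v)+(\tilde V u,v)=0$ with $\tilde V\df V-\lambda_\Omega=V+|\lambda_\Omega|$, we see that $u$ is a solution of \eqref{E3.1} with potential $\tilde V$. Since $\tilde V\ge V$ pointwise and $|\lambda_\Omega|\ge 0$, we have
$\tilde V^-=(V+|\lambda_\Omega|)^-\le V^-$ pointwise, hence
$\|\tilde V^-\|_{p,\Omega}\le \|V^-\|_{p,\Omega}$. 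Applying Corollary~\ref{C2.1} to the pair $(u,\tilde V)$ yields
$\mu(\Omega)^{\beta/\alpha-1/p}\|V^-\|_{p,\Omega}\ge \mu(\Omega)^{\beta/\alpha-1/p}\|\tilde V^-\|_{p,\Omega}\ge c$. Contrapositively, if the left hand side is strictly less than $c$, no non-positive eigenvalue can exist.

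For the second assertion, let $(u,\lambda_\Omega)$ be a Dirichlet eigenpair with $\lambda_\Omega\le 0$. By the same shift, $u$ satisfies \eqref{E3.1} with $\tilde V=V-\lambda_\Omega$, so
\[
u(x)=\Exp_x\!\left[\e^{-\int_0^t \tilde V(X_s)\,\D s}\,u(X_t)\Ind_{\{t<\uptau_\Omega\}}\right]
=\e^{\lambda_\Omega t}\Exp_x\!\left[\e^{-\int_0^t V(X_s)\,\D s}\,u(X_t)\Ind_{\{t<\uptau_\Omega\}}\right].
\]
Taking absolute values and using $\e^{-\int V\,\D s}\le \e^{\int V^-\,\D s}$, together with the $L^\infty$ bound on eigenfunctions (which is guaranteed by \eqref{UE} and the compactness argument in Section~\ref{eigensol}), gives
\[
|u(x)|\le \|u\|_\infty\,\e^{\lambda_\Omega t}\,\sup_{y\in\Omega\setminus\cN}\Exp_y\!\left[\e^{\int_0^t V^-(X_s)\,\D s}\right].
\]
Applying Lemma~\ref{L3.3} to $V^-$ (which is admissible as $p>\tfrac{\alpha}{\beta}\vee 1$) bounds the supremum by $\kappa_p\exp\!\bigl((\kappa_p\|V^-\|_{p,\Omega})^{1/\eta}t\bigr)$. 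Choosing $x_0\in\Omega\setminus\cN$ with $|u(x_0)|\ge \tfrac12\|u\|_\infty$, cancelling $\|u\|_\infty$, taking logarithms and dividing by $t$, we obtain
\[
-\lambda_\Omega\le \frac{\log(2\kappa_p)}{t}+(\kappa_p\|V^-\|_{p,\Omega})^{1/\eta},
\]
and letting $t\to\infty$ yields $|\lambda_\Omega|^\eta\le \kappa_p\|V^-\|_{p,\Omega}$, which is the desired inequality with $c_p=\kappa_p$.

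The main obstacle is the existence and boundedness of the relevant eigenfunction together with the justification of the pointwise Feynman-Kac formula outside the exceptional set $\cN$: one needs $u$ to be $\mu$-essentially bounded (and a regular representative) so that both sides of the inequality can be compared at a point where $|u|$ is close to $\|u\|_\infty$. Under our standing assumptions (VD), (RVD), (EHI), $(\widetilde E_F)$ and the bound $\inf_x \sV(x,\sR(t))>0$, this follows from the Hilbert-Schmidt/ultracontractive considerations of Section~\ref{eigensol}; otherwise one argues with $L^2$ normalization and passes through a density point of $|u|^2$ by replacing the pointwise bound with $\|T^{V,\Omega}_t u\|_\infty\le \|u\|_\infty \sup_y \Exp_y[\exp(\int_0^t V^-(X_s)\D s)]$ and using $\|T^{V,\Omega}_tu\|_\infty\ge \e^{-\lambda_\Omega t}\|u\|_\infty/C$ via the ultracontractive bound $\|u\|_\infty\lesssim \|u\|_2$. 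The rest of the computation is routine.
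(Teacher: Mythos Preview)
Your proof is correct and, for the first assertion, identical to the paper's: shift the potential by $\lambda_\Omega$ and invoke Corollary~\ref{C2.1} via $(V-\lambda_\Omega)^-\le V^-$. For the second assertion the paper takes a slightly different (and somewhat cleaner) route: rather than working pointwise in $L^\infty$, it argues in $L^2$ as in the proof of Theorem~\ref{T3.2}. From $T_t^{V,\Omega}u=e^{-\lambda_\Omega t}u$ one squares and integrates, then applies Cauchy--Schwarz inside the expectation and the symmetry of the heat kernel to obtain
\[
e^{-2\lambda_\Omega t}\int_\Omega u^2\,d\mu \;\le\; \esssup_{x\in\Omega\setminus\cN}\Exp_x\!\Bigl[e^{\int_0^t 4V^-(X_s)\,ds}\Bigr]\int_\Omega u^2\,d\mu,
\]
after which Lemma~\ref{L3.3} finishes as in your argument. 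The advantage of the $L^2$ version is that it bypasses entirely the need for $\|u\|_\infty<\infty$ and the selection of a near-maximizing point $x_0$, so the paragraph you devote to justifying boundedness of the eigenfunction becomes unnecessary. Your pointwise approach is of course valid under the standing ultracontractivity hypotheses; it just carries a bit more overhead.
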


\begin{proof}
The first part follows from Corollary~\ref{C2.1} since $(V-\lambda_\Omega)^-\leq V^-$. Since
$$T^{V, \Omega}_t u(x) =\Exp_x\left[ e^{\int_0^t V(X_s) ds} u(X_t) \Ind_{\{t<\uptau_\Omega \}}\right]=e^{-\lambda_\Omega t}u(x), \quad \text{a.e. in}\; x,$$
from the proof of Theorem~\ref{T3.2} we find that
$$e^{-2\lambda_\Omega t}\leq \esssup_{\Omega\setminus\cN} \Exp_x\left[ e^{\int_0^t 4V^-(X_s) ds}\right].$$
Then the proof follows applying the second part of Lemma~\ref{L3.3}.
\end{proof}

\section{Uniqueness of supersolutions}\label{Sec-Uni}

The chief goal of this section is to provide sufficient conditions for the uniqueness of super-solutions to certain type of  semi-linear partial differential equations.
More specifically, we are interested in the non-negative weak super-solutions of
$$-\Delta u\geq V u^p,$$
in $\cX$ or in a proper subset of $\cX$. Such equations have great importance in the theory of local and non-local PDEs. See for instance \cite{BL17, GS81, GS14, GS17, MP99, MP04} and references therein.

We continue to assume (VD), (RVD), (EHI) and $(\widetilde{E}_F)$ in this section.
For the ease of presentation we first consider a non-negative weak super-solution of
\begin{equation}\label{E4.1}
-\Delta u \geq u^p\quad \text{in}\; \cX, \quad \text{and}\quad p \ge 1.
\end{equation}
We say $u\in \cF_{loc}\cap \Cc(\cX)$
is a {\em weak super-solution} of \eqref{E4.1} if for any non-negative $\psi \in\cF\cap\Cc_0(\cX)$ we have
$$\cE(u, \psi) \geq (u^p, \psi).$$

We say that $u$ is {\em superharmonic} if $u \in \mathcal F_{\mbox{\tiny{loc}}}(\cX) \cap C(\cX)$ and $u$ is a weak super-solution to $-\Delta u\geq 0$, i.e., for any non-negative $\psi \in\cF\cap\Cc_0(\cX)$ we have
$$\cE(u, \psi) \geq 0.$$

Our first main result of this section is the following.
\begin{theorem}\label{T4.1}
Grant the hypotheses of Lemma~\ref{L2.4}.
Assume that for a reference point $o\in \cX$ and $p'=\frac{p}{p-1}, p\geq 1,$ we have
$$\liminf_{r\to \infty}\; \max\left\{\frac{F(r)}{\sV(o, r)}, \frac{\sV(o, r)}{(F(r))^{ p'}}\right\}=0\,.$$
Then there is no nontrivial non-negative weak super-solution of \eqref{E4.1} for such $p$.
\end{theorem}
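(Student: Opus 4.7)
The plan is to combine a truncated-Green-function version of the super-mean-value property with a single application of H\"older's inequality, and to invoke the liminf hypothesis only at the very last step.

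First I would derive, for every $T>0$ and $x\in\cX\setminus\cN$, the pointwise inequality
\[
u(x) \;\geq\; \int_{\cX} G_T(x,y)\, u^p(y)\, d\mu(y). \qquad (\ast)
\]
Starting from the weak super-solution inequality $\cE(u,\psi) \geq (u^p,\psi)$ tested against non-negative $\psi \in \cF^0(B(x,R))$, one invokes a Fukushima/Dynkin-type decomposition for $u$ on $B(x,R)$ to obtain $u(x) \geq P^R_T u(x) + \int G^R_T(x,y)\,u^p(y)\,d\mu(y)$, where $P^R_T$ and $G^R_T$ are the Dirichlet heat semigroup and truncated Green kernel on $B(x,R)$. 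Both summands are non-negative and $G^R_T \nearrow G_T$ as $R\to\infty$, so $(\ast)$ follows by monotone convergence. Justifying this decomposition for a merely $\cF_{\mathrm{loc}}$ super-solution (rather than one globally in $\cF$) is the principal technical hurdle.

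Next, fix $r>0$ and choose $T = F(2\eta' r)$. For $x,y\in B(o,r)$ one has $d(x,y)\leq 2r$, so Lemma~\ref{L2.1}(b) combined with \eqref{1.2} and \eqref{beta} gives $G_T(x,y) \gtrsim F(r)/\sV(o,r)$. Writing $I(r) := \int_{B(o,r)} u^p\, d\mu$, inequality $(\ast)$ restricted to $x\in B(o,r)$ yields the uniform lower bound
\[
u(x) \;\gtrsim\; \frac{F(r)}{\sV(o,r)}\, I(r), \qquad x \in B(o,r)\setminus\cN.
\]
Integrating this over $B(o,r)$ and then applying H\"older's inequality on the same ball produces
\[
F(r)\,I(r) \;\lesssim\; \int_{B(o,r)} u\,d\mu \;\leq\; \sV(o,r)^{1/p'}\, I(r)^{1/p}.
\]
Using $1-1/p = 1/p'$ and rearranging, this self-improves in a single step to the scale-invariant bound
\[
I(r) \;\lesssim\; \frac{\sV(o,r)}{F(r)^{p'}}, \qquad r > 0.
\]

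Finally, the liminf hypothesis provides a sequence $r_n\to\infty$ along which $\sV(o,r_n)/F(r_n)^{p'}\to 0$, forcing $I(r_n)\to 0$. Since $r\mapsto I(r)$ is non-decreasing, $I\equiv 0$, and continuity of $u$ then gives $u\equiv 0$, contradicting non-triviality. The case $p=1$ (where $p'=\infty$) is handled in the same framework: the H\"older step is a tautology and one reads off $I(r)\gtrsim F(r)\,I(r)$ directly, which is incompatible with $F(r)\to\infty$ from \eqref{beta} unless $I\equiv 0$. The companion condition $F(r)/\sV(o,r)\to 0$ in the liminf hypothesis is what underlies transience of the Hunt process and so legitimizes the $G_T$-estimates feeding into $(\ast)$.
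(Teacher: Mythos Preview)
Your argument is correct and takes a different route from the paper's. The paper works with the pointwise quantity $\sM(r)=\inf_{B(o,r)}u$: after first reducing to strict positivity via a heat-kernel argument, it invokes the hitting-time machinery of Lemmas~\ref{L2.4} and~\ref{L2.6} to establish $\sM(r)\gtrsim 1\wedge F(r)/\sV(o,r)$ (Lemma~\ref{L4.2}), then extracts $\sM(r)^{p-1}\lesssim 1/F(r)$ from the super-solution inequality on $B(x,r)$, and combines the two bounds to reach a contradiction that genuinely uses both halves of the $\max$. Your route bypasses all of that: the truncated-Green lower bound of Lemma~\ref{L2.1}(b) alone yields a uniform lower bound for $u$ on $B(o,r)$ in terms of $I(r)$, and a single H\"older step closes the loop. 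In fact your argument needs only the $\sV(o,r)/F(r)^{p'}\to 0$ half of the hypothesis and does not use transience anywhere, since Lemma~\ref{L2.1}(b) rests only on the near-diagonal lower bound \eqref{NLE}; so your closing sentence attributing a role to the $F(r)/\sV(o,r)$ condition is superfluous, and you have in effect proved a mild strengthening. The technical hurdle you flag---justifying $(\ast)$ for $u\in\cF_{\mathrm{loc}}$---is exactly the localization issue handled in the paper just before Lemma~\ref{L4.1} via cut-off functions and \cite[Lemma~4.14]{GH08}, so it is not an obstruction. What you give up relative to the paper's approach is the infrastructure (Lemma~\ref{L4.2}) that is reused for Theorem~\ref{T4.3}; for Theorem~\ref{T4.1} in isolation your argument is shorter.
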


We denote by $U_n=B(o, n)$ and $\uptau_n$ be the exit time from $U_n$. Recall from \cite{FOT} that for any open set
$\Omega$ and any $K\Subset\Omega$ there exists a cut-off function $\varphi\in \cF\cap \Cc_0(\Omega)$ satisfying the following: $0\leq \varphi\leq 1$
and $\varphi=1$ in a neighbourhood of $\overline{K}$. Set of all cut-off function is denote by $(K, \Omega)$.

Let $D, D_1$ be a pair of open set such that $D\Subset D_1\Subset \cX$. Note that, by \cite[Thorem~1.4.2]{FOT} we have $\hat{u}=(m_D^{-1}u)\wedge 1\in \cF$ where $m_D=\sup_{D_1} \abs{u}$. 
Then, since $u$ is non-negative we also have $\hat{u}\varphi\in \cF(D_1)\cap \Cc_0(D_1)$ for $\varphi\in (D, D_1)$. But note that $m_v (\hat{u} \varphi)=u\varphi$ in $D_1$. Therefore $u\varphi\in \cF(D_1)\cap \Cc_0(D_1)$.

Also note that $u\varphi$ is superharmonic in $D$. Take a non-negative $\psi\in \cF(\cX)\cap\Cc_0(D)$. Then due to the local property and \eqref{E4.1} we have
$$\cE(u\varphi, \psi)= \cE(u(\varphi-1), \psi) + \cE(u, \psi)\geq (u^p, \psi)\geq 0.$$
Thus by \cite[Lemma~4.16]{GH08} we have for any $t>0$ that
\begin{equation}\label{needed}
u(x)=u(x)\varphi(x)\geq \Exp_x\left[u(X_t)\Ind_{\{t<\uptau_D\}}\right]=\int_{D}u(y) p_t^D(x, y) d\mu.
\end{equation}
 Again by \cite[Lemma~5.11]{GT12} we know that $p_t^D(x, y)$ is H\"older continuous in $D\setminus\cN$. Hence the RHS of \eqref{needed} is continuous in $D\setminus\cN$ implying that the above inequality holds pointwise in $D\setminus\cN$. We also have the following standard fact.
 
 \begin{lemma}\label{L4.1}
 Fix $x\in D\setminus\cN$. Then $Y_t=u(X_t)\Ind_{\{t<\uptau_D\}}$ is a super-martingale with respect to $\{\sF_{t\wedge\uptau_D}, \Prob_x\}$.
 \end{lemma}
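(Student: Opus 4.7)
The plan is to deduce the supermartingale property directly from the pointwise inequality \eqref{needed} via the strong Markov property. Having $u(x) \geq \Exp_x\!\left[u(X_t)\Ind_{\{t<\uptau_D\}}\right]$ for every $x \in D\setminus\cN$ and every $t>0$ in hand, the claim is essentially a standard argument, and the main (minor) obstacle will be bookkeeping on the properly exceptional set $\cN$ and handling the two cases $\{s<\uptau_D\}$ and $\{s\geq \uptau_D\}$ separately.

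First, I would fix $x\in D\setminus\cN$ and $0\leq s<t$, and show that $\Exp_x[\,Y_t\,\vert\,\sF_{s\wedge\uptau_D}]\leq Y_s$ a.s.\ $\Prob_x$. On the event $\{s\geq \uptau_D\}$, we have $\uptau_D\leq s<t$, so both $Y_s=u(X_s)\Ind_{\{s<\uptau_D\}}=0$ and $Y_t=u(X_t)\Ind_{\{t<\uptau_D\}}=0$; hence the inequality holds trivially there. It therefore suffices to work on the event $\{s<\uptau_D\}\in\sF_{s\wedge\uptau_D}$, where $s\wedge\uptau_D=s$ and $Y_s=u(X_s)$.

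Next I invoke the Markov property at time $s$. Since $\cN$ is properly exceptional, $\Prob_x(X_s\in \cN)=0$, so $X_s\in D\setminus\cN$ a.s.\ on $\{s<\uptau_D\}$. Using the standard identity $\uptau_D=s+\uptau_D\circ\theta_s$ on $\{s<\uptau_D\}$, where $\theta_s$ denotes the time-shift, the Markov property gives
\begin{align*}
\Exp_x\!\left[u(X_t)\Ind_{\{t<\uptau_D\}}\,\big\vert\,\sF_s\right]\Ind_{\{s<\uptau_D\}}
&= \Ind_{\{s<\uptau_D\}}\,\Exp_{X_s}\!\left[u(X_{t-s})\Ind_{\{t-s<\uptau_D\}}\right].
\end{align*}
Applying \eqref{needed} at the random point $X_s\in D\setminus\cN$ with time parameter $t-s$, the right-hand side is bounded above by $\Ind_{\{s<\uptau_D\}}u(X_s)=Y_s$.

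Putting the two cases together yields $\Exp_x[Y_t\vert\sF_{s\wedge\uptau_D}]\leq Y_s$ a.s.\ Finally, integrability $\Exp_x\abs{Y_t}<\infty$ follows from $Y_t\geq 0$ together with $\Exp_x Y_t\leq u(x)<\infty$ by \eqref{needed}, and adaptedness of $Y_t$ to $\sF_{t\wedge\uptau_D}$ is immediate since $Y_t=u(X_{t\wedge\uptau_D})\Ind_{\{t<\uptau_D\}}$. This establishes that $\{Y_t,\sF_{t\wedge\uptau_D},\Prob_x\}$ is a nonnegative supermartingale, completing the proof.
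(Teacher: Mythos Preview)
Your proof is correct and follows essentially the same route as the paper's own argument: apply the Markov property at time $s$ on the event $\{s<\uptau_D\}$ and invoke \eqref{needed} at the random point $X_s$ with time parameter $t-s$. You are simply more explicit than the paper about the case split on $\{s\gtrless\uptau_D\}$, the role of the properly exceptional set $\cN$, and the integrability/adaptedness checks.
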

 
 \begin{proof}
 Let $s<t$. Then by strong Markov property
 \begin{align*}
Y_t=\Exp_x\left[u(X_t)\Ind_{\{t<\uptau_D\}}\Big| \sF_{s\wedge\uptau_D}\right] & = \Exp_x\left[ u(X_t)\Ind_{\{s<\uptau_D\}}\Ind_{\{t<\uptau_D\}}
\Big|\sF_{s\wedge\uptau_D}\right]
\\
&= \Ind_{\{s<\uptau_D\}}\Exp_{X_s}\left[u(X_{t-s})\Ind_{\{t-s<\uptau_D\}}\right]
\\
&\leq u(X_s) \Ind_{\{s<\uptau_D\}}=Y_s\,.
\end{align*}
This completes the proof.
 \end{proof}
 
Let $\uuptau$ be any stopping time smaller than $\uptau_D$. Then using Lemma~\ref{L4.1} and optional stopping theorem we find that
$$u(x)\geq \Exp_x\left[u(X_{t\wedge\uuptau})\Ind_{\{\uuptau\wedge t<\uptau_D\}}\right],$$
and letting $t\to\infty$, using Fatou's lamma we finally arrive at
\begin{equation}\label{E4.3}
u(x)\geq \Exp_x\left[u(X_{\uuptau})\Ind_{\{\uuptau<\uptau_D\}}\right].
\end{equation}
Choose $\uuptau_r$ as the hitting time to $\overline{B(o, r)}$ and $\uuptau=\uuptau_r\wedge\uptau_{n}$. Then for any $x\in U_n\setminus \overline{U_r}$ we have from \eqref{E4.3} that
$$u(x)\geq \Exp_x\left[u(X_{\uuptau})\Ind_{\{\uuptau<\uptau_n\}})\right]\geq \Exp_x\left[u(X_{\uuptau})\Ind_{\{\uuptau_r<\uptau_n\}})\Ind_{\{\uuptau_r<\infty\}}\right].$$
Now observe that, denoting $\uptau^*=\lim_{n\to \infty}\uptau_n$, we have $\{\uuptau_r<\infty\}\subset \{\uuptau_r<\uptau^*\}$. This holds because $\uptau^*$ is the explosion time and 
the process can not hit $\overline{B(o, r)}$ after the explosion time. Therefore, letting $n\to\infty$ and using Fatou's lemma we obtain
\begin{equation}\label{E4.4}
u(x)\geq \Exp_x\left[u(X_{\uuptau_r})\Ind_{\{\uuptau_r<\infty\}})\right], \quad x\notin \overline{B(o, r)}\cup\cN.
\end{equation}
Let
$$\sM(r)=\inf_{x\in B(o, r)} u(x).$$
Then we have the following lower bound estimate on $\sM(r)$.

\begin{lemma}\label{L4.2}
Suppose $u \in \mathcal F_{\mbox{\tiny{loc}}}(\cX) \cap C(\cX)$ is a positive superharmonic function, i.e., $-\Delta u\geq 0$. Then the following holds.
\begin{itemize}
\item[(a)] For some constant $\kappa_2$ we have for $r\geq 2$ that
$$\sM(r)\geq \kappa_2 \left( 1\wedge \frac{F(r)}{\sV(o, r)} \right).$$

\item[(b)] There exists a constant $\kappa_3$ satisfying
$$\sM(r)\leq \kappa_3 \sM(2r), \quad r\geq 2.$$
\end{itemize}
\end{lemma}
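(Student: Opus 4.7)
The plan is to combine the superharmonic Feynman--Kac inequality with the two hitting time estimates in Lemma~\ref{L2.6}. Observe first that the derivation leading to \eqref{E4.4} only used the fact that $u^p \geq 0$; the identical argument (with $r$ replaced by any $\rho \geq 1$) therefore yields, for any positive superharmonic $u$,
\[
u(x) \geq \Exp_x\!\bigl[u(X_{\uuptau_\rho})\Ind_{\{\uuptau_\rho<\infty\}}\bigr], \qquad x\in\cX\setminus(\overline{B(o,\rho)}\cup\cN).
\]
Since $u$ is continuous, $u \geq \sM(\rho)$ pointwise on $\overline{B(o,\rho)}$, so the right-hand side dominates $\sM(\rho)\,\Prob_x(\uuptau_\rho < \infty)$.

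For part (a), I split $B(o, r)$ into $B(o, 2)$ and its complement relative to $B(o, r)$. For $x \in B(o, 2)$, continuity and positivity of $u$ give $u(x) \geq \sM(2)$, which will eventually produce the ``$1$'' term in the minimum. For $x \in B(o, r)$ with $d(o,x) \geq 2$, I apply the inequality above with $\rho = 1$ to get $u(x) \geq \sM(1)\,\Prob_x(\uuptau_1 < \infty)$, and then invoke Lemma~\ref{L2.6}(a) with $\theta = r$ (admissible because $d(o,x) \leq r$) to deduce $\Prob_x(\uuptau_1 < \infty) \gtrsim F(r)/\sV(o,r)$. Combining both regions yields
\[
\sM(r) \geq \min\!\bigl(\sM(2),\, c\,\sM(1)\,F(r)/\sV(o,r)\bigr) \geq \kappa_2\bigl(1 \wedge F(r)/\sV(o,r)\bigr),
\]
with $\kappa_2 := \min(\sM(2), c\,\sM(1))$.

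For part (b), I apply the inequality with $\rho = r$. Any $x \in B(o, 2r) \setminus \overline{B(o, r)}$ satisfies $d(o, x) < 2r$, so Lemma~\ref{L2.6}(b) provides a universal lower bound $\Prob_x(\uuptau_r < \infty) \geq c > 0$, whence $u(x) \geq c\,\sM(r)$. Interior points $x \in B(o, r)$ trivially satisfy $u(x) \geq \sM(r) \geq c\,\sM(r)$. Taking the infimum over $B(o, 2r)$ gives $\sM(2r) \geq c\,\sM(r)$, i.e., $\sM(r) \leq \kappa_3\,\sM(2r)$ with $\kappa_3 := 1/c$.

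Neither direction poses a substantial obstacle once the Feynman--Kac representation \eqref{E4.4} and Lemma~\ref{L2.6} are in hand; the argument essentially translates hitting probability estimates into pointwise lower bounds via the minimum principle. The only small care needed is in passing from the open-ball infimum $\sM(\rho) = \inf_{B(o,\rho)} u$ to a pointwise lower bound on the closed ball $\overline{B(o,\rho)}$, where the stopped process $X_{\uuptau_\rho}$ a priori lands; this is immediate from continuity of $u$.
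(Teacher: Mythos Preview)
Your proof is correct and follows essentially the same approach as the paper: both use the superharmonic inequality \eqref{E4.4} together with Lemma~\ref{L2.6}(a) (with $\rho=1$, $\theta=r$) for part (a) and Lemma~\ref{L2.6}(b) (with $\rho=r$) for part (b). Your write-up is in fact slightly more explicit about the case split $B(o,2)$ versus $B(o,r)\setminus B(o,2)$ in (a) and the trivial inner-ball case in (b), which the paper leaves implicit.
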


\begin{proof}
Both proofs use \eqref{E4.4}. Fixing $r=1$ in \eqref{E4.4} and using the continuity of $u$ as well as the continuity of the sample paths, we find that
$$u(x)\geq \left[ \inf_{B(o, 1)} u \right]\Prob_x(\uuptau_1<\infty).$$
For $x\in B^c(o, 2)$ we use Lemma~\ref{L2.6}(a). This proves (a).

Coming to (b). For $x\in B(o, 2r)\setminus (\overline{B(o, r)}\cup\cN)$, we use \eqref{E4.4} and Lemma~\ref{L2.6}(b).
\end{proof}

Now we are ready to prove Theorem~\ref{T4.1}.
\begin{proof}[Proof of Theorem~\ref{T4.1}]
First we assume that $u>0$ in $\cX$.
Applying \cite[Lemma~4.14]{GH08} (for any $\lambda>0$ and then let $\lambda\to 0$) we obtain for $x\in B(o, r)\setminus\cN$ that
$$u(x)\geq \Exp_x\left[\int_0^{\uptau_{B(x, r)}} u^{p}(X_s) ds\right]\geq \sM(2r) \Exp_x[\uptau_{B(x, r)}]\gtrsim \sM^p(r) F(r),$$
by Lemma~\ref{L4.2}(b). Since $x$ is arbitrary, this implies
$$\sM(r)\gtrsim \sM^p(r) F(r) \quad \Rightarrow \sM^{p-1}(r)\lesssim \frac{1}{F(r)}.$$
It is easily seen that $p=1$ is not possible since $F(r)\to \infty$. So we consider $p>1$.
Now apply Lemma~\ref{L4.2}(a) to get
\begin{equation}\label{ET4.1A}
1\wedge \frac{F(r)}{\sV(o, r)}\lesssim \frac{1}{F(r)^{\frac{1}{p-1}}}.
\end{equation}
We pick a sequence of $r$, tending to infinity, satisfying
\begin{equation}\label{ET4.1B}
\lim_{r\to \infty}\; \max\left\{\frac{F(r)}{\sV(o, r)}, \frac{\sV(o, r)}{(F(r))^{ p'}}\right\}=0\,.
\end{equation}
Hence by \eqref{ET4.1A}
$$\sV(o, r)\gtrsim F(r)^{\frac{p}{p-1}}.$$
But this is contradiction to \eqref{ET4.1B}. This, of course, implies there cannot be a solution which is strictly positive in $\cX$.

Now suppose only that $u(x_0)>0$ for some $x_0$. Without loss of generality we assume $x_0=o$ and $\inf_{B(o, \delta)} u>0$. 
Note that by \eqref{needed} and for large enough $D\supset B(o, \delta)$ we have

$$u(x)\geq \Exp_x\left[u(X_t)\Ind_{\{t<\uptau_D\}}\right]\geq \Exp_x\left[u(X_t)\Ind_{B(o, \delta)}\Ind_{\{t<\uptau_D\}}\right].$$
Now let $D\uparrow \cX$ and use Fatou's Lemma to see that 
$$u(x)\geq \left[\inf_{B(o, \delta)} u \right] \Prob_x(X_t\in B(o, \delta)).$$
But the RHS is positive, since there exists a positive heat kernel. This implies $u$ must be positive everywhere. But we have already seen that this is not possible. Hence $u=0$ in $\cX$. This completes the proof.
\end{proof}

Before we proceed further, we should note that the Hunt process or Dirichlet form considered in Theorem~\ref{T4.1} is transient. So we may also ask a similar question for recurrent forms. Recall that a Dirichlet form is 
recurrent if the associated semigroup is recurrent \cite{FOT}. It turns out that one has a stronger result when $\cE$ is recurrent.

\begin{theorem}\label{T4.2}
Let $(\cE, \cF)$ be an irreducible and recurrent Dirichlet form on $\cX$. Then every superharmonic function on $\cX$ must be constant.
\end{theorem}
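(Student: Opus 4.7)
I would follow the pattern of the derivation of \eqref{E4.4}, with recurrence replacing the quantitative hitting-time estimates of Section~\ref{S-Hitting}. The natural reading of the statement (and the only one consistent with simple one-dimensional examples such as $u(x)=-x^2$ on $\RR$) is that $u$ is bounded from below, so after replacing $u$ by $u-\inf_\cX u$ I may assume $u\geq 0$ and $\inf_\cX u=0$; the target then is to show $u\equiv 0$.

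Assume for contradiction that $u\not\equiv 0$. By continuity, I can pick $x_0$ and numbers $r,\delta>0$ such that $u\geq\delta$ on $K:=\overline{B(x_0,r)}$. The key step is to invoke two consequences of irreducibility and recurrence from the Fukushima--Oshima--Takeda framework (which sit outside Section~\ref{S-Hitting} but are classical): the Hunt process is conservative ($\uptau^*=\infty$ $\Prob_x$-a.s.\ on $\cX\setminus\cN$), and the hitting time $\uuptau:=\uuptau_K$ is $\Prob_x$-a.s.\ finite for every $x\in\cX\setminus\cN$, since the ball $K$ has positive capacity and is therefore non-polar. With these in hand, I would mimic the derivation of \eqref{E4.4}: apply Lemma~\ref{L4.1} to $u(X_t)\Ind_{\{t<\uptau_D\}}$ for $D=B(o,R)\supset K$, stop at $\uuptau\wedge\uptau_D\wedge t$, and send first $t\to\infty$ and then $R\to\infty$, using Fatou's lemma (legitimate by $u\geq 0$) together with $\uptau_D\nearrow\uptau^*=\infty$ and $\uuptau<\infty$ a.s. This yields
\[
u(x)\;\geq\;\Exp_x\bigl[u(X_\uuptau)\Ind_{\{\uuptau<\infty\}}\bigr],\qquad x\in\cX\setminus(K\cup\cN).
\]
Path-continuity places $X_\uuptau$ in $K$ on $\{\uuptau<\infty\}$, and $u\geq\delta$ on $K$, so $u(x)\geq\delta$ off $\cN$ and, by continuity, everywhere. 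This contradicts $\inf_\cX u=0$, forcing $u\equiv 0$.

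\textbf{Main obstacle.} The one input that is not a specialization of the tools developed in Section~\ref{S-Hitting} is the use of irreducibility and recurrence to produce the conservativeness $\uptau^*=\infty$ and the a.s.\ finiteness $\Prob_x(\uuptau_K<\infty)=1$. Lemmas~\ref{L2.4}--\ref{L2.6} are calibrated to the transient regime and do not supply these facts directly; they must be imported from general symmetric Dirichlet-form theory. A secondary but minor subtlety is extending the conclusion $u\geq\delta$ across the properly exceptional set $\cN$, which is handled by continuity of $u$. Everything else is a routine modification of the argument that produced \eqref{E4.4} in the proof of Theorem~\ref{T4.1}.
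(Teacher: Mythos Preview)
Your proposal is correct and follows essentially the same route as the paper: invoke recurrence (the paper cites \cite[Theorem~3.5.6]{CF12}) to get $\Prob_x(\uuptau_B<\infty)=1$ q.e., then combine with the supermartingale inequality \eqref{needed}--\eqref{E4.4} to conclude that $u(x)\geq \inf_{\overline B} u$ everywhere. The only cosmetic difference is that the paper argues directly---for each $o$ and each $r>0$ one gets $\inf_\cX u=\inf_{B(o,r)}u$, and letting $r\to 0$ gives $\inf_\cX u=u(o)$ for every $o$---whereas you phrase it as a contradiction after normalizing $\inf_\cX u=0$; your explicit remark that the argument needs $u$ bounded below (which the paper leaves implicit via the reference to \eqref{needed}) is a useful clarification.
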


\begin{proof}
The proof is quite standard. By \cite[Theorem 3.5.6]{CF12} it is known that for any ball $B$ we have
$$\Prob_x(\uuptau_B<\infty)=1\quad \text{q.e. in}\; \cX.$$
Consider a ball $B(o, r)$ around $o\in \cX$. Then $u$ a being super-solution we have (see \eqref{needed}) from above
$$ u(x)\geq \Exp_x[u(X_{\uuptau_B}) \Ind_{\{\uuptau_B<\infty\}}]\geq \inf_{B(o, r)} u\quad \text{q.e. in}\; B^c(o, r).$$
Since $r$ is arbitrary and $u$ is continuous, this implies that $\inf_{\cX} u=u(o)$. But $o$ is also arbitrarily chosen. Hence $u$ must be constant.
\end{proof}

Now in the remaining part of this section we consider non-negative weak super-solutions to
$$-\Delta u\geq V f(u),$$
in $\Omega=\cX\setminus K$ for some compact set $K$ in $\cX$ and $f:[0, \infty)\to[0, \infty)$ is a non-decreasing, continuous function, $f(0)=0$, and
$$\liminf_{t\to 0+} \frac{f(t)}{t^p}>0, \quad p\geq 1.$$
The potential function $V$ is assumed to be continuous and positive. We also assume that $\Omega$ is connected. 

Recall that we say $u\in \cF_{loc}\cap \Cc(\cX)$
is a weak super-solution of 
\begin{equation}\label{E4.7}
-\Delta u\geq V f(u),\quad \text{in}\; \Omega,
\end{equation}
if for any non-negative $\psi\in\cF^0(\Omega)\cap\Cc_0(\Omega)$ we have
$$\cE(u, \psi) \geq (Vf(u), \psi).$$
Before we state our next main result we need few notations and a lemma. Recall that $g^B$ denote the Green function in the ball $B$. Define for $\kappa>0$, $r>1$,
\begin{align}
\Psi(x, r) &=\int_{B(x, r-1)} V(y) g^{B(x, r-1)}(x, y) d\mu(y),\label{E4.8}
\\
\Phi_\kappa(r) &= \inf_{x\in \overline{B(o, r)}\setminus B(o, \frac{r}{2})}\frac{F(r)}{\sV(o, r)}\int_{B(x, \kappa r)} V(y) d\mu(y).\label{E4.9}
\end{align}
where $o$ is a reference point. We also denote $\Phi=\Phi_1$. Then we have the following estimate.

\begin{lemma}\label{L4.3}
There exists a universal $\kappa\in (0, \nicefrac{1}{2})$ such that for all $r$ large we have
\begin{equation}\label{EL4.3A}
\inf_{x\in \overline{B(o, r)}\setminus B(o, \frac{r}{2})} \Psi(x, r)\gtrsim \Phi_\kappa(r).
\end{equation}
\end{lemma}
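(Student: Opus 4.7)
The plan is to reduce \eqref{EL4.3A} to a pointwise lower bound on the killed Green function. Fix $x \in \overline{B(o,r)}\setminus B(o, r/2)$ and set $B_x := B(x, r-1)$; for $r\geq 2$ and any $\kappa \in (0, 1/2)$ one has $B(x, \kappa r)\subset B_x$. The key step is to establish that, for a universal $\kappa$ and all $r$ large,
\begin{equation}\label{pt-star}
g^{B_x}(x, y) \gtrsim \frac{F(r)}{\sV(x, r)}, \qquad y \in B(x, \kappa r)\setminus \cN.
\end{equation}
Granting \eqref{pt-star} and using $\sV(x, r) \asymp \sV(o, r)$ (which follows from $r/2\leq d(o, x)\leq r$ and two applications of \eqref{1.2}), one obtains
\[
\Psi(x, r) \geq \int_{B(x, \kappa r)} V\, g^{B_x}(x, \cdot)\, d\mu \gtrsim \frac{F(r)}{\sV(o, r)} \int_{B(x, \kappa r)} V\, d\mu,
\]
and \eqref{EL4.3A} follows by taking the infimum over $x$.

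To prove \eqref{pt-star}, I would use the free Green function as a proxy and control the killing correction. Set $T := F(\eta' \kappa r)$. Lemma~\ref{L2.1}(b) applied with radius $\kappa r$, combined with \eqref{beta} and \eqref{1.2}, gives
\[
G_T(x, y) \gtrsim \frac{F(\kappa r)}{\sV(x, \kappa r)} \gtrsim \kappa^{\beta}\,\frac{F(r)}{\sV(x, r)}.
\]
Writing $\tau := \tau_{B_x}$, the strong Markov property yields
\[
G_T(x, y) = G_T^{B_x}(x, y) + \Exp_x\!\left[ G_{T - \tau}(X_\tau, y)\,\Ind_{\{\tau < T\}} \right],
\]
so it is enough to show the correction term is at most $\tfrac{1}{2} G_T(x, y)$ for $\kappa$ small. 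On $\{\tau < T\}$, continuity of paths gives $d(X_\tau, y) \geq (r-1) - \kappa r \geq r/2$; hence, for $z = X_\tau$ and any $t \leq T$, the inequality $t\Phi(c d/t)\geq c d/\sR(t) - 1$ (contained in the proof of Lemma~\ref{L2.2}(a)) forces $\tfrac{t}{2}\Phi(c d(z,y)/t) \geq c'/\kappa - \tfrac{1}{2}$. Plugging into \eqref{UE} and integrating as in Lemma~\ref{L2.2}(a), then using \eqref{beta} and (VD) to reintroduce $\sV(x, r)$,
\[
G_T(z, y) \lesssim e^{-c'/\kappa}\,\kappa^{\beta' - \alpha_2}\,\frac{F(r)}{\sV(x, r)}.
\]
Since $e^{-c'/\kappa}$ dominates every polynomial in $\kappa$ as $\kappa\to 0$, fixing $\kappa$ small (but universal) makes the correction term at most $\tfrac{1}{2} G_T(x, y)$, and \eqref{pt-star} then follows from $g^{B_x}(x, y) \geq G_T^{B_x}(x, y) \geq \tfrac{1}{2} G_T(x, y)$.

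The main obstacle is this uniform off-diagonal upper bound on $G_T(z,y)$ for $z \in \partial B_x$: Lemma~\ref{L2.2}(a) by itself only yields polynomial control, so the exponential factor $e^{-c'/\kappa}$ must be extracted from the subgaussian tail in \eqref{UE} \emph{before} performing the time integration. Once this is done, the remaining steps are routine applications of doubling and the scaling \eqref{beta}.
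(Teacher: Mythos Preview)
Your approach is correct but genuinely different from the paper's. The paper obtains the pointwise lower bound on $g^{B_x}(x,y)$ directly by quoting the two-sided Green function estimate of \cite[Theorem~3.12]{GH14}, namely
\[
g^{B_x}(x,y)\;\geq\; C^{-1}\!\int_{K^{-1}d(x,y)}^{\,r-1}\frac{F(s)}{s\,\sV(x,s)}\,ds,
\qquad y\in B(x,K^{-1}(r-1)),
\]
and then simply integrates: for $d(x,y)\le \tfrac{r}{K\kappa}$ with $\kappa$ large, the right-hand side is $\gtrsim \bigl(F(r-1)-F(r/(K^2\kappa))\bigr)/\sV(x,r)\gtrsim F(r)/\sV(o,r)$. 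The choice of $\kappa$ is then purely algebraic (making $F(r/(K^2\kappa))\le \tfrac12 F(r-1)$ via \eqref{beta}).

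You instead derive \eqref{pt-star} from scratch via the first-passage decomposition $G_T=G_T^{B_x}+\Exp_x\bigl[G_{T-\tau}(X_\tau,y)\Ind_{\{\tau<T\}}\bigr]$, bounding the correction by exploiting the sub-Gaussian tail of $p_t$ together with $d(X_\tau,y)\ge r/2$ and $\sR(t)\le \eta'\kappa r$. This is self-contained within the paper's Section~\ref{S-Hitting} machinery (Lemmas~\ref{L2.1}--\ref{L2.2}) and nicely illustrates the probabilistic theme; the price is that the $\kappa$-dependence is obtained through an exponential-vs-polynomial balancing rather than by a one-line citation.

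One point to tighten: you cannot literally pull out the uniform lower bound $\tfrac{t}{2}\Phi(cd/t)\ge c'/\kappa-\tfrac12$ and then integrate $\int_0^T \sV(z,\sR(t))^{-1}\,dt$, since the latter may diverge at $t=0$. Rather, as you indicate in your final paragraph, one should keep the full exponential in the integrand and follow the computation of Lemma~\ref{L2.2}(a): after the substitutions $t=F(s)$ and $u=d(z,y)/s$, the lower limit of the $u$-integral is $\gtrsim 1/\kappa$, and it is this that produces the factor $e^{-c'/\kappa}$ while the remaining integral $\int u^{\alpha_2-1}e^{-cu/2}\,du$ stays finite. With that clarification your argument goes through.
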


\begin{proof}
We consider $r>2$. Let $B=B(x,r-1)$. 
Recall from \cite[Theorem~3.12]{GH14} that for some constants $C, K>1$ we have
\begin{equation}\label{EL4.3B}
g^B(x, y)\geq C^{-1}\int_{K^{-1} d(x, y)}^{r-1} \frac{F(s)}{s \sV(x, s)} ds, \quad y\in B(x, K^{-1}(r-1))\setminus \{x\}.
\end{equation}
Therefore, for $x \neq y$ with $d(x, y)\leq K^{-1}(r-1)$ we have from \eqref{EL4.3B} and \eqref{A1} that 
$$g^B(x, y)\geq \frac{1}{C \sV(x, r)} \int_{K^{-1} d(x, y)}^{r-1} \frac{F(s)}{s} ds\gtrsim \frac{F(r-1)-F(K^{-1}\D(x, y)) }{\sV(x, r)}\gtrsim \frac{F(r-1)-F(K^{-1}d(x, y)) }{\sV(o, r)},$$
where the last line follows from \eqref{1.2}. Pick $x\in B(o, r)\setminus B(o, r/2)$. Then for any $\kappa>2$
\begin{align*}
\Psi(x, r) &\geq \frac{1}{\sV(o, r)}\int_{B(x, \frac{r}{K\kappa})} V(y) \left(F(r-1)-F\left(\frac{d(x, y)}{K}\right)\right) d\mu(y).
\\
&\geq \frac{1}{\sV(o, r)}\int_{B(x, \frac{r}{K\kappa})} V(y) \left(F(r-1)-F\left(\frac{r}{K^2\kappa}\right)\right) d\mu(y).
\end{align*}
By \eqref{A1} and \eqref{beta} there exist constants $C_1, C_2>0$ such that
$$F(r-1)-F\left(\frac{r}{K^2\kappa}\right) \geq C_1 F\left(\frac{r}{K^2\kappa}\right)\geq C_2 \frac{1}{K^{2\beta} \kappa^\beta}F(r),$$
and therefore, choosing $\kappa$ large enough we get
$$\frac{1}{2} F(r-1)\geq F\left(\frac{r}{K^2\kappa}\right), \quad \forall\; r>2.$$
Thus 
$$\Psi(x, r)\gtrsim \frac{F(r)}{\sV(o, r)}\int_{B(x, \frac{r}{K\kappa})} V(y) d\mu\gtrsim \Phi_{\frac{1}{K\kappa}}(r).$$
This completes the proof.
\end{proof}

Now we state our next main result.
\begin{theorem}\label{T4.3}
Grant the hypothesis of Lemma~\ref{L2.4} and let $u$ be a non-negative solution of \eqref{E4.7}. Assume that for some $o\in K$ and all $\kappa\in (0, \nicefrac{1}{2})$
$$\lim_{r\to\infty} \, \max\left\{\frac{1}{\Phi_\kappa(r)},  \frac{\sV(o, r)}{F(r)\Phi^{\frac{1}{p-1}}_\kappa(r)}\right\}=0,$$
where $\Phi_\kappa$ is given by \eqref{E4.9}. Then we have $u=0$ in $\Omega$.
\end{theorem}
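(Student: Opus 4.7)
My plan is to adapt the proof of Theorem~\ref{T4.1} to the exterior setting, replacing the ball-infimum $\sM$ by an annular infimum suited to the compact obstacle $K$. Suppose for contradiction that $u \not\equiv 0$; the argument at the end of the proof of Theorem~\ref{T4.1} (combining \eqref{needed} with positivity of the heat kernel) forces $u > 0$ throughout $\Omega$. Fix $r_1$ so large that $K \subset B(o, r_1/4)$ and set $c_0 := \inf_{\partial B(o, r_1)} u > 0$. For $r \geq r_1$ define
\[
m(r) \; := \; \inf\bigl\{u(y) \,:\, r \leq d(o, y) \leq 10\, r\bigr\} \; > \; 0.
\]

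Two exterior analogues of Lemma~\ref{L4.2} then come from the hitting-time estimates of Section~\ref{S-Hitting}. Applying \eqref{E4.4} with the hitting time $\uuptau_{r_1}$ of $\overline{B(o, r_1)}$ together with the direct extension of Lemma~\ref{L2.6}(a) to that ball yields, for $x$ with $d(o, x) \geq 2 r_1$,
\[
u(x) \; \geq \; c_0 \, \Prob_x(\uuptau_{r_1} < \infty) \; \gtrsim \; c_0 \, \frac{F(d(o, x))}{\sV(o, d(o, x))},
\]
so $m(r) \gtrsim 1 \wedge F(r)/\sV(o, r)$. The doubling-type comparability $m(r) \simeq m(2r)$ (up to universal constants) then follows from Lemma~\ref{L2.6}(b) applied at scales $\rho \in [r/10, r]$ together with the fact that $\inf_{\partial B(o, \rho)} u \geq m(\rho/10)$.

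The core of the argument is the nonlinear integral inequality, obtained by applying \cite[Lemma~4.14]{GH08} to the super-solution. Fix $\kappa \in (0, 1/4)$. For $r \gg r_1$ and any $x$ with $d(o, x) \in [2r, 5r]$, the ball $B(x, \kappa r)$ sits inside $\Omega$ and inside the annulus $\{r \leq d(o, \cdot) \leq 10 r\}$, so
\[
u(x) \; \geq \; \int_{B(x, \kappa r)} V(y) \, g^{B(x, \kappa r)}(x, y) \, f(u(y)) \, d\mu(y) \; \geq \; f(m(r)) \int_{B(x, \kappa r)} V(y) \, g^{B(x, \kappa r)}(x, y) \, d\mu(y),
\]
using monotonicity of $f$. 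Rerunning the proof of Lemma~\ref{L4.3} with outer radius $\kappa r$ in place of $r-1$ (the Green-function lower bound of \cite[Theorem~3.12]{GH14} is valid on any ball) shows the last integral is $\gtrsim \Phi_{\kappa'}(r)$ for some $\kappa' \in (0, 1/2)$. Taking the infimum over $x \in [2r, 5r]$ and absorbing the scale mismatch via the doubling of $m$ gives
\[
m(r) \; \gtrsim \; f(m(r)) \, \Phi_{\kappa'}(r).
\]

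To close, using $\liminf_{t \to 0^+} f(t)/t^p > 0$ together with monotonicity and strict positivity of $f$ on $(0, \infty)$ one has $f(t) \gtrsim t^p$ on any bounded positive interval, which (in the regime where $m(r)$ stays in such an interval) yields $m(r)^{p-1} \lesssim 1/\Phi_{\kappa'}(r)$. Combined with the lower bound on $m$, this produces $\bigl(1 \wedge F(r)/\sV(o, r)\bigr)^{p-1} \lesssim 1/\Phi_{\kappa'}(r)$: if $F(r)/\sV(o, r) \geq 1$ along a subsequence the estimate forces $\Phi_\kappa(r) \lesssim 1$, contradicting the first term of the max in the hypothesis; otherwise one rearranges to $\sV(o, r)/[F(r)\,\Phi_{\kappa'}^{1/(p-1)}(r)] \gtrsim 1$, contradicting the second term. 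The main technical obstacles I foresee are (i) the careful scale-bookkeeping needed to replace $\inf_{B(x,\kappa r)} u$ by $m(r)$ with universal constants via $m(r) \simeq m(2r)$, and (ii) ruling out the alternative scenario where $m(r)$ is too large for $f(t) \gtrsim t^p$ to apply, which requires a separate iteration argument anchored at a fixed reference point in $\Omega$ where $u$ is finite.
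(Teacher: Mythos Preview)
Your overall architecture is right, and the integral inequality $m(r)\gtrsim f(m(r))\,\Phi_{\kappa'}(r)$ is exactly the engine the paper uses. But the gap you yourself flag as obstacle (ii) is not a technicality: it is the heart of the matter, and your proposal does not close it. You need $m(r)\to 0$ along a sequence in order to invoke $f(t)\gtrsim t^p$; without this, the inequality $m(r)\gtrsim f(m(r))\Phi_{\kappa'}(r)$ gives nothing (for instance if $m$ were bounded below by some $\delta>0$, you only learn $1\gtrsim f(\delta)\Phi_{\kappa'}(r)$, which is already the contradiction---but you have not shown you can avoid this regime, nor exploited it). The paper resolves this with a transience argument you are missing: assuming $\inf_{B^c(o,1)}u=\delta>0$, the integral inequality forces $u(x)\to\infty$ as $d(o,x)\to\infty$; then one picks a point $x$ with $\Prob_x(\uuptau_{B(o,1)}<\infty)<1$ (such a point exists precisely because the process is transient under the hypotheses of Lemma~\ref{L2.4}), and from the super-martingale estimate
\[
u(x)\;\geq\;\Exp_x[u(X_{\uuptau_{B_1}})\,\Ind_{\{\uuptau_{B_1}<\uptau_n\}}]\;+\;\Bigl[\min_{\partial B(o,n)}u\Bigr]\,\Prob_x(\uuptau_{B_1}>\uptau_n)
\]
one lets $n\to\infty$ and obtains $u(x)=\infty$, a contradiction. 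Thus $\inf u=0$, and one can extract $r_n\to\infty$ with $\sM(r_n)\to 0$. Your suggested ``iteration argument anchored at a fixed reference point'' does not obviously produce this; the transience is essential.

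Two smaller points. First, the dichotomy ``$u>0$ everywhere or $u\equiv 0$'' in the exterior domain is not the heat-kernel argument from Theorem~\ref{T4.1}; the paper instead shows $\{u=0\}$ is open (via the Green-function comparison and positivity of $V$) and closed, then uses connectedness of $\Omega$. Second, the paper works with $\sM(r)=\inf_{B(o,r)\cap B^c(o,1)}u$, which is automatically non-increasing; your annular $m(r)$ is not monotone, which is why you need the extra two-sided comparability $m(r)\simeq m(2r)$ and the attendant bookkeeping. The paper's choice makes the endgame cleaner: once $\sM(r_n)\to 0$ is known, one picks $x_n$ with $d(o,x_n)=r_n$ realising $\sM(r_n)$, applies the comparison on $B(x_n,r_n-1)$, and Lemma~\ref{L4.3} directly gives $\sM(r_n)\gtrsim\sM(r_n)^p\Phi_\kappa(r_n)$.
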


\begin{proof}
Suppose that $u(x)=0$ at some point $x\in\Omega$ and $B=B(x, 2r)$ is inside $\Omega$.
Then by comparison principle \cite[Lemma~4.14]{GH08} we have 
$$u(z)\geq \int_{B(z, r) } V(y) f(u(y)) g^{B(z, r)} (z, y) d\mu(y),$$
for a.e.~$z\in B(x, r)$. For fixed $r$, it then follows from the calculation of Lemma~\ref{L4.3} that
$$u(z)\gtrsim \int_{B(z, rK^{-1}) } V(y) f(u(y))  d\mu(y)\geq \int_{B(z, rK^{-1})\cap B(x, rK^{-1})} V(y) f(u(y))  d\mu(y).$$
Since $u$, $V$, $f$ are continuous, letting $z\to x$ and using Fatou's lemma, we obtain
$$0=u(x)\gtrsim \int_{B(x, rK^{-1}) } V(y) f(u(y))  d\mu(y).$$
Since $V>0$, the above holds only if $f(u)=0$ or equivalently, $u=0$ in $B(x, rK^{-1})$. Therefore, the set $\{u=0\}$ is both closed and open in $\Omega$.
Since $\Omega$ is connected it follows that $u=0$ everywhere in $\Omega$.

Now we assume that $u>0$ in $\Omega$ and we will derive a contradiction. Take a ball $B(o, l)$ such that $K\subset B(o, l)$ and without loss of generality we may assume 
$l=1/2$. Note that it suffices to show that $u=0$ in $B^c(o, 1)$. 
We claim that 
$$\inf_{B^c(o, 1)} u=0.$$
If not, let us assume that $\inf_{B^c(o, 1)} u=\delta>0$.
Then for any $x\in \Omega\setminus\cN$ with $r=d(o,x)>2$, we have
$$u(x)\geq f(\delta) \Exp_x\left[\int_0^{\uptau_{B(x, r-1)}} V(X_s) ds\right]= f(\delta)\Psi(x, r),$$
where $\Psi$ is given by \eqref{E4.8}. Using Lemma~\ref{L4.3} we get
$$u(x)\gtrsim f(\delta) \Phi_\kappa(r)\,. $$
Since $\Phi_\kappa(r)\to \infty$, as $r\to\infty$, the above implies
 that $\lim_{\D(o, x)\to \infty} u(x)=\infty$. On the other hand, the Hunt process is transient. Therefore, there exists 
$x\in B^c(o, 1)\setminus \cN$ such that $\Prob_{x}(\uuptau_{B(o, 1)}<\infty)<1$. Otherwise, the process would hit $B(0, 1)$ infinitely often and would also
exist for all time. But this would be contradicting to \cite[Theorem~3.5.2]{CF12} which says the transient process should go to infinity if exists for all time.
Pick a point $x\in B^c(o, 1)\setminus \cN$  with such property. Let $r>> 1$. By superharmonicity, we have, for this $x$,
$$u(x)\geq \Exp_x[u(X_{\uuptau_{B_1}\wedge\uptau_n})]\geq
\Exp_x[u(X_{\uuptau_{B_1}}) \Ind_{\{\uuptau_{B_1}<\uptau_n\}}] + \left[\min_{\partial B(o, n)} u\right]\, \Prob_x(\uuptau_{B_1}>\uptau_n),$$
where $B_1 = B(o,1)$.
Now let $n\to\infty$ to get a contradiction. Hence we get $\inf_{B^c(o, 1)} u=0$.

Let us now define
$$\sM(r)=\inf_{x\in B(o, r)\cap B^c(o, 1)} u(x).$$
It is easy to see that Lemma~\ref{L4.2} holds.
We choose a sequence $r_n\to\infty$ satisfying the following: there exists $x_n$ with $d(o,x_n)=r_n$ and $u(x_n)=\sM(r_n)\leq \frac{1}{2} \sM(r_{n-1})$.
This is possible to do since $\lim_{r\to\infty} \sM(r)=0$, as we have shown above. 
Now we proceed as in Theorem~\ref{T4.1}. For each $n$ we have
\begin{align*}
\sM(r_n)=u(x_n) &\geq \Exp_x\left[\int_0^{\uptau_{B(x_n, r_n-1)}} V(X_s) f(u(X_s)) ds\right]
\\
&\geq \Exp_x\left[\int_0^{\uptau_{B(x_n, r_n-1)}} V(X_s) f(\sM(2r_n-1)) ds\right]
\\
&\gtrsim \sM^p(2r_n-1) \int_{B(x_n, r_n-1)} V(y) g^{B(x_n, r_n-1)}(x_n, y) d\mu(y)
\\
&\gtrsim \sM^p(r_n-1/2) \Psi(x_n, r_n)
\\
&\gtrsim \sM^p(r_n)\Phi_\kappa(r_n),
\end{align*}
by \eqref{EL4.3A}. Now applying Lemma~\ref{L4.2} we arrive at
$$1\gtrsim \sM^{p-1}(r_n) \Phi_\kappa(r_n)\gtrsim \left[ 1\wedge \frac{F(r_n)}{\sV(o, r_n)}\right]^{p-1} \Phi_\kappa(r_n).$$
Since $\Phi_\kappa(r_n)\to\infty$, it is evident that $p>1$. It also follows that $\frac{F(r_n)}{\sV(o, r_n)}\to 0$ as $r_n\to\infty$. Hence we must have
$$1\lesssim \frac{\sV(o, r_n)}{F(r_n)\Phi_\kappa^{\frac{1}{p-1}}(r_n)}\to 0,$$
as $r_n\to \infty$. Thus we have a contradiction. 

Therefore, $u=0$ is the only possible solution.
\end{proof}

Below we compare Theorem~\ref{T4.3} with some existing works in literature. 
\begin{itemize}
\item Let us compare Theorem~\ref{T4.3} with \cite{GS17} where similar problem has been studied on Riemannian manifolds and $f(t)=t^p$. Suppose
$$\sV(o, r)\simeq r^\alpha, \quad G(x, y)\simeq (d(x, y))^{-\gamma},$$
for some positive $\alpha, \gamma$ and $r, d(x, y)>R$. Using Lemma~\ref{L2.1} we also have $F(r)\gtrsim r^{\alpha-\gamma}$. Also suppose
for some $m>\gamma-\alpha$ it holds that $V(x)\geq c d(o, x)^m$. In view of Theorem~\ref{T4.3} let us compute $\Phi_\kappa$. It follows that for any $\kappa\in (0, \nicefrac{1}{2})$
\begin{align*}
\Phi_\kappa(r)\gtrsim r^{-\gamma} \inf_{x\in \overline{B(o, r)}\setminus B(o, r/2)}\int_{B(x, \kappa r)} d(o, y)^m d\mu(y).
\end{align*}
Since $d(o, x)\geq \frac{r}{2}$ then $d(o, y)\simeq r^m$ in $B(x, \kappa r)$. Combining we have
$$\Phi_\kappa(r)\gtrsim r^{\alpha+m-\gamma}.$$
Since $m>\gamma-\alpha$, we get $\Phi_\kappa(r)\to\infty$ as $r\to\infty$. Again if we consider $1<p<\frac{\alpha+m}{\gamma}$ i.e., 
$\frac{1}{p-1}>\frac{\gamma}{\alpha+m-\gamma}$, then we also have
$$\lim_{r\to\infty} \frac{\sV(o, r)}{F(r)\Phi_\kappa^{\frac{1}{p-1}}(r)}=0.$$
Therefore, \eqref{E4.7} does not have any non-trivial non-negative super-solution with exponent $p$. It should be observed that the above critical
exponent $\frac{\alpha+m}{\gamma}$ matches with \cite{GS17}.

\item  \cite{MP99, MP04} consider problems  of type \eqref{E4.7} with $f(t)=t^p$ on $\Rd$ and come up with a set of conditions for non-existence of positive solutions.
In our setting, an analogous condition would be, for $p>1$,
\begin{equation}\label{E4.12}
\limsup_{r\to\infty} \frac{1}{F(r)^{\frac{p}{p-1}}}\int_{B(o, r)\setminus B(o, r/2)} V^{-\frac{1}{p-1}} d\mu(y)=\ell\in [0, \infty).
\end{equation}
Now suppose that $\ell=0$ and we also have for any $\kappa\in (0, \nicefrac{1}{2})$
$$\liminf_{r\to\infty}\inf_{x\in \overline{B(o, r)}\setminus B(o, r/2)}\frac{\mu\left((B(o, r)\setminus B(o, r/2))\cap B(x,  \kappa r)\right)}{ \sV(o, r) }>0.$$
Then we note that, for $A(r, x)= (B(o, r)\setminus B(o, r/2))\cap B(x, \kappa r)$, we have from \eqref{E4.12} 
\begin{align*}
\int_{B(x,  \kappa r)} V(y) d\mu(y)\geq \int_{A(x, r)} V(y) d\mu(y) & \geq \mu(A(x, r)) \left[ \fint_{A(x, r)} V(y)^{-\frac{1}{p-1}} d\mu(y)\right]^{-(p-1)}
\\
& \geq \mu(A(x, r))^p \left[ \int_{B(o, r)\setminus B(o, r/2)} V(y)^{-\frac{1}{p-1}} d\mu(y)\right]^{-(p-1)}
\end{align*}
Hence 
\begin{align*}
\Phi_\kappa(r)&\geq \frac{F(r)}{\sV(o, r)} \mu(A(x, r))^p \left[ \int_{B(o, r)\setminus B(o, r/2)} V(y)^{-\frac{1}{p-1}} d\mu(y)\right]^{-(p-1)}
\\
&\gtrsim \frac{F(r)}{\sV(o, r)} \sV(0, r)^p \ell(r)^{-1} F(r)^{-p}
\\
&= \ell(r)^{-1} \frac{\sV(o, r)^{p-1}}{F(r)^{p-1}},
\end{align*}
where $\ell(r)\to 0$, as $r\to\infty$. This of course, implies
$$\lim_{r\to\infty} \frac{\sV(o, r)}{F(r)\Phi_\kappa^{\frac{1}{p-1}}(r)}=0.$$
This gives us the required condition of Theorem~\ref{T4.3} and therefore, there can not be any non-trivial positive super-solution.
\end{itemize}

\begin{remark}
The probabilistic approach we have employed here can be used to address similar problems for system of equations. For example, Lane-Emden type of systems \cite{DF94, S95}. The proof
would be fairly routine and can be deduced from the proof of Theorem~\ref{T4.3}. For more details, we refer to  \cite{B18}, \cite[Theorem~5.4]{BL17}.
\end{remark}

\section{Local Faber-Krahn inequality}\label{Sec-FK}

In this section we prove a local Faber-Krahn inequality on local or non-local regular Dirichlet spaces. Let $(\cX,d)$ be a locally compact separable metric space, $\mu$ a locally finite Radon measure on $\cX$ with full support, and $(\cE,\cF)$ a regular Dirichlet form on $L^2(\cX,\mu)$. We assume that all metric balls in $(\cX,d)$ are relatively compact.
In addition to (VD) and (RVD), we assume that
\begin{align} \label{V(x,1)}
c \le \inf_{x \in \cX} \sV(x,1) \le \sup_{x \in \cX} \sV(x,1) \le C
\end{align}
for some positive constants $c,C$,
and that the mean exit time function is given by an increasing bijection $F:[0,\infty) \to [0,\infty)$ satisfying
\begin{align} \label{F beta}
F(t) \simeq t^{\beta}, \quad t>0,
\end{align}
where $\alpha_2 > \beta$. That is,
\begin{align} \label{mean exit time}
\Exp_x \tau_{B(x,r)} \simeq F(t) \simeq t^{\beta}, \quad x \in \cX, r>0.
\end{align}
Further, we assume that $(\cE,\cF)$ admits a heat kernel $p(t,x,y)$ that satisfies the upper bound
\begin{align} \label{eq:HK upper bound}
p(t,x,y) \le \frac{C_1}{\sV(x,\mathcal R(t))} H \left( \frac{F(d(x,y))}{t} \right),
\end{align}
where $\mathcal R$ is the inverse of $F$, and $H:\RR \to [0,\infty)$ is non-increasing and satisfies
\begin{align} \label{eq:H}
\sum_{k=1}^{\infty} k^{\beta}  \int_{c k^{\beta}}^{\infty}  t^{\alpha_2/\beta-2} H(t)dt < \infty
\end{align}
for some constant $c \sim 1$.

In particular, the upper bound \eqref{eq:HK upper bound} holds in the following special cases:
\begin{enumerate}
\item
the stable-like estimate
\begin{align} \label{eq:upper HK nonlocal alpha}
p(t,x,y) \le \frac{C_1}{t^{\alpha/\beta}} \left( 1+ \frac{d(x,y)}{c_2 t^{1/\beta}} \right)^{-(\alpha+\beta)},
\end{align}
on an $\alpha$-regular space with $F(t) = t^{\beta}$ and $\beta >1$,
\item
the sub-Gaussian estimate
\begin{align}\label{eq:upper HK local alpha}
p(t,x,y) \le \frac{C_1}{t^{\alpha/\beta}} \exp\left(-c \left(\frac{\D(x, y)^{\beta}}{t} \right)^{\frac{1}{\beta-1}}\right),
\end{align}
on an $\alpha$-regular space with $F(t) = t^{\beta}$ and $\beta >1$,
\item
the sub-Gaussian estimate
\begin{align}\label{eq:upper HK local}
p(t,x,y) \le \frac{C_1}{\sV(x,\mathcal R(t))} \exp \left( -\frac{t}{2} \Phi \left( c \frac{d(x,y)}{t} \right) \right),
\end{align}
where $\Phi$ is as in \eqref{Phi} and $F$ satisfies \eqref{F beta}.
\end{enumerate}

\begin{definition} \label{def:median exit time} 
We define the {\em median exit time} for the diffusion starting at point $o \in \Omega$ as
\begin{align*}
T_{\eta}(o) := \inf \left\{ t>0 : \Prob_{o}(\tau \le t) \ge \eta \right\}.
\end{align*}
\end{definition}
The implicit constant in Theorem \ref{thm:FK} depends on the choice of $\eta$. We make the arbitrary choice $\eta=1/2$ and drop the subscript $\eta$. 


\begin{theorem} \label{thm:FK}
Assume that (VD), (RVD), \eqref{V(x,1)}, \eqref{mean exit time}, and the heat kernel upper bound \eqref{eq:HK upper bound} hold. 
Let $\Omega \subset \cX$ be a bounded domain. 
Suppose $V:\cX \to \RR$ is bounded and Borel measurable.
If $u \in \cF^0(\Omega)$ is a non-trivial weak solution of 
\begin{align}
\begin{split}
 \cE(u,\phi) + \int_{\Omega} Vu \phi \, d\mu &= 0 ,\quad \forall \phi \in \cF^0(\Omega).
\end{split}
\end{align}
and if $o$ is a point in $\Omega \setminus \cN$ such that $|u(o)| \ge \frac{3}{4} \|u\|_{\infty}$ then there exists a ball $B \subset \cX$ of radius $\sR(T(o))$ such that 
$$ \| V^- \|_{L^{\frac{\alpha_1}{\alpha_1 - \alpha_2 + \beta},1}(\Omega \cap B)} \geq c,$$
where $c$ is a positive constant that depends only on $\alpha_1$, $\alpha_2$, $\beta$, on the constants in (VD), (RVD), \eqref{V(x,1)} and \eqref{F beta}, on the constants and the function $H$ in the heat kernel upper bound.
\end{theorem}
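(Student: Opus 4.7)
The strategy is to adapt the Feynman--Kac method of Section~\ref{Sec-GL} to extract a \emph{localized} Lorentz-norm bound on $V^-$, combining the median exit time with the heat kernel upper bound \eqref{eq:HK upper bound}. Set $T=T(o)$. From the Feynman--Kac representation (as in Section~\ref{eigensol}),
$$ \tfrac34\|u\|_\infty \leq |u(o)| \leq \|u\|_\infty\,\Exp_o\!\left[e^{\int_0^T V^-(X_s)\mathds{1}_{\{s<\tau\}}ds}\mathds{1}_{\{T<\tau\}}\right]. $$
Writing $e^z=1+(e^z-1)$ and using $\Prob_o(T<\tau)\leq \tfrac12$ (which is the defining property of $T(o)$), I would deduce
$$ \Exp_o\!\left[e^{\int_0^T V^-(X_s)\mathds{1}_{\{s<\tau\}}ds}\right]\;\geq\;\tfrac54. $$
The Khasminskii-type argument of Lemma~\ref{L3.3}, adapted to the killed additive functional $\int_0^\cdot V^-(X_s)\mathds{1}_{\{s<\tau\}}ds$ via the same strong Markov iteration, yields
$ \Exp_x[e^{\int_0^T V^-(X_s)\mathds{1}_{\{s<\tau\}}ds}] \leq (1-M)^{-1} $
with $M=\sup_x \Exp_x[\int_0^T V^-(X_s)\mathds{1}_{\{s<\tau\}}ds]$, forcing $M\geq 1/5$. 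Hence some $x_0\in\Omega\setminus\cN$ satisfies
$$ \int_\Omega V^-(y)\,G_T(x_0,y)\,d\mu(y) \;\geq\; c_0>0, $$
since $G^\Omega_T(x_0,\cdot)\leq G_T(x_0,\cdot)$.

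Next I would analyze $G_T(x_0,\cdot)$ in Lorentz norms. Let $B=B(x_0,\sR(T))$ and set $p=\alpha_1/(\alpha_1-\alpha_2+\beta)$, so $p'=\alpha_1/(\alpha_2-\beta)$. Using \eqref{eq:HK upper bound}, \eqref{mean exit time}, (VD), (RVD), and the normalization \eqref{V(x,1)}, I would show
$$ \|G_T(x_0,\cdot)\,\mathds{1}_B\|_{L^{p',\infty}(\cX)} \;\leq\; C, $$
uniformly in $T$ and $x_0$: the distribution function $\mu\{G_T(x_0,\cdot)>\lambda\}$ satisfies $\lambda\,\mu\{\cdot\}^{1/p'}\lesssim 1$ precisely when $p'=\alpha_1/(\alpha_2-\beta)$, because $\sV(x_0,r)\gtrsim r^{\alpha_2}$ at small scales and the Green function is controlled by $F(r)/\sV(x_0,r)$. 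The Lorentz--H\"older inequality then gives
$$ \int_{\Omega\cap B} V^-\,G_T(x_0,\cdot)\,d\mu \;\leq\; C\,\|V^-\|_{L^{p,1}(\Omega\cap B)}. $$

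For the tail, I would decompose $\cX\setminus B = \bigsqcup_{k\geq 1} A_k$ with $A_k=B(x_0,(k+1)\sR(T))\setminus B(x_0,k\sR(T))$. Performing the change of variables $u=F(k\sR(T))/s$ in \eqref{eq:HK upper bound}, together with (VD) and \eqref{F beta}, gives the pointwise bound
$$ G_T(x_0,y) \;\leq\; \frac{C\,T}{\sV(x_0,\sR(T))}\,k^{\beta-\alpha_2}\,I_k, \qquad y\in A_k, $$
with $I_k=\int_{ck^\beta}^{\infty} t^{\alpha_2/\beta-2}H(t)\,dt$. Covering $A_k$ by balls of radius $\sR(T)$ with bounded overlap, applying Lorentz--H\"older ball-by-ball, and using (VD)/(RVD) to compare the volumes $\sV(y,\sR(T))$ with $\sV(x_0,\sR(T))$, I would obtain
$$ \int_{\cX\setminus B} V^-\,G_T(x_0,\cdot)\,d\mu \;\leq\; C\,\eta\,\sum_{k\geq 1} k^{\beta}\,I_k, $$
where $\eta:=\sup_{y\in\cX}\|V^-\|_{L^{p,1}(\Omega\cap B(y,\sR(T)))}$. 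The choice $p=\alpha_1/(\alpha_1-\alpha_2+\beta)$ is exactly what makes the prefactor $T\,\sV(x_0,\sR(T))^{1/p'-1}$ dimensionless (i.e.\ of order $1$) after collecting powers of $\sR(T)$. The summability condition \eqref{eq:H} then makes the tail sum finite. Combining with the previous bound yields $c_0\leq C\eta$, so some ball of radius $\sR(T)$ realizes the desired lower bound on $\|V^-\|_{L^{p,1}(\Omega\cap B')}$.

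The main technical obstacle is the tail estimate: ensuring that after all the exponent bookkeeping from (VD), (RVD), \eqref{V(x,1)}, and the covering of $A_k$, the remaining factor multiplying $I_k$ is bounded by $k^{\beta}$ (not a larger power), so that \eqref{eq:H} closes the argument. This is exactly what pins down the Lorentz exponent $\alpha_1/(\alpha_1-\alpha_2+\beta)$ and where the particular interplay between $\alpha_1$ (RVD) and $\alpha_2$ (VD) enters. A secondary subtlety is that the volume lower bound $\sV(x,r)\gtrsim r^{\alpha_2}$ holds for $r\leq 1$ while $\sV(x,r)\gtrsim r^{\alpha_1}$ holds for $r\geq 1$, so the two regimes $\sR(T)\leq 1$ and $\sR(T)>1$ must be treated separately, with \eqref{V(x,1)} providing the transition.
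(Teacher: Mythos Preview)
Your proposal is essentially correct and follows the same overall route as the paper: Feynman--Kac at the near-maximizer $o$, use the definition of the median exit time $T(o)$ to peel off the survival probability, apply Khasminskii to force $\sup_x \Exp_x\!\int_0^{T(o)} V^-(X_s)\,ds \gtrsim 1$, and then bound this expectation by the Lorentz norm of $V^-$ via the heat kernel upper bound, using a near/far decomposition with \eqref{eq:H} controlling the tail.

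Two remarks on minor differences. First, where you split $e^z = 1 + (e^z-1)$ to deduce $\Exp_o[e^W] \geq 5/4$, the paper instead applies Cauchy--Schwarz to get $\Exp_o[e^{2W}] \geq 9/8$; both are fine. Second, you anticipate having to split into the regimes $\sR(T) \le 1$ and $\sR(T) > 1$. The paper avoids this: since $\Omega$ is bounded, Chebyshev gives $T(o) \leq 2\Exp_o\tau_\Omega \lesssim F(\mathrm{diam}\,\Omega)$, so only the regime $\sR(T) \lesssim 1$ (up to a constant depending on $\mathrm{diam}\,\Omega$) is needed. This is worth noting because your parenthetical justification of the Lorentz exponent (``$\sV(x_0,r) \gtrsim r^{\alpha_2}$ at small scales'') is slightly off: the volume \emph{lower} bound (via VD and \eqref{V(x,1)}) gives the pointwise Green estimate $G_T(x_0,y) \lesssim d(x_0,y)^{\beta-\alpha_2}$, but to get the weak-$L^{\alpha_1/(\alpha_2-\beta)}$ bound you then need the volume \emph{upper} bound $\sV(x_0,r) \lesssim r^{\alpha_1}$, which is (RVD). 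Both inequalities are used, and keeping them straight is what makes the exponent $\alpha_1/(\alpha_1-\alpha_2+\beta)$ emerge cleanly.
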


%
%

We consider the solution $u$ of \eqref{eq:equation} as a steady-state solution of the parabolic equation
$$ -\int_0^t \int u \frac{\partial}{\partial t} \phi \, d\mu \, ds + \int_0^t \cE(u,\phi) ds + \int_0^t \int V u \phi \, d\mu \, ds = 0.$$
Let $(X_t)_{t \ge 0}$ be the diffusion process on $\Omega$ with absorption at the boundary of $\Omega$, associated with the Dirichlet form $(\cE,\cF^0(\Omega))$.
By the Feynman-Kac formula, 
$$ u(x) =  \Exp_{x} \left(u(X_t) \exp \left( -\int_0^t V(X_s) ds \right)  1_{\left\{t<\tau\right\}} \right), \quad \forall x \in \Omega \setminus \cN, t > 0.$$
Let $o$ be a point in $\Omega \setminus \cN$ such that $|u(o)| \ge \frac{3}{4} \|u\|_{\infty}$.  We may assume that $u(o) > 0$ (otherwise consider $-u$ and note that $-u$ also solves \eqref{eq:equation}). Let $\tau=\tau_{\Omega}$ be the first exit time from $\Omega$, i.e.~the time the process gets absorbed at the boundary. Then,
\begin{align*}
 u(o)
&= \Exp_{o} \left(u(X_t) 1_{\left\{t<\tau\right\}}\exp \left( -\int_0^t V(X_s) ds \right)  \right) \\
 &\leq \|u\|_{\infty}  \Exp_{o}\left( 1_{\left\{t < \tau \right\}}  \exp \left( \int_0^t V^-(X_s) ds \right) \right).
 \end{align*}
Since $u(o) \ge \frac{3}{4} \|u\|_{\infty}$, this simplifies to
\begin{align} \label{eq:fund ineq}
  \Exp_{o}\left(   1_{\left\{\tau > t\right\}}  \exp \left( \int_0^t V^-(X_s) ds \right)\right) \geq \frac{3}{4}.
\end{align}

\begin{lemma}[Khasminskii's lemma] \label{lem:Khasminskii} 
Let $(X_s)_{s\ge0}$ be a Markov process on the metric measure space $(\cX,d,\mu)$ and let $V:\cX \to [0,\infty)$ be a Borel measurable function. If, for some $t>0$ and $c < 1$,
$$ \sup_{x \in X}{ \Exp_{x} \left[ \int_0^t{V(X_s)} ds \right] }= c,$$
then
$$ \sup_{x \in X}{ \Exp_{x} \left[ \exp \left( \int_{0}^{t}{V(X_s) ds} \right) \right]} \leq \frac{1}{1-c}.$$
\end{lemma}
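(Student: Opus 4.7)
The proof is a standard power series argument, originally due to Khasminskii. Here is the plan.

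The starting point is to expand the exponential as a power series, $\exp(A_t) = \sum_{k=0}^{\infty} A_t^k/k!$, where $A_t \df \int_0^t V(X_s)\,ds$. Since $V\ge 0$, every term is non-negative, so by monotone convergence it suffices to show the termwise bound
\[
\sup_{x \in \cX} \Exp_x[A_t^k] \,\le\, k!\, c^k,
\]
for every $k \ge 0$; summing over $k$ then yields the desired $(1-c)^{-1}$ via the geometric series.

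The idea is to rewrite $A_t^k$ as an integral over the ordered simplex and peel off the innermost variable using the Markov property. Explicitly,
\[
A_t^k \,=\, k!\!\!\int_{0\le s_1\le \cdots \le s_k \le t}\!\! V(X_{s_1})\cdots V(X_{s_k})\, ds_1\cdots ds_k ,
\]
so that after taking expectations I would integrate with respect to the variable $s_k$ first. By the Markov property at time $s_{k-1}$ and the tower property,
\[
\Exp_x\!\left[\int_{s_{k-1}}^t\! V(X_{s_k})\,ds_k \;\Big|\; \mathcal F_{s_{k-1}}\right] \,=\, \Exp_{X_{s_{k-1}}}\!\left[\int_0^{t-s_{k-1}}\! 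V(X_u)\,du\right] \,=\, \Exp_{X_{s_{k-1}}}[A_{t-s_{k-1}}].
\]
Because $V\ge 0$, the process $s \mapsto A_s$ is non-decreasing, so $\Exp_y[A_{t-s_{k-1}}] \le \Exp_y[A_t] \le c$ for every $y$, and the innermost integral is bounded by $c$ pathwise in the remaining variables.

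Iterating this peeling exactly $k$ times I obtain
\[
\int_{0\le s_1\le \cdots \le s_k \le t}\Exp_x[V(X_{s_1})\cdots V(X_{s_k})]\,ds_1\cdots ds_k \,\le\, c^k,
\]
so $\Exp_x[A_t^k] \le k!\, c^k$, uniformly in $x$. Summing gives $\sup_x \Exp_x[\exp(A_t)] \le \sum_{k\ge 0} c^k = (1-c)^{-1}$, completing the proof. There is no substantial obstacle; the only points that require care are justifying the Fubini-type rewriting of $A_t^k$ (routine since all integrands are non-negative) and the monotonicity of $A_s$ in $s$ used to replace $A_{t-s_{k-1}}$ by $A_t$ in the sup bound, which relies crucially on the positivity hypothesis $V\ge 0$.
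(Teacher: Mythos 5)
Your proof is correct and is exactly the standard power-series/simplex argument that the paper defers to via its citations to \cite{feyn, sim} (and which the authors themselves reproduce in spirit in the proof of Lemma~\ref{L3.3}, peeling off the innermost time variable with the Markov property). No gaps: the Tonelli and monotonicity justifications you flag are indeed the only points needing care, and both go through since $V\ge 0$.
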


The proof of Khasminskii's lemma is given in \cite{feyn, sim} for the case $\cX = \RR^n$ and extends verbatim to the general case.

\begin{lemma}\label{lem:est} Let $f: \cX \rightarrow [0,\infty)$. For every $0 < d \lesssim 1$,
$$ \sup_{x \in X} \int_{X}{ f(y) \int_{0}^{F(d)}{p_s(x,y) ds } \, dy} \lesssim \sup_{x \in X} \| f\|_{L^{\frac{\alpha_1}{\alpha_1 - \alpha_2 + \beta},1}(B(x,d))}.$$
\end{lemma}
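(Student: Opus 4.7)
The plan is to split $\int_{\cX} f(y) G(y)\, d\mu(y)$ with $G(y) := \int_0^{F(d)} p_s(x,y)\, ds$ into a near-diagonal piece on $B(x,d)$ and a far-diagonal piece on the annuli $A_k := B(x, 2^k d) \setminus B(x, 2^{k-1} d)$, $k \geq 1$, after first obtaining a sharp pointwise bound for $G$. The near-diagonal piece is handled by a Lorentz--H\"older duality, while the far-diagonal piece is handled by a Vitali-type covering argument combined with the tail condition \eqref{eq:H}.

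For the pointwise estimate I would substitute $s = F(r)$ and then $u = (d(x,y)/r)^{\beta}$ in the integral obtained from the heat-kernel upper bound \eqref{eq:HK upper bound}; volume doubling applied to $\sV(x, d(x,y)\, u^{-1/\beta})$ then yields
\[
G(y) \;\lesssim\; \frac{F(d(x,y))}{\sV(x, d(x,y))}\, \int_{\max\{1,\, (d(x,y)/d)^{\beta}\}}^{\infty} u^{\alpha_2/\beta - 2}\, H(u)\, du.
\]
The $u$-integral is finite when $d(x,y) \leq d$ (integrability near $0$ uses $\alpha_2 > \beta$; at infinity it follows from \eqref{eq:H}) and it is dominated by $I_k := \int_{c\, 2^{k\beta}}^{\infty} u^{\alpha_2/\beta - 2}\, H(u)\, du$ for $y \in A_k$, $k \geq 1$.

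For the near-diagonal piece apply Lorentz--H\"older with exponents $(p, p')$ where $p' = \alpha_1/(\alpha_2 - \beta)$. Using the pointwise bound together with the two-sided estimates $c\, r^{\alpha_2} \leq \sV(x, r) \leq C\, r^{\alpha_1}$, valid for $r \leq 1$ from (VD), (RVD) and \eqref{V(x,1)}, a distribution-function computation yields
\[
\mu\{y \in B(x,d) : G(y) > t\} \;\lesssim\; \min\!\bigl(d^{\alpha_1},\, t^{-p'}\bigr),
\]
the two branches matching at $t \simeq d^{\beta - \alpha_2}$ thanks to the algebraic identity $\alpha_1/p' = \alpha_2 - \beta$. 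Consequently $\|G\|_{L^{p',\infty}(B(x,d))} \lesssim 1$, so that $\int_{B(x,d)} f\, G\, d\mu \lesssim \|f\|_{L^{p,1}(B(x,d))}$.

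For the far-diagonal piece, pick for each $k \geq 1$ a Vitali-type cover $\{B(x_j^k, d)\}_{j \in J_k}$ of $A_k$ with centers in $A_k$ and bounded overlap; the lower bound $\mu(B(x_j^k, d/5)) \gtrsim d^{\alpha_2}$ (from \eqref{V(x,1)} and (VD)) gives $|J_k| \lesssim \sV(x, 2^k d)\, d^{-\alpha_2}$. Using $G \lesssim F(2^k d)/\sV(x, 2^k d) \cdot I_k$ on $A_k$, the trivial H\"older estimate $\int_{B_j^k} f\, d\mu \leq \mu(B_j^k)^{1/p'} \|f\|_{L^{p,1}(B_j^k)}$, and discrete H\"older in $j$, one obtains
\[
\int_{A_k} f\, G\, d\mu \;\lesssim\; 2^{k\beta}\, d^{\beta - \alpha_2/p}\, I_k \cdot \sup_{x' \in \cX} \|f\|_{L^{p,1}(B(x',d))}.
\]
The prefactor $d^{\beta - \alpha_2/p}$ is bounded by $1$ since $\beta - \alpha_2/p = (\alpha_2 - \beta)(\alpha_2 - \alpha_1)/\alpha_1 \geq 0$ and $d \leq 1$, while the series $\sum_k 2^{k\beta} I_k$ converges by rewriting it as an integral of $H$ dominated by \eqref{eq:H}. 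Summing the near and far pieces gives the claim. The main obstacle is the precise algebraic matching of exponents among the Lorentz norm, the (VD)/(RVD) volume scaling, and the specific value $p = \alpha_1/(\alpha_1 - \alpha_2 + \beta)$; in particular, one must check that \eqref{eq:H} is just strong enough to absorb the $2^{k\beta}$ prefactor (not merely the softer $k^{\beta}$ appearing in \eqref{eq:H} itself).
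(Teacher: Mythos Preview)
Your approach mirrors the paper's: both derive the pointwise bound $G(y)\lesssim d(x,y)^{\beta-\alpha_2}\int_{(d(x,y)/d)^\beta}^\infty u^{\alpha_2/\beta-2}H(u)\,du$ from \eqref{eq:HK upper bound}, treat the near part by Lorentz--H\"older duality against $d(x,\cdot)^{\beta-\alpha_2}$, and handle the far part by a Vitali-type cover with balls of radius $d$, summed via the tail condition \eqref{eq:H}. The one structural difference is scale: the paper groups the far balls by \emph{linear} distance $\sim kd$, so that there are $N_k\lesssim k^{\alpha_2}$ balls each contributing $\lesssim k^{\beta-\alpha_2}\int_{ck^\beta}^\infty$, and the total $\sum_k k^\beta\int_{ck^\beta}^\infty$ is literally \eqref{eq:H}; your dyadic annuli produce instead $\sum_j 2^{j\beta}\int_{c\,2^{j\beta}}^\infty$. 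Your flagged concern is unfounded, since this dyadic sum is \emph{weaker} than \eqref{eq:H}: with $\Psi(s):=\int_{cs}^\infty u^{\alpha_2/\beta-2}H(u)\,du$ non-increasing, blocking the linear sum dyadically gives $\sum_{k=2^{j-1}}^{2^j-1}k^\beta\Psi(k^\beta)\gtrsim 2^{j(1+\beta)}\Psi(2^{j\beta})\geq 2^{j\beta}\Psi(2^{j\beta})$. One small slip in your pointwise estimate: the lower limit should be $(d(x,y)/d)^\beta$ (not $\max\{1,\cdot\}$), and the VD comparison should be with $\sV(x,d)$ rather than $\sV(x,d(x,y))$ so that the bound $\sV(x,d(x,y)u^{-1/\beta})^{-1}\lesssim u^{\alpha_2/\beta}$ holds throughout the range of integration; the ``integrability near $0$'' you mention is then precisely $\int_0^1 u^{\alpha_2/\beta-2}\,du<\infty$, which indeed uses $\alpha_2>\beta$.
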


\begin{proof} 
We fix $x \in X$ and let $B_1 = B(x,d)$. We choose countably many balls $B_i=B(x_i,d)$, $i \ge 2$, in such a way that the balls $B(x_i,d/5)$ are disjoint, and every point in $\cX$ is contained in at most $N$ of the balls $B_i$, where $N$ is finite and depends only on the volume doubling constant.
Then
$$ \int_X{ f(y) \int_{0}^{F(d)}{p_s(x,y) ds} \, dy} \leq \sum_{i=1}^{\infty} \int_{B_i}{ f(y) \int_{0}^{F(d)}{p_s(x,y) ds} \, dy }.$$

We apply the upper heat kernel bound \eqref{eq:HK upper bound}, the substitution $t = \frac{F(d(x,y))}{s}$, $ds = - t^{-2} F(d(x,y))dt$, the volume doubling property \eqref{1.2}, and \eqref{F beta} to obtain, for each of the balls $B_i$,
\begin{align*}
 \int_{B_i} f(y) \int_{0}^{F(d)} p_s(x,y) ds \, dy 
& \lesssim  \int_{B_i} f(y) \int_{0}^{F(d)} \frac{1}{\sV(x,\sR(s))} H\left(\frac{F(d(x,y))}{s}\right) ds \, dy  \\
& =  \int_{B_i} f(y) \int_{\frac{F(d(x,y))}{F(d)}}^{\infty} \frac{1}{\sV(x,\sR(F(d(x,y))/t))} H(t) t^{-2} F(d(x,y)) dt \, dy  \\
& \lesssim \frac{1}{\sV(x,d)}  \int_{B_i} f(y) \int_{\frac{F(d(x,y))}{F(d)}}^{\infty} \left(\frac{\sR(F(d))}{\sR(F(d(x,y))/t)} \right)^{\alpha_2} H(t) t^{-2} F(d(x,y)) dt \, dy \\
& \lesssim  \frac{1}{\sV(x,d)} \int_{B_i} f(y) \int_{\frac{F(d(x,y))}{F(d)}}^{\infty} \left(\frac{F(d)}{F(d(x,y))/t} \right)^{\alpha_2/\beta} H(t) t^{-2} F(d(x,y)) dt \, dy \\
& =  \frac{F(d)^{\alpha_2/\beta}}{\sV(x,d)} \int_{B_i} f(y) F(d(x,y))^{-\alpha_2/\beta + 1} \int_{\frac{F(d(x,y))}{F(d)}}^{\infty}  t^{\alpha_2/\beta-2} H(t)  dt \, dy \\
& \lesssim  \int_{B_i} f(y) F(d(x,y))^{-\alpha_2/\beta + 1} \int_{\frac{F(d(x,y))}{F(d)}}^{\infty}  t^{\alpha_2/\beta-2} H(t)  dt \, dy.
\end{align*}
In the last line we used the assumption $F(t) \simeq t^{\beta}$ and the fact that $\sV(x,d)^{-1} \lesssim d^{-\alpha_2} \sV(x,1)^{-1}$ which follows from $d \lesssim 1$, (VD) and (RVD), and we also used the lower bound \eqref{V(x,1)}.

If $F(d(x,y)) \ge F(d)$ then, by \eqref{eq:H},
\begin{align*}
\int_{\frac{F(d(x,y))}{F(d)}}^{\infty}  t^{\alpha_2/\beta-2} H(t)  dt
& \lesssim  \int_1^{\infty}  t^{\alpha_2/\beta-2} H(t)  dt \lesssim 1.
\end{align*}
If $F(d(x,y)) < F(d)$ then
\begin{align*}
\int_{\frac{F(d(x,y))}{F(d)}}^{\infty}  t^{\alpha_2/\beta-2} H(t)  dt
& \lesssim \int_{\frac{F(d(x,y))}{F(d)}}^1  t^{\alpha_2/\beta-2} H(t)  dt
+ \int_1^{\infty}  t^{\alpha_2/\beta-2} H(t)  dt \\
& \lesssim H(0) \int_{\frac{F(d(x,y))}{F(d)}}^1  t^{\alpha_2/\beta-2}  dt
+ 1 \\
& \lesssim H(0) \frac{\beta}{\alpha_2-\beta} \left[ 1 - \left( \frac{ F(d(x,y))}{F(d)} \right)^{\frac{\alpha_2}{\beta}-1} \right] + 1 \lesssim 1.
\end{align*}
Hence, in any case,
\begin{align} \label{eq:int f Psi}
 \int_{B_i} f(y) \int_{0}^{F(d)} p_s(x,y) ds \, dy 
& \lesssim 
\int_{B_i} f(y)  F(d(x,y))^{-\alpha_2/\beta + 1} dy.
\end{align}

For each ball $B_i$ that is at most distance $d/2$ away from $x$, we estimate the right hand side as follows. The number of such $B_i$'s can be bounded in terms of the volume doubling constant only.
We apply the Lorentz-H\"older inequality due to O'Neil \cite[Theorem 3.5]{oneil},
 $$ \| fg \|_{L^1} \lesssim \|f\|_{L^{\frac{\alpha_1}{
\alpha_1 - \alpha_2 + \beta}, 1}} \| g\|_{L^{\frac{\alpha_1}{\alpha_2-\beta}, \infty}},$$
to the right hand side of \eqref{eq:int f Psi} to get
\begin{align*}
 \int_{B_i} f(y) \int_{0}^{F(d)} p_s(x,y) ds \, dy 
& \lesssim  \| f \|_{L^{\frac{\alpha_1}{\alpha_1 - \alpha_2 + \beta},1}(B_i)} \left\| F(d(x,y))^{-\alpha_2/\beta +1}  \right\|_{L^{\frac{\alpha_1}{\alpha_2-\beta},\infty}(B_i)}.
\end{align*}
We claim that $\left\| F(d(x,y))^{-\alpha_2/\beta +1}  \right\|_{L^{\frac{\alpha_1}{\alpha_2-\beta},\infty}(B_i)} < C$.
Indeed, 
by \eqref{F beta} we have 
\begin{align*}
 F(d(x,y))^{-\alpha_2/\beta +1} 
&  \lesssim d(x,y)^{-\alpha_2 + \beta}.
\end{align*}
We then estimate using \eqref{1.2},
\begin{align*}
 \left\| d(x,y)^{-\alpha_2 + \beta}  \right\|^{\frac{\alpha_1}{\alpha_2-\beta}}_{L^{\frac{\alpha_1}{\alpha_2-\beta},\infty}(B(x,3d/2))} 
& = \sup_{t>0} t^{\frac{\alpha_1}{\alpha_2-\beta}} \mu \left( y: d(x,y) < 3d/2, d(x,y)^{-\alpha_2 + \beta} > t \right) \\
& = \sup_{t>0} t^{\frac{\alpha_1}{\alpha_2-\beta}} \mu \left( y: d(x,y) < 3d/2 \wedge  t^{\frac{1}{-\alpha_2 + \beta}} \right) \\
& = \sup_{t>0} t^{\frac{\alpha_1}{\alpha_2-\beta}} \mu\left(B\left(x,3d/2 \wedge  t^{\frac{1}{-\alpha_2 + \beta}} \right)\right) \\
& \lesssim \sup_{t>0} t^{\frac{\alpha_1}{\alpha_2-\beta}}  \left( 1 \wedge \left( \frac{2}{3d} t^{\frac{1}{-\alpha_2 + \beta}} \right)^{\alpha_1}  \right) \mu(B(x,3d/2)) \\
& = \sup_{t>0} \left( t^{\frac{\alpha_1}{\alpha_2-\beta}}  \wedge \left( \frac{2}{3d} \right)^{\alpha_1}  \right) \sV(x,3d/2) \\
& \lesssim ( 1 \wedge d^{-\alpha_1}) \sV(x,3d/2).
\end{align*}
The right hand side is $\lesssim 1$ because $d \lesssim 1$ and by (VD) and (RVD).

Now we have proved that
\begin{align*}
 \int_{B_i} f(y) \int_{0}^{F(d)} p_s(x,y) ds \, dy 
& \lesssim  \| f \|_{L^{\frac{\alpha_1}{\alpha_1 - \alpha_2 + \beta},1}(B_i)}
\end{align*}
for any of the balls $B_i$ that is at most distance $d/2$ away from $x$.

It  remains to consider those balls $B_i$ whose distance to $x$ is at least $d/2$. 
By the volume doubling property, the number $N_k$ of balls $B_j$ with distance to $x$ being $\sim kd$ is at most $\simeq k^{\alpha_2}$. Indeed, 
$$ N_k \min_j \sV(x_j,d/5) \leq \mu\left(\cup_j B(x_j,d/5) \right) \leq \sV(x,(k+1)d) \lesssim k^{\alpha_2} \min_j \sV(x_j,d/5).$$

Consider a ball $B_i$ that is at distance $\sim k d$ from $x$. For any $y \in B_i$, by \eqref{F beta},
$$ F(d(x,y))^{-\alpha_2/\beta +1} \lesssim F(d)^{-\alpha_2/\beta+1} k^{-\alpha_2 + \beta}. $$
We refine the estimate \eqref{eq:int f Psi} as follows.
\begin{align}
 \int_{B_i} f(y) \int_{0}^{F(d)} p_s(x,y) ds \, dy 
& \lesssim  \int_{B_i} f(y) F(d(x,y))^{-\alpha_2/\beta + 1} \int_{\frac{F(d(x,y))}{F(d)}}^{\infty}  t^{\alpha_2/\beta-2} H(t)  dt \, dy \\
& \lesssim F(d)^{-\alpha_2/\beta+1} k^{-\alpha_2 + \beta} \int_{B_i} f(y) \int_{c k^{\beta}}^{\infty}  t^{\alpha_2/\beta-2} H(t)  dt \, dy,
\end{align}
for some constant $c \sim 1$.

Applying the H\"older inequality, \eqref{F beta}, (RVD) and \eqref{V(x,1)}, we get
\begin{align*} 
 \int_{B_i} f(y) \int_{0}^{F(d)} p_s(x,y) ds \, dy 
& \lesssim F(d)^{-\alpha_2/\beta+1} k^{-\alpha_2 + \beta} \int_{c k^{\beta}}^{\infty}  t^{\alpha_2/\beta-2} H(t) dt \, \| f \|_{L^{\frac{\alpha_1}{\alpha_1 - \alpha_2 + \beta}}(B_i)} \left\| 1 \right\|_{L^{\frac{\alpha_1}{\alpha_2-\beta}}(B_i)} \\
& \lesssim  d^{-\alpha_2 + \beta} k^{-\alpha_2 + \beta} \int_{c k^{\beta}}^{\infty}  t^{\alpha_2/\beta-2} H(t)  dt \, \| f \|_{L^{\frac{\alpha_1}{\alpha_1 - \alpha_2+\beta}}(B_i)} \left(\frac{V(x,d)}{V(x,1)}\right)^{\frac{\alpha_2-\beta}{\alpha_1}} V(x,1)^{\frac{\alpha_2-\beta}{\alpha_1}} \\
& \lesssim  k^{-\alpha_2 + \beta} \int_{c k^{\beta}}^{\infty}  t^{\alpha_2/\beta-2} H(t)  dt \, \| f \|_{L^{\frac{\alpha_1}{\alpha_1 - \alpha_2+\beta}}(B_i)}.
\end{align*}

Altogether, we obtain
\begin{align*}
 \sup_{x \in X} \int_{X}{ f(y) \int_{0}^{F(d)}{p_s(x,y) ds} \, dy}
 &\lesssim \left( 1 + \sum_{k=1}^{\infty}  k^{\beta}  \int_{c k^{\beta}}^{\infty}  t^{\alpha_2/\beta-2} H(t)dt \right)  \sup_{x \in X} \| f \|_{L^{\frac{\alpha_1}{\alpha_1 - \alpha_2 + \beta},1}(B(x,d))}.
\end{align*}
By assumption \eqref{eq:H}, the sum is bounded.
\end{proof}

Given a point $o$ where the solution attains its maximum, the estimate \eqref{eq:fund ineq} together with the Cauchy-Schwarz inequality imply that
\begin{align*}
 \frac{3}{4} \leq  \Exp_{o}\left(   1_{\left\{\tau > t\right\}}  \exp \left( \int_0^t V^-(X_s) ds \right) \right) \leq \mathbb{P}\left( \tau > t \right)^{1/2} \left(\mathbb{E}_{o} \exp \left( \int_0^t 2V^-(X_s) ds \right)\right)^{1/2}
\end{align*}
for all $t > 0$.
We choose $t = T(o)$ to be the median exit time $T(o)$ that we introduced in Definition \ref{def:median exit time}. Then
$$ \mathbb{P}_{o}\left( \tau > T(o) \right)^{1/2} \le \frac{1}{\sqrt{2}},$$
and therefore
$$   \mathbb{E}_{o} \left( \exp \left( \int_0^{T(o)} 2V^-(X_s) ds \right) \right) \geq \frac{9}{8}.$$
Khasminskii's Lemma (Lemma \ref{lem:Khasminskii}) implies
$$  \sup_{x \in X}{ \Exp_{x} \left( \int_0^{T(o)}{ 2V^-(X_s)} ds \right) } \geq \frac{1}{9}.$$
We would like to apply Lemma \ref{lem:est} with $f = 2V^+$ and $d = \sR(T(o))$. To this end we need to verify that $d \lesssim 1$. An easy application of Chebyshev's inequality gives $T(o) \le 2 \Exp_{o}\tau_{\Omega}$ and since $\Exp_{o}\tau_{\Omega} \le \Exp_{o}\tau_{B(o,\mbox{diam}(\Omega))} \lesssim F(\mbox{diam}(\Omega))$ by \eqref{F beta}, we indeed have $d \lesssim 1$ provided that $\Omega$ is bounded.
Now Lemma \ref{lem:est} gives
$$ \frac{1}{9} \leq \sup_{x \in X} \Exp_{x} \left( \int_0^{T(o)}{2V^-(X_s)} ds \right)   \lesssim   \sup_{x \in X} \left\| V^-\right\|_{L^{\frac{\alpha_1}{\alpha_1 - \alpha_2 + \beta},1}(B(x,\sR(T(o))))}.
$$
Recall that $V \equiv 0 $ outside $\Omega$. The proof of Theorem \ref{thm:FK} is now complete.

\subsection*{Acknowledgements}
This research of Anup Biswas was supported in part by an INSPIRE faculty fellowship and DST-SERB grant EMR/2016/004810. This work was supported by a grant from the Simons Foundation/SFARI (585415, JL).



\end{document}